\documentclass[12pt]{amsart}

\usepackage[english]{babel}
\usepackage{mathrsfs,amssymb}
\usepackage{mathtools}
\usepackage[colorlinks, citecolor = blue]{hyperref}

\usepackage[shortlabels]{enumitem}
\setlist[itemize]{leftmargin=25pt}
\setlist[enumerate]{leftmargin=25pt}

\newtheorem{theorem}{Theorem}[section]
\newtheorem{lemma}[theorem]{Lemma}
\newtheorem{prop}[theorem]{Proposition}
\newtheorem{cor}[theorem]{Corollary}

\theoremstyle{definition}
\newtheorem{definition}[theorem]{Definition}
\newtheorem{que}[theorem]{Question}

\newtheorem{pr}[theorem]{Problem}

\theoremstyle{remark}
\newtheorem{remark}[theorem]{Remark}

\newtheorem{example}[theorem]{Example}

\numberwithin{equation}{section}
\usepackage[colorinlistoftodos,prependcaption,textsize=small]{todonotes}

\DeclareMathOperator*{\esssup}{ess\,sup}
\DeclareMathOperator*{\essinf}{ess\,inf}






\let \la=\lambda
\let \e=\varepsilon
\let \d=\delta

\let \a=\alpha
\let \f=\varphi
\let \b=\beta

\let \ga=\gamma
\let \D=\Delta

\allowdisplaybreaks

\begin{document}
\title[A boundedness criterion for the maximal operator]
{A boundedness criterion for the maximal operator on variable Lebesgue spaces}

\author[A.K. Lerner]{Andrei K. Lerner}
\address[A.K. Lerner]{Department of Mathematics,
Bar-Ilan University, 5290002 Ramat Gan, Israel}
\email{lernera@math.biu.ac.il}

\thanks{The author was supported by ISF grant no. 1035/21.}

\begin{abstract}
We obtain a necessary and sufficient condition on an exponent $p(\cdot)$ for which the Hardy--Littlewood maximal
operator is bounded on the variable $L^{p(\cdot)}$ space. It is formulated in terms of the Muckenhoupt-type condition $A_{p(\cdot)}$,
responsible for a local control of $p(\cdot)$, and a certain integral condition on $p(\cdot)$, responsible for the behaviour of $p(\cdot)$ at infinity.
Our approach is based on an earlier characterization established by L. Diening and on non-increasing rearrangements.
\end{abstract}

\keywords{Maximal operator, variable Lebesgue spaces, non-increasing rearrangements.}

\subjclass[2020]{42B25, 42B35}

\maketitle

\section{Introduction}
Let $p(\cdot):{\mathbb R}^n\to [1,\infty)$ be a measurable function. Denote
by $L^{p(\cdot)}$ the space of real-valued measurable functions $f$ on ${\mathbb R}^n$ such
that
$$\|f\|_{L^{p(\cdot)}}:=\inf\left\{\la>0:\int_{{\mathbb R}^n}\left(\frac{|f(x)|}{\la}\right)^{p(x)}dx\le 1\right\}<\infty.$$

Define the Hardy--Littlewood maximal operator $M$ by
$$Mf(x):=\sup_{Q\ni x}\frac{1}{|Q|}\int_Q|f(y)|dy,$$
where the supremum is taken over all cubes $Q\subset {\mathbb R}^n$ containing the point $x$.

Let ${\mathcal P}$ denote the class of all exponents $p(\cdot)$ for which $M$ is bounded on $L^{p(\cdot)}$.
This paper is concerned with the following problem.

\begin{pr}\label{mq}
Find a constructive characterization of the class ${\mathcal P}$.
\end{pr}

In other words, we seek for a statement of the form ``$p(\cdot)\in {\mathcal P}$ if and only if condition $A$ holds", where $A$ is a checkable condition formulated in terms of $p(\cdot)$.

Observe that in 2005, Diening \cite{D1} obtained a remarkable characterization of the class ${\mathcal P}$ but it was not constructive. This result will play an important role
in our paper, and we will return to it a bit later. Meanwhile we provide a brief account of the main results related to Problem \ref{mq}. A more detailed exposition can be found in the monographs \cite{CUF, DHHR11}.

\subsection{Background} First we formulate sufficient conditions on $p(\cdot)$ in terms of the so-called log-H\"older continuity.

\begin{definition}\label{logh}
We say that a function $r(\cdot):{\mathbb R}^n\to {\mathbb R}$ is locally log-H\"older continuous,
and write $r(\cdot)\in LH_0$, if there exists a constant $C>0$ such that
$$|r(x)-r(y)|\le \frac{C}{-\log|x-y|}\quad(|x-y|<1/2,\,x,y\in {\mathbb R}^n).$$
We say that $r(\cdot)$ is log-H\"older continuous at infinity, and write $r(\cdot)\in LH_{\infty}$, if there exist constants $r_{\infty}$ and $C>0$ such that
$$|r(x)-r_{\infty}|\le \frac{C}{\log({\rm{e}}+|x|)}\quad(x\in {\mathbb R}^n).$$
\end{definition}

Given an exponent $p(\cdot)$, denote
$$p_-:=\displaystyle \operatornamewithlimits{ess\,
inf}_{x\in\mathbb{R}^n} p(x)\quad\text{and}\quad p_+:=\displaystyle
\operatornamewithlimits{ess\, sup}_{x\in\mathbb{R}^n} p(x).$$

The following theorem goes back to Diening \cite{D,D2} and Cruz-Uribe--Fiorenza--Neugebauer \cite{CUFN}.
It can be found in the monographs  \cite[Th. 3.16]{CUF} and \cite[Th. 4.3.8]{DHHR11}, where a detailed history is provided.

\begin{theorem}\label{sufc}
If $p_->1$ and $\frac{1}{p(\cdot)}\in LH_0\cap LH_{\infty}$, then $p(\cdot)\in {\mathcal P}$.
\end{theorem}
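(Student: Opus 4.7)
\emph{Plan.} The argument is modular: by the scaling of the Luxemburg norm, it suffices to prove that $\rho(f):=\int_{\R^n}|f(x)|^{p(x)}\dd x\le 1$ implies $\rho(Mf)\le C$. One splits $f=f_1+f_2$ with $f_1:=f\chi_{\{|f|>1\}}$ and $f_2:=f\chi_{\{|f|\le 1\}}$ and treats the two pieces using $LH_0$ and $LH_\infty$ respectively; the condition $p_->1$ enters through the classical $L^{p_-}$ boundedness of~$M$.

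\medskip

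\emph{Large part via $LH_0$.} Fix $x\in Q$ and set $p_Q:=\essinf_{y\in Q} p(y)$. Since $|f_1|\ge 1$ on its support and $p(y)/p_Q\ge 1$, one has $|f_1(y)|\le |f_1(y)|^{p(y)/p_Q}$; Jensen's inequality (valid as $p_Q\ge 1$) then gives
\[
\Bigl(\frac{1}{|Q|}\int_Q|f_1|\dd y\Bigr)^{\!p_Q}\le \frac{1}{|Q|}\int_Q|f_1(y)|^{p(y)}\dd y\le M\!\bigl(|f_1|^{p(\cdot)}\bigr)(x).
\]
The key is to upgrade the exponent from $p_Q$ to $p(x)$: using $p(x)=p_Q+(p(x)-p_Q)$ and the trivial bound $\frac{1}{|Q|}\int_Q|f_1|\dd y\le|Q|^{-1/p_-}$ (consequence of $\|f_1\|_{L^{p_-}}^{p_-}\le\rho(f)\le 1$), the extra factor $\bigl(\frac{1}{|Q|}\int_Q|f_1|\dd y\bigr)^{p(x)-p_Q}$ is at most $|Q|^{-(p(x)-p_Q)/p_-}$, uniformly bounded thanks to $LH_0$ when $|Q|\le 1$ and trivially so when $|Q|>1$ (as $p(x)\ge p_Q$). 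One thus arrives at a pointwise estimate controlling $(Mf_1(x))^{p(x)}$ by a maximal function of $|f_1|^{p(\cdot)}$; to integrate it, the argument is refined by working with $g:=|f_1|^{p(\cdot)/p_-}\in L^{p_-}$ (using $Mf_1\le Mg$ on the support of $f_1$ and the $L^{p_-}$ boundedness of $M$), producing the desired bound $\rho(Mf_1)\le C$.

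\medskip

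\emph{Small part via $LH_\infty$.} For any $0\le h\le 1$, the hypothesis $1/p(\cdot)\in LH_\infty$ yields the elementary inequality
\[
h(y)^{p(y)}\le C\, h(y)^{p_\infty}+\frac{C}{(\ee+|y|)^{np_-}},
\]
proved by distinguishing $h(y)\ge(\ee+|y|)^{-n}$ (where $|p(y)-p_\infty|\log(1/h(y))\lesssim 1$) from its complement (where $h^{p(y)}\le h^{p_-}\le(\ee+|y|)^{-np_-}$). Applied to $h=f_2$ this gives $\|f_2\|_{L^{p_\infty}}^{p_\infty}\le C$; since $p_\infty\ge p_->1$, the classical boundedness of $M$ on $L^{p_\infty}$ yields $\|Mf_2\|_{L^{p_\infty}}^{p_\infty}\le C$, and a second application of the inequality to $h=Mf_2\le 1$ produces $\rho(Mf_2)\le C$.

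\medskip

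\emph{Main obstacle.} The delicate step is the large-values estimate: the natural pointwise bound controls $(Mf_1)^{p(x)}$ by a maximal function of a function in $L^1$, and since $M$ is unbounded on $L^{1}$, one cannot integrate directly. The remedy of replacing $|f_1|$ by the $L^{p_-}$ function $g=|f_1|^{p(\cdot)/p_-}$ requires precisely the $LH_0$ control $|Q|^{|p(x)-p_Q|}\lesssim 1$ on small cubes that the log-H\"older hypothesis provides; without such regularity, the exchange between $p(x)$ and $p_Q$ would cost an unbounded factor.
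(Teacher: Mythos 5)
The paper itself does not prove Theorem \ref{sufc}; it quotes it as a known result with references to \cite[Th.~3.16]{CUF} and \cite[Th.~4.3.8]{DHHR11}, so there is no in-paper argument to compare against. Your proposal is, in substance, the standard proof from those sources: the modular reduction, the splitting $f=f_1+f_2$ at height $1$, the Jensen/log-H\"older estimate for the large part, and the two-sided comparison of $h^{p(y)}$ with $h^{p_\infty}$ for the small part. For exponents with $p_+<\infty$ the sketch does close, with two caveats worth making explicit. First, the decisive pointwise bound is $\big(Mf_1(x)\big)^{p(x)}\le C\big(M(|f_1|^{p(\cdot)/p_-})(x)\big)^{p_-}$, and the clean way to get it is to rerun your Jensen argument with the normalized exponent $q(\cdot):=p(\cdot)/p_-$ (so $q_-=1$) rather than with $p(\cdot)$; the phrase ``$Mf_1\le Mg$ together with $L^{p_-}$-boundedness'' does not by itself produce an integrable majorant of $(Mf_1)^{p(x)}$, since $(Mg)^{p(x)}\ge (Mg)^{p_-}$ where $Mg>1$. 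Second, your $LH_\infty$ inequality is needed in both directions --- $h^{p(y)}\lesssim h^{p_\infty}+(\ee+|y|)^{-np_-}$ \emph{and} $h^{p_\infty}\lesssim h^{p(y)}+(\ee+|y|)^{-np_-}$ --- because the step $\|f_2\|_{L^{p_\infty}}^{p_\infty}\le C$ uses the direction opposite to the one you state; both follow from the same case analysis, so this is only an imprecision.

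The one genuine gap concerns unbounded exponents. The theorem is deliberately phrased with $\frac{1}{p(\cdot)}\in LH_0\cap LH_\infty$ precisely so as to include $p_+=\infty$ (see Remark \ref{rem}), and several of your steps silently assume $p_+<\infty$: converting $\frac1{p(\cdot)}\in LH_\infty$ into $|p(y)-p_\infty|\le C/\log(\ee+|y|)$ costs a factor $p(y)p_\infty$; the exponent-upgrading bound $|Q|^{-(p(x)-p_Q)/p_-}\lesssim 1$ uses $LH_0$ of $p(\cdot)$ rather than of $1/p(\cdot)$; and the modular $\int|f|^{p(x)}\dd x$ must be redefined on $\{p=\infty\}$. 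If you add the hypothesis $p_+<\infty$ (which is all the rest of the paper ever uses), your argument is correct; for the full statement one needs the additional reduction to the set where $p(\cdot)$ is finite, as carried out in \cite{CUF,DHHR11}.
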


\begin{remark}\label{rem}
If $p_->0$ and $p_+<\infty$, then the condition $\frac{1}{p(\cdot)}\in LH_0\cap LH_{\infty}$ is equivalent to ${p(\cdot)}\in LH_0\cap LH_{\infty}$. Thus, the sense
of the above formulation with $\frac{1}{p(\cdot)}$ is that it allows to include the case of unbounded~$p(\cdot)$.
\end{remark}

Examples constructed in the works \cite{CUFN} and \cite{PR} show that the conditions on $p(\cdot)$ in Theorem \ref{sufc} cannot be improved in the pointwise sense.
Nevertheless, these conditions are not necessary. For example, it was shown by the author \cite{L1} that there exist discontinuous (both locally and at infinity)
exponents $p(\cdot)\in {\mathcal P}$.
In \cite{Ne1}, Nekvinda found an improvement of Theorem \ref{sufc}, where the $LH_{\infty}$ condition is replaced by a weaker integral condition defined as follows.

\begin{definition}\label{nekc}
We say that $p(\cdot)$ satisfies the $N_{\infty}$ condition if there exist $p_{\infty}>0$ and $0<c<1$ such that
$$\int_{{\mathbb R}^n}c^{\frac{1}{|p(x)-p_{\infty}|}}dx<\infty.$$
\end{definition}

Here, and throughout the paper, we adopt a convention that $\frac{1}{0}=\infty$ and $c^{\infty}=0$ for $0<c<1$.

It is easy to show that $LH_{\infty}\Rightarrow N_{\infty}$, but the converse is not true (see, e.g., \cite[Prop. 4.9]{CUF}). The following result is due to Nekvinda \cite{Ne1},
see also \cite[Th. 4.7]{CUF} and \cite[Rem. 4.3.10]{DHHR11}.

\begin{theorem}\label{sufcn}
If $p_->1$ and $\frac{1}{p(\cdot)}\in LH_0\cap N_{\infty}$, then $p(\cdot)\in {\mathcal P}$.
\end{theorem}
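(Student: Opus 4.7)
The plan is to follow the blueprint of the proof of Theorem~\ref{sufc} (as presented in \cite[Ch.~3]{CUF} or \cite[Ch.~4]{DHHR11}), replacing the pointwise log-H\"older decay at infinity by the weaker integral decay furnished by $N_\infty$. By the standard scaling argument, it suffices to establish the modular estimate $\int_{\R^n}(Mf(x))^{p(x)}\,dx\le C$ whenever $\int_{\R^n}|f(x)|^{p(x)}\,dx\le 1$. The heart of the matter is a pointwise Jensen-type inequality on cube averages: for every cube $Q$ and every $x\in Q$,
\[
\Bigl(\frac1{|Q|}\int_Q|f(y)|\,dy\Bigr)^{p(x)}
\le C\,\frac1{|Q|}\int_Q|f(y)|^{p(y)}\,dy+C\,\frac1{|Q|}\int_Q\varphi(y)\,dy+C\,\varphi(x),
\]
for a fixed $\varphi\in L^1(\R^n)$ depending only on $p(\cdot)$. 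Once this is proved, taking the supremum over $Q\ni x$, raising to the appropriate power, and applying the $L^{p_-}$-boundedness of $M$ (valid because $p_->1$) to $|f|^{p(\cdot)/p_-}$ yields the desired modular bound.

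The function $\varphi$ would be constructed in two regimes. On short scales $|Q|\le 1$, only the $LH_0$ hypothesis on $1/p(\cdot)$ is needed: it gives $|Q|^{1/p(x)-1/p(y)}\le C$ for $y\in Q$, after which standard H\"older/Jensen manipulations produce a contribution $\varphi_0$ with pointwise polynomial decay, hence integrable. On long scales $|Q|>1$, the $LH_\infty$ assumption used in Theorem~\ref{sufc} is unavailable, and Nekvinda's idea enters: decompose the integration variable $y\in Q$ into the set where $|p(y)-p_\infty|<\varepsilon$, on which convexity exchanges $p(y)$ with $p_\infty$ essentially for free, and its complement, on which a suitable Young inequality---applied separately to the parts of $|f|$ above and below $1$, using the convention $c^{\infty}=0$---isolates an error pointwise bounded by the Nekvinda integrand $c^{1/|p(y)-p_\infty|}$. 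This integrand is integrable on $\R^n$ by Definition~\ref{nekc}, so it can be used as the long-scale contribution $\varphi_\infty$ to $\varphi$.

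The main obstacle is the long-scale estimate: carefully splitting $|f|=|f|\chi_{\{|f|\ge 1\}}+|f|\chi_{\{|f|<1\}}$ and arranging the Young inequality so that the resulting error coincides with the Nekvinda integrand. In the $LH_\infty$ proof the analogous step is essentially forced by the pointwise bound $|p(y)-p_\infty|\lesssim 1/\log(e+|y|)$, whereas under $N_\infty$ the decay is only an integral statement and must be injected into the inequality at the correct moment, so that after summation (via $M$) the Nekvinda integrand appears on the right rather than its maximal function, which need not be in $L^1$. Once the pointwise Jensen-type bound is in hand, the global estimate closes exactly as in the log-H\"older case.
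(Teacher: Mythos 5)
First, a point of reference: the paper does not prove Theorem \ref{sufcn} at all --- it is quoted from Nekvinda \cite{Ne1} (see also \cite[Th. 4.7]{CUF}), so your proposal can only be compared with the standard literature proof, not with an argument in this paper.

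Your plan has a genuine gap at precisely the point you flag but do not resolve: the long-scale estimate. Your master inequality puts the error function $\varphi$ under a cube average, $\langle\varphi\rangle_Q$, and taking $\sup_{Q\ni x}$ then unavoidably produces $M\varphi(x)$. In the $LH_\infty$ proof this is harmless because there $\varphi(y)=(e+|y|)^{-m}$ with $m>n$, so $M\varphi(x)\lesssim (e+|x|)^{-n}$ and the final raising to a power $s>1$ restores integrability. Under $N_\infty$ the only available error is the Nekvinda integrand $c^{1/|p(y)-p_\infty|}$, an $L^1$ function with no decay structure, so $(M\varphi)^s$ need not be integrable for any $s$. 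You acknowledge this, but the framework you propose --- a single cube-wise Jensen-type inequality summed through $M$ --- cannot deliver ``the integrand rather than its maximal function'': any exchange of $p(y)$ with $p_\infty$ inside the average $\langle|f|\rangle_Q$ (in particular your split of $y\in Q$ according to $|p(y)-p_\infty|\gtrless\e$) leaves $\langle c^{1/|p(\cdot)-p_\infty|}\rangle_Q$ on the right, which is exactly the forbidden term. The idea that actually closes the argument is to abandon the cube-wise estimate for the small part $f_2:=f\chi_{\{|f|<1\}}$ and transfer the whole modular bound to the constant exponent: since $0\le Mf_2\le 1$, the elementary inequality $t^{q}\le C\,t^{q_\infty}+c^{1/|q-q_\infty|}$ for $0\le t\le 1$, applied pointwise in $x$ alone, gives $\int (Mf_2)^{p(x)}dx\le C\int (Mf_2)^{p_\infty}dx+\int c^{1/|p(x)-p_\infty|}dx$; one then invokes the classical boundedness of $M$ on $L^{p_\infty}$ (note $p_\infty\ge p_->1$, since the set $\{|p-p_\infty|\ge\e\}$ has finite measure for every $\e$) and converts $\int|f_2|^{p_\infty}$ back to $\int|f_2|^{p(x)}$ by the same inequality. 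The Nekvinda integrand is thus never averaged over cubes and never hit by $M$. Only the large part $f_1=f\chi_{\{|f|\ge1\}}$ and the short scales require $LH_0$ (compare the treatment of $f_1$ in the proof of Theorem \ref{char1}). A secondary issue: the theorem allows $p_+=\infty$ --- this is the whole point of phrasing it via $1/p(\cdot)$, see Remark \ref{rem} --- whereas your reduction to a modular estimate and your Jensen manipulations with the exponent $p(x)$ implicitly assume $p_+<\infty$; the unbounded case must be run through $1/p(\cdot)$.
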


If $p(\cdot)$ is bounded, then, similarly to Remark \ref{rem}, the assumption $\frac{1}{p(\cdot)}\in LH_0\cap N_{\infty}$ can be written in the form $p(\cdot)\in LH_0\cap N_{\infty}$.

From now on assume that $p_->1$ and $p_+<\infty$. In this case the $LH_0$ condition in Theorem \ref{sufcn} can be replaced by a natural necessary condition of Muckenhoupt-type.
Given a $p(\cdot)$, define the conjugate exponent $p'(\cdot)$ by
$$p'(x):=\frac{p(x)}{p(x)-1}.$$

\begin{definition} We say that $p(\cdot)$ satisfies the $A_{p(\cdot)}$ condition if
$$\sup_{Q}\frac{\|\chi_Q\|_{L^{p(\cdot)}}\|\chi_Q\|_{L^{p'(\cdot)}}}{|Q|}<\infty,$$
where the supremum is taken over all cubes $Q\subset {\mathbb R}^n$.
\end{definition}

There are several different notations for this condition. For example, in \cite{CUF} it is denoted by $K_0$, and in \cite{DHHR11} it is denoted by ${\mathcal A}_{{\rm{loc}}}$.
Observe that for $L^{p(\cdot)}$ replaced by the weighted $L^p(w)$ space the $A_{p(\cdot)}$ condition is exactly the classical $A_p$ condition of Muchenhoupt (see, e.g., \cite[p.~143]{CUF}).

By duality between the $L^{p(\cdot)}$ and $L^{p'(\cdot)}$ spaces, the $A_{p(\cdot)}$ condition can be written in the equivalent form: there exists $C>0$ such that for every cube $Q$ and every locally integrable function $f$,
$$\langle |f|\rangle_Q\|\chi_Q\|_{L^{p(\cdot)}}\le C\|f\chi_Q\|_{L^{p(\cdot)}},$$
where we use a notation $\langle f\rangle_Q:=\frac{1}{|Q|}\int_Qf$. From this, using the fact that $\langle |f|\rangle_Q\chi_Q\le M(f\chi_Q)$, we obtain that the boundedness of $M$ on $L^{p(\cdot)}$ trivially implies the
$A_{p(\cdot)}$ condition. However, in contrast to the weighted $L^p(w)$ spaces, the $A_{p(\cdot)}$ condition is not sufficient for the boundedness of $M$ on $L^{p(\cdot)}$. The corresponding examples can be found in
\cite[Ex. 4.51]{CUF}, \cite[Th. 5.3.4]{DHHR11}, and \cite{K2}.

Although the $A_{p(\cdot)}$ condition is not sufficient, it plays a quite important role in further developments. In particular, as mentioned above, the $LH_0$ condition in Theorem \ref{sufcn} can be replaced by $A_{p(\cdot)}$, namely, we have the following result.

\begin{theorem}\label{sufconcr}
If $1<p_-\le p_+<\infty$ and $p(\cdot)\in A_{p(\cdot)}\cap N_{\infty}$, then $p(\cdot)\in {\mathcal P}$.
\end{theorem}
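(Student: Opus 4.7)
The plan is to invoke the characterization of $\mathcal{P}$ obtained by Diening in \cite{D1}. That result states, essentially, that $p(\cdot)\in\mathcal{P}$ is equivalent to the existence of $C>0$ such that for every family $\{Q_j\}$ of pairwise disjoint cubes the averaging operator
$$T_{\{Q_j\}} f := \sum_j \langle |f|\rangle_{Q_j}\,\chi_{Q_j}$$
satisfies $\|T_{\{Q_j\}} f\|_{L^{p(\cdot)}} \le C\|f\|_{L^{p(\cdot)}}$. The task therefore reduces to verifying this averaging estimate under the hypotheses $A_{p(\cdot)}$ and $N_\infty$.

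I would proceed by the standard modular approach: normalize $\|f\|_{L^{p(\cdot)}}\le 1$ and show that $\int_{\mathbb{R}^n}(T_{\{Q_j\}}f(x))^{p(x)}\,dx$ is bounded by an absolute constant. The $A_{p(\cdot)}$ condition, in the dual form recalled just before the theorem, gives $\langle|f|\rangle_{Q_j}\|\chi_{Q_j}\|_{L^{p(\cdot)}}\le C$, so one has uniform control on the averages. To pass to a constant exponent, I would use a Nekvinda-type pointwise inequality of the shape
$$t^{p(x)}\le C\,t^{p_\infty}+c^{1/|p(x)-p_\infty|}\qquad(\text{$t$ in a suitable range}),$$
with $c$ the parameter from the $N_\infty$ condition. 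Applied with $t=T_{\{Q_j\}}f(x)$, this splits the modular into a constant-exponent part and an error term majorized by $\int_{\mathbb{R}^n}c^{1/|p(x)-p_\infty|}\,dx$, which is finite by $N_\infty$.

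The constant-exponent part $\int(T_{\{Q_j\}}f)^{p_\infty}\,dx$ is then bounded because $T_{\{Q_j\}}$ is (the restriction of) a conditional expectation and hence contractive on the ordinary Lebesgue space $L^{p_\infty}(\mathbb{R}^n)$; it remains only to verify that $f$ itself has a controlled modular with respect to the constant exponent $p_\infty$, which follows from a second application of the same pointwise comparison together with $\|f\|_{L^{p(\cdot)}}\le 1$.

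The step I expect to be the main obstacle is that $T_{\{Q_j\}}f(x)$ is not uniformly bounded, so the Nekvinda-type comparison, being most natural for bounded $t$, has to be coupled with a cutoff and a separate treatment of the region where $T_{\{Q_j\}}f$ is large. This is where the non-increasing rearrangement tool announced in the abstract should enter: passing to the rearrangement of $f$ reduces the problem to a monotone, one-dimensional situation in which the bounded and unbounded parts separate cleanly, and the tail of $T_{\{Q_j\}}f$ can be weighed directly against the distribution function of $|p(x)-p_\infty|$ via the $N_\infty$ integrability.
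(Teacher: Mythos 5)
The paper does not actually prove Theorem \ref{sufconcr}; it quotes it from \cite[Th.~4.52]{CUF}. The relevant template, however, is carried out in the proof of Theorem \ref{char1}, and measured against it your outline has one genuine gap: the part of $f$ where $|f|>1$. Your reduction via Diening's Theorem \ref{Di}, the pointwise comparison $t^{p(x)}\le C\,t^{p_\infty}+c^{1/|p(x)-p_\infty|}$ (valid for $0\le t\le 1$ with $C=c^{-1/p_-}$), and Jensen's inequality for the constant exponent $p_\infty$ do correctly dispose of $f_2:=f\chi_{\{|f|\le 1\}}$, for which $A_{\mathcal F}f_2\le 1$; there $A_{p(\cdot)}$ is not even needed. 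But for $f_1:=f\chi_{\{|f|>1\}}$ the normalization $\|f\|_{L^{p(\cdot)}}\le 1$ only yields $\int|f_1|^{p_-}\le 1$, not any control of $\int|f_1|^{p_\infty}$, so the Jensen step against $p_\infty$ gives nothing, and on $\{A_{\mathcal F}f_1>1\}$ the cutoff comparison is unavailable. Your proposed remedy --- rearrange $f$ and weigh the tail of $A_{\mathcal F}f$ against the $N_\infty$ integral --- cannot work even in principle: $f_1$ is supported on a set of measure at most $1$, where the behaviour of $p(\cdot)$ at infinity is irrelevant, so this part must be controlled by the local condition $A_{p(\cdot)}$, not by $N_\infty$. (Also, the rearrangements announced in the abstract are applied to $p\chi_Q$, not to $f$, and serve an entirely different purpose.)

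The missing ingredient is a substantive lemma, namely (\ref{tams}) from \cite[Rem.~3.3]{L2}: if $p(\cdot)\in A_{p(\cdot)}$, then $\|(Mf)\chi_E\|_{L^{p(\cdot)}}\le c_{p(\cdot),n}\,c(E)\|f\|_{L^{p(\cdot)}}$ for every set $E$ of finite measure. Applied with $E=\{Mf_1>1\}$, whose measure is bounded by the weak $(1,1)$ inequality because $\|f_1\|_{L^1}\le\int|f_1|^{p(x)}dx\le 1$, and combined with the elementary bound on $\{Mf_1\le 1\}$ via the $L^{p_-}$ boundedness of $M$, it gives $\|Mf_1\|_{L^{p(\cdot)}}\le C$ and hence the bound for $A_{\mathcal F}f_1\le Mf_1$; this is exactly the estimate (\ref{wws}) in the proof of Theorem \ref{char1}. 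Your single use of $A_{p(\cdot)}$ --- the uniform bound $\langle|f|\rangle_{Q_j}\|\chi_{Q_j}\|_{L^{p(\cdot)}}\le C$ --- is far weaker: such per-cube bounds do not sum inside a variable-exponent modular (that summation is precisely what Diening's Theorem \ref{thd} provides, but only once $p(\cdot)\in\mathcal P$ is already known). With (\ref{tams}) supplied for the large part, the rest of your outline closes and essentially reproduces the standard proof; alternatively, Theorem \ref{sufconcr} is a corollary of Theorem \ref{mr} together with Proposition \ref{nekimp}.
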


In the present form this result can be found in \cite[Th. 4.52]{CUF}. Closely related results in this direction were obtained in \cite{K1} and \cite{L2}.

By the above observations, $LH_0\cap N_{\infty}\Rightarrow {\mathcal P}\Rightarrow A_{p(\cdot)}$. This shows that the assumptions in Theorem \ref{sufconcr} are not stronger than the ones in Theorem \ref{sufcn}. In fact,
they are weaker, the corresponding example can be found in \cite[Ex. 4.59]{CUF}.

The only drawback in Theorem \ref{sufconcr} is a control of $p(\cdot)$ at infinity, expressed in the condition $N_{\infty}$. The examples showing that this condition is not necessary for
$p(\cdot)\in {\mathcal P}$ can be found in the works of Nekvinda \cite{Ne2, Ne3, Ne4}, see also \cite[Ex. 4.13]{CUF}. Observe that the works \cite{Ne3,Ne4} provide rather delicate sufficient conditions on $p(\cdot)$ at infinity. However, they are still not necessary. We will discuss them below in more detail in Section \ref{ss63}.

Turn now to Diening's characterization \cite{D1} mentioned above. Given a family of pairwise disjoint cubes ${\mathcal F}$, denote
$$A_{{\mathcal F}}f:=\sum_{Q\in {\mathcal F}}\langle f\rangle_Q\chi_Q.$$

\begin{theorem}\label{Di} Assume that $1<p_-\le p_+<\infty$. Then $p(\cdot)\in {\mathcal P}$ if and only if there exists a constant $C>0$ such that for every family of pairwise disjoint cubes ${\mathcal F}$,
$$\|A_{{\mathcal F}}f\|_{L^{p(\cdot)}}\le C\|f\|_{L^p(\cdot)}.$$
\end{theorem}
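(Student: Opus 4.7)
The forward direction is immediate: for any disjoint family $\mc{F}$ and any $x\in\R^n$, either $x$ lies in some unique $Q\in\mc{F}$, in which case $|A_{\mc{F}}f(x)|=|\langle f\rangle_Q|\le \langle|f|\rangle_Q\le Mf(x)$, or $A_{\mc{F}}f(x)=0$. Thus $|A_{\mc{F}}f|\le Mf$ pointwise, and boundedness of $M$ on $L^{p(\cdot)}$ gives $\|A_{\mc{F}}f\|_{L^{p(\cdot)}}\le \|Mf\|_{L^{p(\cdot)}}\le C\|f\|_{L^{p(\cdot)}}$ with constant independent of $\mc{F}$.

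For the converse, assume the uniform bound $\|A_{\mc{F}}f\|_{L^{p(\cdot)}}\le C_0\|f\|_{L^{p(\cdot)}}$ for every disjoint family $\mc{F}$. The plan has three stages. First, reduce to a single dyadic grid: by the standard $3^n$ shifted dyadic grids trick one has $Mf\le c_n\sum_{t=1}^{3^n}M^{\mc{D}_t}f$, and the hypothesis transfers to each grid since any dyadic disjoint family is a disjoint family in the sense of the assumption. Second, for $f\ge 0$ and a fixed dyadic grid $\mc{D}$, perform a Calder\'on--Zygmund stopping-cube construction: let $\Omega_k:=\{M^{\mc{D}}f>2^k\}$, and let $\mc{F}_k$ be the disjoint family of maximal dyadic cubes making up $\Omega_k$; on each $Q\in\mc{F}_k$ one has $2^k<\langle f\rangle_Q\le 2^n\cdot 2^k$. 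Third, the layer-cake principle yields the pointwise sparse domination
\begin{equation*}
M^{\mc{D}}f\le 2\sum_{k\in\Z}2^k\chi_{\Omega_k\setminus\Omega_{k+1}}\le 2\sum_{k\in\Z}\sum_{Q\in\mc{F}_k}\langle f\rangle_Q\chi_{Q\cap(\Omega_k\setminus\Omega_{k+1})}
\end{equation*}
of $M^{\mc{D}}f$ by averages over the principal family $\mc{S}:=\bigcup_k\mc{F}_k$.

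The main obstacle is converting this pointwise estimate into an $L^{p(\cdot)}$ bound using only the hypothesis on disjoint families. Splitting $\mc{S}$ by generations of principal cubes, $\mc{S}=\bigsqcup_{j\ge 0}\mc{S}^{(j)}$, produces disjoint sub-families with $\|A_{\mc{S}^{(j)}}f\|_{L^{p(\cdot)}}\le C_0\|f\|_{L^{p(\cdot)}}$ for each $j$, but the crude triangle inequality $\|M^{\mc{D}}f\|_{L^{p(\cdot)}}\le 2\sum_j\|A_{\mc{S}^{(j)}}f\|_{L^{p(\cdot)}}$ diverges. Overcoming this requires working at the level of the modular $\int(\cdot)^{p(x)}\dd x$ and exploiting both the exponential measure decay $|\bigcup\mc{S}^{(j)}|\le 2^{-j}|\bigcup\mc{S}^{(0)}|$ inherent to the principal-cube construction and the bounded growth of $\langle f\rangle_Q$ across consecutive generations, so as to extract a summable modular estimate. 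Carrying out this modular self-improvement, in the spirit of Diening's original $L^{p(\cdot)}$ argument, is where I expect the bulk of the technical work to lie.
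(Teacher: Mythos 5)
Your forward direction is fine and is exactly what the paper itself observes: $A_{\mathcal F}f\le Mf$ pointwise, so the uniform bound on the averaging operators follows at once from $p(\cdot)\in{\mathcal P}$. Be aware, though, that the paper does not prove the converse at all — it cites Diening \cite{D1} and explicitly describes that direction as ``rather non-trivial with a quite complicated proof.'' So there is no in-paper argument to match yours against; the question is whether your sketch constitutes a proof, and it does not.

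The gap is the one you flag yourself, and it is not a technicality to be deferred: it is the entire content of the theorem. The reduction to dyadic grids and the pointwise sparse domination of $M^{\mathcal D}f$ by averages over the stopping cubes are standard and use nothing about $L^{p(\cdot)}$. But once you split the sparse family into generations ${\mathcal S}^{(j)}$ of pairwise disjoint cubes, the hypothesis only gives $\|A_{{\mathcal S}^{(j)}}f\|_{L^{p(\cdot)}}\le C_0\|f\|_{L^{p(\cdot)}}$ uniformly in $j$, with no decay, and the measure decay $|\bigcup{\mathcal S}^{(j)}|\lesssim 2^{-j}$ cannot be upgraded to norm (or modular) decay without additional structural information about $p(\cdot)$ — indeed, if it could, the same scheme would prove that the $A_{p(\cdot)}$ condition alone implies boundedness of $M$, which the paper points out is false. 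Diening's actual argument does not proceed by summing a sparse decomposition; its core consists of two deep facts about the class of exponents satisfying the averaging hypothesis — invariance under conjugation $p(\cdot)\mapsto p'(\cdot)$ and a self-improvement (``left-openness'') property — which are then combined, via duality, to control $M$. Your proposal contains no mechanism for producing either of these, and the phrase ``carrying out this modular self-improvement \dots is where I expect the bulk of the technical work to lie'' is an accurate admission that the converse direction has not been established.
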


This theorem was proved by Diening \cite{D1}. Since $A_{{\mathcal F}}f\le Mf$, one direction in this theorem is obvious. However, the converse direction is rather non-trivial with a quite complicated proof.
Since the operator $A_{\mathcal F}$ is self-adjoint, an immediate corollary of this theorem is that for $p(\cdot)$ with $1<p_-\le p_+<\infty$ we have that $p(\cdot)\in {\mathcal P}$ if and only if $p'(\cdot)\in {\mathcal P}$.

While Theorem \ref{Di} is a characterization of the class ${\mathcal P}$, it still cannot be regarded as a full solution to Problem \ref{mq}. It is rather an important reduction of the initial problem to a simpler operator but not a characterization in terms of $p(\cdot)$.

Before we turn to our contribution, note that the exposition above is definitely not complete. In particular, let us also mention the works \cite{CCF,CUDF,DHHMS, KK,KZ} dedicated to various aspects of the class ${\mathcal P}$.

\subsection{Our contribution}
In our main result we replace the $N_{\infty}$ condition in Theorem \ref{sufconcr} by a weaker condition, which we call the ${\mathcal U}_{\infty}$ condition, such that $A_{p(\cdot)}\cap {\mathcal U}_{\infty}$ is necessary
and sufficient for $p(\cdot)\in {\mathcal P}$. The key role in our new condition is played by the following function.

\begin{definition}\label{fpla} Assume that $1<p_-\le p_+<\infty$. Given $\la,\tau\in (0,1)$, define the function $F_{p,\la,\tau}$ on ${\mathbb R}^{2n}$ by
$$F_{p,\la,\tau}(x,y):=\tau^{\frac{p(y)}{p(x)-p(y)}}\la^{\frac{p'(x)}{p'(y)-p'(x)}}\chi_{\{(x,y):p(x)>p(y)\}}.$$
\end{definition}

For computational purposes it is perhaps more convenient to write this definition in terms of $p(\cdot)$ only:
$$F_{p,\la,\tau}(x,y)=\big(\tau^{p(y)}\la^{p(x)(p(y)-1)}\big)^{\frac{1}{p(x)-p(y)}}\chi_{\{(x,y):p(x)>p(y)\}}.$$
However, the use of the conjugate function clarifies the following important symmetry property:
$$F_{p,\la,\tau}(x,y)=F_{p',\tau,\la}(y,x).$$

We are now ready to define the ${\mathcal U}_{\infty}$ condition.

\begin{definition}\label{uinf}
Assume that $1<p_-\le p_+<\infty$.
We say that $p(\cdot)$ satisfies the ${\mathcal U}_{\infty}$ condition if there exist $C>0, r>1$ and $0<\ga<1/2$ such that
for all $0<\la,\tau<\ga$ and for any finite family of pairwise disjoint cubes ${\mathcal F}$,
\begin{equation}\label{uinfty}
\sum_{Q\in {\mathcal F}}\frac{1}{|Q|}\inf_{E\subset Q:|E|=\la|Q|\atop G\subset Q:|G|=\tau|Q|}\int_{Q\setminus G}\int_{Q\setminus E}F_{p,\la,\tau}(x,y)^{1/r}dxdy\le C.
\end{equation}
\end{definition}

By the symmetry property mentioned above and since the roles of $\tau$ and $\la$ in Definition \ref{uinf} can be reversed, we have that $p(\cdot)\in {\mathcal U}_{\infty}$
if and only if $p'(\cdot)\in {\mathcal U}_{\infty}$.

From the practical point of view, a typical way of verifying the ${\mathcal U}_{\infty}$ condition is to show that there exists an integrable function $\psi\ge 0$ such that for every cube $Q$,
$$\inf_{E\subset Q:|E|=\la|Q|\atop G\subset Q:|G|=\tau|Q|}\int_{Q\setminus G}\int_{Q\setminus E}F_{p,\la,\tau}(x,y)^{1/r}dxdy\le C|Q|\int_Q\psi(x)dx,$$
uniformly in $0<\la,\tau<\ga$.

Observe that the ${\mathcal U}_{\infty}$ condition follows from $N_{\infty}$ trivially in the most rough way, namely, it is enough to estimate $\tau^{p(y)}\la^{p(x)(p(y)-1)}$
from the definition of $F_{p,\la,\tau}$ by $\ga^{p_-}\ga^{p_-(p_--1)}=\ga^{p_-^2}$ and to replace the double integral over $(Q\setminus G)\times (Q\setminus E)$ by the integral over $Q\times Q$,
see Section~\ref{ss61} for more detail. In more general situations the fact that we allow to remove subsets $E,G\subset Q$ is crucial. See Theorem \ref{ussc} below, where we obtain a useful and easy to check
condition implying that $p(\cdot)\in {\mathcal U}_{\infty}$.

Our main result is the following.

\begin{theorem}\label{mr} Assume that $1<p_-\le p_+<\infty$. Then $p(\cdot)\in {\mathcal P}$ if and only if $p(\cdot)\in A_{p(\cdot)}\cap {\mathcal U}_{\infty}$.
\end{theorem}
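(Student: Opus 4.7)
The natural starting point is Diening's characterization (Theorem~\ref{Di}): when $1<p_-\le p_+<\infty$, the condition $p(\cdot)\in\mathcal P$ is equivalent to the uniform boundedness on $L^{p(\cdot)}$ of the averaging operators $A_{\mathcal F}$ over all finite families of pairwise disjoint cubes $\mathcal F$. Since the necessity of $A_{p(\cdot)}$ was already established in the introduction via $\langle|f|\rangle_Q\chi_Q\le M(f\chi_Q)$, the whole content of the theorem is to match the $\mathcal U_\infty$ condition, under the standing assumption $p(\cdot)\in A_{p(\cdot)}$, with this uniform boundedness of the $A_{\mathcal F}$'s.

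For sufficiency, I would realize the $L^{p(\cdot)}$ norm of $A_{\mathcal F}f$ by duality as
$$\sup_{\|g\|_{L^{p'(\cdot)}}\le 1}\sum_{Q\in\mathcal F}\langle|f|\rangle_Q\langle|g|\rangle_Q|Q|,$$
and then, for small parameters $0<\lambda,\tau<\gamma$, decompose each cube according to non-increasing rearrangements: let $E_Q\subset Q$ be a set of ``peak'' values of $|f|$ of measure $\lambda|Q|$ and $G_Q\subset Q$ the analogous set for $|g|$ of measure $\tau|Q|$. Splitting $f=f\chi_{E_Q}+f\chi_{Q\setminus E_Q}$ and $g$ similarly produces four bilinear pieces per cube. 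The three ``peak'' pieces, in which at least one factor is supported on $E_Q$ or $G_Q$, are handled by the bilinear form of $A_{p(\cdot)}$ recorded in the introduction; the smallness of $\lambda,\tau$ on these pieces is then exploited by running the argument through a dyadic geometric scale of thresholds and summing.

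The remaining ``flat'' piece, where $|f|\le (f\chi_Q)^*(\lambda|Q|)$ on $Q\setminus E_Q$ and $|g|\le (g\chi_Q)^*(\tau|Q|)$ on $Q\setminus G_Q$, is precisely where $\mathcal U_\infty$ enters. The basic inequality
$$(f\chi_Q)^*(\lambda|Q|)\le\frac{\|f\chi_Q\|_{L^{p(\cdot)}}}{\|\chi_E\|_{L^{p(\cdot)}}}\qquad(E\subset Q,\ |E|=\lambda|Q|),$$
combined with the pointwise envelope $\|\chi_E\|_{L^{p(\cdot)}}^{-1}\approx\lambda^{1/p(y)}$ selecting $y\in Q\setminus E$, and a symmetric estimate giving $\tau^{1/p'(x)}\|g\chi_Q\|_{L^{p'(\cdot)}}$ for $g$, produces after a Young-type rebalancing of the exponents at the pair $(x,y)$ precisely the weight $F_{p,\lambda,\tau}(x,y)$ on the set $\{p(x)>p(y)\}$; the complementary region is covered by the symmetric $\mathcal U_\infty$-statement applied to $p'(\cdot)$. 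A small loss from $1$ to $1/r$ in the exponent is inserted in order to H\"older through a residual $A_{p(\cdot)}$ factor. Summing over $\mathcal F$ and invoking \eqref{uinfty} closes the estimate. The main obstacle I expect is the fine orchestration of exponents that produces \emph{exactly} the expression $F_{p,\lambda,\tau}$: several natural routes give weaker variants, and it is the correct balancing at $x\neq y$ that distinguishes $\mathcal U_\infty$ from cruder pointwise conditions such as $N_\infty$.

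For necessity, assuming $A_{\mathcal F}$ is uniformly bounded, I would test the duality pairing on functions tailored to arbitrary admissible subsets $E_Q,G_Q$: take $f=\sum_Q a_Q\chi_{Q\setminus E_Q}$ and $g=\sum_Q b_Q\chi_{Q\setminus G_Q}$ with heights $a_Q,b_Q$ designed so that $\|f\|_{L^{p(\cdot)}}\lesssim 1$, $\|g\|_{L^{p'(\cdot)}}\lesssim 1$, while the bilinear sum reconstructs $|Q|^{-1}\int_{Q\setminus G_Q}\!\int_{Q\setminus E_Q}F_{p,\lambda,\tau}(x,y)^{1/r}\,dx\,dy$. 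Optimizing over $E_Q,G_Q$ then recovers \eqref{uinfty}. Structurally this direction is dual to the sufficiency argument and should be the easier half once the correct heights are chosen.
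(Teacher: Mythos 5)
Your framing is right at the top level (reduce to Theorem \ref{Di}, note that the necessity of $A_{p(\cdot)}$ is trivial, decompose each cube by rearrangement level sets at scales $\la$ and $\tau$), but the two steps that carry the entire weight of the theorem are asserted rather than proved, and the mechanisms you propose for them do not plausibly work. In the sufficiency direction, the claim that a ``Young-type rebalancing'' of factors like $\la^{1/p(y)}$ and $\tau^{1/p'(x)}$ produces the weight $F_{p,\la,\tau}(x,y)$ is the gap: the exponents $\tfrac{p(y)}{p(x)-p(y)}$ and $\tfrac{p'(x)}{p'(y)-p'(x)}$ blow up as $p(x)\to p(y)$, and no H\"older/Young interpolation between fixed powers of $\la$ and $\tau$ generates a denominator $p(x)-p(y)$. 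In the paper these exponents arise from a genuinely different source: one first passes (via duality and Diening's self-improvement results --- boundedness of $M_r$ and Theorem \ref{thd}, neither of which appears in your outline but both of which are needed to get the crucial $r>1$) to a two-parameter condition on the localized rearrangement values $(p\chi_Q)^*(\la|Q|)$ and $(p\chi_Q)^*((1-\tau)|Q|)$ with free heights $\a_Q$, and then eliminates the $\a_Q$ by solving for the \emph{extremal root} of the equation $\tau^{1/r}(\la^{1/r}t)^{(p\chi_Q)^*((1-\tau)|Q|)}=K\la\, t^{(p\chi_Q)^*(\la|Q|)}$ (Lemma \ref{aneq}); the ratio structure of $F_{p,\la,\tau}$ is exactly the formula for that root. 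Without this optimization step there is no route from your ``flat piece'' to (\ref{uinfty}).

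The necessity direction has a more decisive problem. Testing the bilinear form on $f=\sum_Q a_Q\chi_{Q\setminus E_Q}$ and $g=\sum_Q b_Q\chi_{Q\setminus G_Q}$ yields, per cube, a quantity governed by the single product $a_Qb_Q$ times $|Q|$; this cannot ``reconstruct'' $\frac{1}{|Q|}\int_{Q\setminus G_Q}\int_{Q\setminus E_Q}F_{p,\la,\tau}(x,y)^{1/r}\,dx\,dy$, which depends on the full joint distribution of $p$ over $Q\times Q$ through a nonseparable kernel. The paper instead proves that membership in ${\mathcal P}$ forces the rearrangement inequality (\ref{intcon}) involving $\Psi_{Q,p}$ and $\Psi_{Q,p'}$, computes the iterated rearrangement of $F_{p,\la,\tau}\chi_{Q\times Q}$ exactly (Lemma \ref{computr}: it equals $\tau^{\Psi_{Q,p}(t,s)}\la^{\Psi_{Q,p'}(s,t)}$), and connects the infimum over subsets $E,G$ to that iterated rearrangement via Proposition \ref{itrpr} together with a four-way splitting of $(Q\setminus E_\la)\times(Q\setminus G_\tau)$. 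None of this machinery is replaceable by a choice of test functions in the duality pairing, so as written the proposal does not establish either implication between ${\mathcal U}_\infty$ and ${\mathcal P}$.
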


The proof of this result is based essentially on Theorem \ref{Di}, and our key tool is non-increasing rearrangements. In fact, we obtain a number of intermediate characterizations of the class ${\mathcal P}$
formulated in Section \ref{ss4}. They can be seen as a result of the process of passing from integrals to rearrangements.
What we are doing finally in the proof of Theorem \ref{mr} represents a converse process, that is, we come back from rearrangements to integrals. In Section \ref{ss2} we provide an informal discussion and
a sketch of the proof of Theorem \ref{mr}.

In conclusion we formulate several questions related to Theorem \ref{mr}.

\begin{que}\label{q1}
It would be interesting to extend Theorem \ref{mr} to the case of unbounded exponents. Indeed, the maximal operator is trivially bounded on $L^{\infty}$, and therefore this case is quite natural.
Theorems~\ref{sufc} and~\ref{sufcn} allow unbounded exponents. However, in our method of the proof the restriction $p_+<\infty$ is essential.
\end{que}

\begin{que}\label{q2}
It is crucial in the proof of Theorem \ref{mr} that the ${\mathcal U}_{\infty}$ condition is formulated with some $r>1$. On the other hand, this looks somewhat unnatural.
In analogy with the weighted $L^p(w)$ spaces, this looks like we prove the boundedness of $M$ on $L^p(w)$ assuming the $A_{p-\e}$ condition instead of $A_p$. Therefore, it is natural to ask whether
the ${\mathcal U}_{\infty}$ condition is self-improving, namely, whether (\ref{uinfty}) with $r=1$ (perhaps coupled with the $A_{p(\cdot)}$ condition) implies (\ref{uinfty}) with some $r>1$.
\end{que}

\begin{que}\label{g3} In our opinion, Theorem \ref{mr} is a satisfactory solution to Problem \ref{mq}. However, this does not exclude that another characterization of the class ${\mathcal P}$ could be found in simpler terms. In particular, it is not clear to us whether the ${\mathcal U}_{\infty}$ condition can be essentially simplified still being a necessary and sufficient condition (of course along with $A_{p(\cdot)}$).
Currently we have in this condition two independent parameters $\la$ and $\tau$. We do not know whether one can get rid of one of them.
\end{que}

The paper is organized as follows. In Section \ref{ss2} we discuss informally some ideas used in the proof of Theorem \ref{mr}.
Section \ref{ss3} is devoted to non-increasing rearrangements. In Section \ref{ss4} we obtain several intermediate characterizations of the class ${\mathcal P}$. In Section \ref{ss5} we prove Theorem~\ref{mr}.
Section~\ref{ss6} is devoted to the ${\mathcal U}_{\infty}$ condition. In particular, we give new examples of exponents $p(\cdot)\in {\mathcal P}$.

\section{A sketch of the proof}\label{ss2}
In this section we will briefly outline our approach to proving Theorem \ref{mr}. We split the proof into a number of intermediate steps, where each step represents its own characterization of the class ${\mathcal P}$.
Throughout the proof we deal with localized non-increasing rearrangements of the form $(p\chi_Q)^*(\la|Q|)$, where $Q\subset {\mathbb R}^n$ is a cube, and $0<\la<1$. We also assume that $p_->1$ and $p_+<\infty$,
and ${\mathcal F}$ is an arbitrary finite family of pairwise disjoint cubes.

\vskip 2mm
\noindent
{\bf Step 1.} We start by showing that $p(\cdot)\in {\mathcal P}$ if and only if $p(\cdot)\in A_{p(\cdot)}$ and the following property holds:
\begin{equation}\label{st1}
\sum_{Q\in {\mathcal F}}|Q|\int_0^{\la}\a_Q^{(p\chi_Q)^*(t|Q|)}dt\le 1\Rightarrow \sum_{Q\in {\mathcal F}}\int_Q(\la^{1/r}\a_Q)^{p(x)}dx\le C,
\end{equation}
where $C>0,r>1$, $0<\la<\la_0<1$, and $0<\a_Q<1$.
This is proved in Theorem \ref{char1} along with Remark \ref{eqform}.

Of course, this characterization is rather technical, and our further goal is to simplify it. More precisely, our primary goal is to get rid of the sequences $\{\a_Q\}_{Q\in {\mathcal F}}$ in (\ref{st1}).
The idea is to find an extremal sequence $\{\a_Q\}$ satisfying (\ref{st1}), and to insert it to the second sum in~(\ref{st1}). In such a way we would obtain a one sum condition, and without arbitrary~$\{\a_Q\}$.
This task is not so easy mainly because we have an integral of the form $\int_Qt^{p(x)}dx$ on the right-hand side of (\ref{st1}), which is not so convenient to deal with.
That is why in our second step we replace it (and the integral on the left-hand side as well) by an expression involving rearrangements.

\vskip 2mm
\noindent
{\bf Step 2.} We show in Theorem \ref{char2} that $p(\cdot)\in {\mathcal P}$ if and only if $p(\cdot)\in A_{p(\cdot)}$ and the following property holds:
$$
\la\sum_{Q\in {\mathcal F}}|Q|\a_Q^{(p\chi_Q)^*(\la|Q|)}\le 1\Rightarrow \tau^{1/r}\sum_{Q\in {\mathcal F}}|Q|(\la^{1/r}\a_Q)^{(p\chi_Q)^*((1-\tau)|Q|)}\le C,
$$
where, additionally to above parameters, we also have $0<\tau<\tau_0<1$.

Thus, we remove the integral on the right-hand side of (\ref{st1}) but the price is an additional
parameter $\tau$. In such a way two parameters $\la$ and $\tau$ appear in the ${\mathcal U}_{\infty}$ condition. Now, a simple analysis of power functions allows us to get rid of sequences~$\{\a_Q\}$.

\vskip 2mm
\noindent
{\bf Step 3.}
We show that $p(\cdot)\in {\mathcal P}$ if and only if $p(\cdot)\in A_{p(\cdot)}$ and the following property holds:
\begin{equation}\label{st3}
\sum_{Q\in {\mathcal F}}|Q|\tau^{\frac{1}{r}(1+\Psi_{Q,p}(\la,\tau))}\la^{\frac{1}{r}(1+\Psi_{Q,p'}(\tau,\la))}\le C,
\end{equation}
where
$$
\Psi_{Q,p}(\la,\tau):=\frac{(p\chi_Q)^*((1-\tau)|Q|)}{(p\chi_Q)^*(\la|Q|)-(p\chi_Q)^*((1-\tau)|Q|)},
$$
and $0<\la,\tau<\ga_0<1/2$. This is done in Lemma \ref{aneq} and Theorem~\ref{fstm}.

Observe that
$$\lim_{t\to 0}(p\chi_Q)^*(t|Q|)=\esssup_{Q}p\quad\text{and}\quad \lim_{t\to 1}(p\chi_Q)^*(t|Q|)=\essinf_Qp.$$
Thus, if $\ga_0$ is small enough, and $0<\la,\tau<\ga_0$ we have that $(p\chi_Q)^*(\la|Q|)$ is close to the supremum of $p(\cdot)$ on $Q$ and $(p\chi_Q)^*((1-\tau)|Q|)$ is close to the infimum of $p(\cdot)$ on $Q$.
For this reason, (\ref{st3}) can be thought as an oscillation property of $p(\cdot)$.

Our next goal is to rewrite (\ref{st3}) in the language of integrals. This is a technically difficult task. First we observe that $(\la\tau)^{1/r}$ in (\ref{st3}) can be replaced by $\la\tau$.

\vskip 2mm
\noindent
{\bf Step 4.} We show in Theorem \ref{fstm1} that $p(\cdot)\in {\mathcal P}$ if and only if $p(\cdot)\in A_{p(\cdot)}$ and the following property holds:
\begin{equation}\label{st4}
\sum_{Q\in {\mathcal F}}|Q|(\tau\lambda)\tau^{\frac{1}{r}\Psi_{Q,p}(\la,\tau)}\la^{\frac{1}{r}\Psi_{Q,p'}(\tau,\la)}\le C.
\end{equation}

We emphasize that all the above statements are understood in the sense ``there exists $r>1$ such that...". In particular, $r>1$ may differ from one statement to another.
The direction that $(\ref{st3})\Rightarrow (\ref{st4})$ is obvious with the same $r>1$. In the converse direction, we show that given an $r>1$ for which (\ref{st4}) holds, there exists $1<s<r$ such that (\ref{st3}) holds
with $s$ instead of $r$.

An interplay between (\ref{st3}) and (\ref{st4}) allows us to establish the following intermediate characterization.

\vskip 2mm
\noindent
{\bf Step 5.} We show in Theorem \ref{yac} that $p(\cdot)\in {\mathcal P}$ if and only if $p(\cdot)\in A_{p(\cdot)}$ and the following property holds:
\begin{equation}\label{st5}
\sum_{Q\in {\mathcal F}}|Q|\int_{\la}^{\ga_0}\int_{\tau}^{\ga_0}
\tau^{\frac{1}{r}\Psi_{Q,p}(t,s)}\la^{\frac{1}{r}\Psi_{Q,p'}(s,t)}ds dt\le C.
\end{equation}

Finally, we may connect (\ref{st5}) with the function $F_{p,\la,\tau}$ from Definition~\ref{fpla}.

\vskip 2mm
\noindent
{\bf Step 6.} We show in Lemma \ref{computr} that for almost all $t,s\in (0,1)$ such that $t+s<1$,
\begin{equation}\label{st6}
(F_{p,\la,\tau}\chi_{Q\times Q})^*(t|Q|,s|Q|)=\tau^{\Psi_{Q,p}(t,s)}\la^{\Psi_{Q,p'}(s,t)}.
\end{equation}

The object on the left-hand side here is the iterated rearrangement, see Section \ref{ss32} for its definition.
This is the way on how the function $F_{p,\la,\tau}$ comes to the picture. Now, a combination of (\ref{st5}) and (\ref{st6}) leads
to the proof of Theorem~\ref{mr} (although, there is still some technical work which we will not mention here).

\section{Non-increasing rearrangements}\label{ss3}
\subsection{One-variable rearrangements}\label{ss31}
Given a measurable function $f$ on ${\mathbb R}^n$, its non-increasing rearrangement $f^*$ is defined by
$$f^*(t):=\inf\{\a>0:|\{x\in {\mathbb R}^n:|f(x)|>\a\}|\le t\}\quad(0<t<\infty).$$
If $f$ is defined on a set $E\subset {\mathbb R}^n$, its rearrangement is understood as $f^*(t):=(f\chi_E)^*(t)$.

For main properties of the rearrangements we refer to \cite[Ch. 2]{BS}. Here we mention several specific facts which will be frequently used
in this paper.


\begin{prop}\label{fql} Let $f$ be a locally integrable function on ${\mathbb R}^n$. Then, for every cube $Q\subset {\mathbb R}^n$ and for all $\la\in (0,1)$,
\begin{equation}\label{rearpr}
\int_Q|f(x)|dx\le \frac{1}{\la}\int_0^{\la|Q|}(f\chi_Q)^*(t)dt
\end{equation}
and
\begin{equation}\label{infpr}
\inf_{E\subset Q:|E|=\la|Q|}\int_E|f(x)|dx=\int_{(1-\la)|Q|}^{|Q|}(f\chi_Q)^*(t)dt.
\end{equation}
\end{prop}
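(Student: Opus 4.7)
My plan is to derive both parts from two standard facts about the non-increasing rearrangement, both of which can be found in \cite[Ch.~2]{BS}: the equimeasurability identity
\[
\int_Q |f(x)|dx = \int_0^{|Q|}(f\chi_Q)^*(t)dt,
\]
and the Hardy--Littlewood maximal identity
\[
\sup_{F\subset Q,\,|F|=s} \int_F |f(x)|dx = \int_0^{s}(f\chi_Q)^*(t)dt, \qquad 0<s\le |Q|.
\]

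For (\ref{rearpr}) I will simply use monotonicity of the rearrangement. Since $(f\chi_Q)^*$ is non-increasing on $(0,\infty)$, its average over the smaller interval $[0,\la|Q|]$ is at least its average over $[0,|Q|]$. Spelling this out,
\[
\frac{1}{\la|Q|}\int_0^{\la|Q|}(f\chi_Q)^*(t)dt \ge \frac{1}{|Q|}\int_0^{|Q|}(f\chi_Q)^*(t)dt = \frac{1}{|Q|}\int_Q |f(x)|dx,
\]
and multiplying through by $|Q|$ gives (\ref{rearpr}).

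For (\ref{infpr}) the plan is to pass to complements. For every $E\subset Q$ with $|E|=\la|Q|$, set $F:=Q\setminus E$; then $|F|=(1-\la)|Q|$ and $\int_E|f|=\int_Q|f|-\int_F|f|$, so the infimum over admissible $E$ coincides with the supremum over admissible $F$ subtracted from $\int_Q|f|$. Applying both displayed identities I obtain
\[
\inf_{|E|=\la|Q|}\int_E|f|dx = \int_Q|f|dx - \sup_{|F|=(1-\la)|Q|}\int_F|f|dx = \int_0^{|Q|}(f\chi_Q)^*(t)dt - \int_0^{(1-\la)|Q|}(f\chi_Q)^*(t)dt,
\]
which is precisely the right-hand side of (\ref{infpr}).

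The only technicality, which I expect to be the main (minor) obstacle, is showing that the Hardy--Littlewood supremum is attained when $|f|$ has a plateau at the critical level $(f\chi_Q)^*((1-\la)|Q|)$, since then no strict super-level set of $|f|$ has exactly measure $(1-\la)|Q|$. This is handled by a standard device: one starts from the strict super-level set of the threshold value and adjoins a measurable subset of the plateau of the correct additional measure; the supremum is unchanged and is genuinely attained. Consequently the infimum in (\ref{infpr}) is likewise attained, and the identity holds with equality.
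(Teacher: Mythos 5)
Your proof is correct and follows essentially the same route as the paper: inequality (\ref{rearpr}) from the monotonicity of $t\mapsto\frac{1}{t}\int_0^t(f\chi_Q)^*(s)\,ds$ together with equimeasurability, and (\ref{infpr}) by passing to complements in the Hardy--Littlewood supremum identity of \cite[Ch.~2]{BS}. The closing remark on attainment is not needed for the identity itself (the supremum formula holds regardless), and the paper treats that point separately in Remark \ref{bsh}.
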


\begin{proof} To show (\ref{rearpr}), it is enough to note that
$$\int_Q|f(x)|dx=\int_0^{|Q|}(f\chi_Q)^*(t)dt$$
and to use
the fact that the function $f^{**}(t):=\frac{1}{t}\int_0^{t}f^*(s)ds$ is non-increasing (see, e.g., \cite[Pr. 3.2]{BS}).

Now, (\ref{infpr}) is an equivalent form of the well-known property
\begin{equation}\label{rearpr1}
\sup_{E\subset Q:|E|=s|Q|}\int_E|f(x)|dx=\int_0^{s|Q|}(f\chi_Q)^*(t)dt\quad(0<s<1)
\end{equation}
(see, e.g., \cite[Pr. 3.3]{BS}), taking into account that
$$
\inf_{E\subset Q:|E|=\la|Q|}\int_E|f|=\int_Q|f|-\sup_{G\subset Q:|G|=(1-\la)|Q|}\int_G|f|.
$$
\end{proof}

\begin{remark}\label{bsh}
Observe that the supremum in (\ref{rearpr1}) is attained on an arbitrary subset
$$E\subset \{x\in Q:|f(x)|\ge (f\chi_Q)^*(s|Q|)\}$$ of measure $s|Q|$
(see, e.g, \cite[Lemma 2.5]{BS}). Therefore, the infimum in (\ref{infpr}) is attained on an arbitrary subset
$$E\subset \{x\in Q:|f(x)|\le (f\chi_Q)^*((1-\la)|Q|)\}$$
of measure $\la|Q|$.
\end{remark}

Several statements below are about how the rearrangements commute with compositions.
They are apparently well known but we could not find them in the literature in the form presented below.

\begin{lemma}\label{compos} Let $f$ be a non-negative measurable function on a set $E\subset {\mathbb R}^n$ of finite measure.
\begin{enumerate}[(i)]
\item
Assume that $\f$ is a non-negative, strongly increasing continuous function on an interval which contains $f(E)$. Then for $\psi:=\f\circ f$ and for all $t\in (0,|E|)$ we have
$$\psi^*(t)=\f\big(f^*(t)\big).$$
\item
Assume that $\f$ is a non-negative, strongly decreasing continuous function on an interval which contains $f(E)$. Then for $\psi:=\f\circ f$ we have
$$\psi^*(t)=\f\big(f^*(|E|-t)\big)$$
for almost all $t\in (0,|E|)$.
\end{enumerate}
\end{lemma}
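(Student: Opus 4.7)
The plan is to treat both parts by constructing an explicit candidate for $\psi^*$ and then using the uniqueness of the non-increasing rearrangement (the non-increasing, right-continuous, non-negative function on $(0,|E|)$ equimeasurable with $\psi$ is determined uniquely, and without right-continuity it is determined a.e.). So the work reduces to checking monotonicity, continuity, and equimeasurability of the candidate.

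For part (i), I would define $g(t):=\varphi(f^*(t))$ on $(0,|E|)$. Since $\varphi$ is non-negative, strictly increasing and continuous, and $f^*$ is non-negative, non-increasing and right-continuous, $g$ inherits these properties. To check equimeasurability with $\psi$, fix $\alpha\ge 0$. If $\alpha \ge \sup\varphi(f(E))$ both level sets are null; if $\alpha<\inf\varphi(f(E))$ both have measure $|E|$. Otherwise $\alpha=\varphi(\beta)$ for a unique $\beta$ in the range of $f$, and strict monotonicity of $\varphi$ gives
$$|\{t\in(0,|E|):g(t)>\alpha\}|=|\{t:f^*(t)>\beta\}|=|\{x\in E:f(x)>\beta\}|=|\{x\in E:\psi(x)>\alpha\}|,$$
using the standard equimeasurability of $f$ and $f^*$. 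By uniqueness, $g=\psi^*$ on all of $(0,|E|)$.

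For part (ii), I would define $h(t):=\varphi(f^*(|E|-t))$. Since $s\mapsto f^*(|E|-s)$ is non-decreasing and $\varphi$ is strictly decreasing, $h$ is non-increasing; however $h$ is in general only left-continuous (the symmetry in $s$ interchanges right- and left-continuity), so I should only expect a.e.\ equality with $\psi^*$. For equimeasurability, fix $\alpha\ge 0$ in the relevant range, $\alpha=\varphi(\beta)$. Strict monotonicity of $\varphi$ gives $\{h>\alpha\}=\{t:f^*(|E|-t)<\beta\}$, and the substitution $u=|E|-t$ yields
$$|\{h>\alpha\}|=|\{u\in(0,|E|):f^*(u)<\beta\}|=|E|-|\{u:f^*(u)\ge\beta\}|=|E|-|\{x\in E:f(x)\ge\beta\}|,$$
where the last step uses that $f$ and $f^*$ are equimeasurable (which also transfers to non-strict inequalities via monotone limits). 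This equals $|\{x:f(x)<\beta\}|=|\{x:\varphi(f(x))>\alpha\}|=|\{\psi>\alpha\}|$. Hence $h$ is a non-increasing function on $(0,|E|)$ equimeasurable with $\psi$, so $h=\psi^*$ almost everywhere, which is the desired conclusion.

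The only genuinely delicate point I expect is the passage from $|\{f^*>\beta\}|$ to $|\{f^*\ge\beta\}|$ and the handling of the exceptional $\alpha$ that lie outside the essential range of $\varphi\circ f$, since there the preimage under $\varphi$ is not a single point. Both issues are resolved by taking one-sided monotone limits in $\alpha$ and recalling that $\mu_f$ and $\mu_{f^*}$ coincide as functions, hence agree at all but countably many points (where they have common jumps); this produces the needed equality of measures, and in case (ii) the same countable set of jumps is exactly where $h$ and $\psi^*$ may disagree, explaining why the identity holds only almost everywhere.
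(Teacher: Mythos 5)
Your argument is correct, but it takes a different (equally standard) route from the paper's. You build the explicit candidate $g(t)=\f(f^*(t))$ (resp.\ $h(t)=\f(f^*(|E|-t))$), verify that it is non-increasing and equimeasurable with $\psi=\f\circ f$ by reducing each level set $\{\psi>\a\}$ to a level set of $f$ via strict monotonicity of $\f$, and then invoke the uniqueness of the non-increasing rearrangement among equimeasurable monotone functions (exact under right-continuity, a.e.\ otherwise). The paper instead works directly from the definition of $\psi^*$ as an infimum: it estimates the distribution function of $\psi$ at the two specific levels $\f(f^*(t))$ and $\f(f^*(t))-\e$ to squeeze $\psi^*(t)$, and in part (ii) the lower bound only yields $\f(f^*(|E|-t))-\e\le\psi^*(t-\d)$, so one must let $\d\to0$ at a continuity point of $\psi^*$ --- which is exactly where the ``almost all'' enters there. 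Your route is more modular and makes the exceptional set in (ii) transparent (the countable set of common jumps, matching your observation that $h$ is only left-continuous), at the cost of appealing to the uniqueness/equimeasurability characterization as a black box (available in Bennett--Sharpley, which the paper cites); the paper's proof is more elementary and self-contained. Two small imprecisions in your write-up, neither fatal: the $\b$ with $\a=\f(\b)$ should be taken in the interval on which $\f$ is defined (not necessarily in the range of $f$), and the identity $|\{f^*\ge\b\}|=|\{f\ge\b\}|$ does need the monotone-limit argument you sketch at the end, so it is good that you flagged it.
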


\begin{proof} We start with part (i). Using that
$$\{x\in E:\psi(x)>\a\}=\{x\in E:f(x)>\f^{-1}(\a)\},$$
we obtain
$$|\{x\in E: \psi(x)>\f\big(f^*(t)\big)\}|\le t,$$
and, for any $\e>0$,
$$|\{x\in E: \psi(x)>\f\big(f^*(t)\big)-\e\}|> t,$$
from which we conclude that $\psi^*(t)=\f\big(f^*(t)\big)$.

Turn to part (ii). We have
\begin{eqnarray}
|\{x\in E:\psi(x)>\a\}|&=&|\{x\in E:f(x)<\f^{-1}(\a)\}|\nonumber\\
&=&|E|-|\{x\in E:f(x)\ge \f^{-1}(\a)\}|.\label{sesti}
\end{eqnarray}
From this,
$$|\{x\in E: \psi(x)>\f\big(f^*(|E|-t)\big)\}|\le t,$$
and hence
\begin{equation}\label{fpart}
\psi^*(t)\le\f\big(f^*(|E|-t)\big).
\end{equation}

On the other hand, using (\ref{sesti}), for $\e>0$ we obtain
$$|\{x\in E: \psi(x)>\f\big(f^*(|E|-t)\big)-\e\}|\ge t.$$
From this, for any $\d>0$,
$$
\f\big(f^*(|E|-t)\big)-\e\le \psi^*(t-\d).
$$
Hence, assuming that $t$ is the point of continuity of $\psi^*(t)$ and letting $\e,\d\to 0$, we obtain
$$\f\big(f^*(|E|-t)\big)\le \psi^*(t),$$
which, along with (\ref{fpart}), completes the proof.
\end{proof}

\begin{remark}\label{explan} A simple example shows that  the requirement ``almost all" in the second part of Lemma \ref{compos} cannot be replaced by ``all".
Indeed, assume that
$$
f(x):=
\begin{cases}
1,&x\in (0,1/2]\\
2,&x\in (1/2,1)
\end{cases}
$$
and $\f(x):=\frac{1}{x}$. Then for $t\in (0,1)$ we have
$$
f^*(t)=
\begin{cases}
2,&t\in (0,1/2)\\
1,&t\in [1/2,1),
\end{cases}
$$
and, for $\psi(x):=\frac{1}{f(x)}$,
$$
\psi^*(t)=
\begin{cases}
1,&t\in (0,1/2)\\
1/2,&t\in [1/2,1).
\end{cases}
$$
Therefore, the equality $\psi^*(t)=\frac{1}{f^*(1-t)}$ does not hold for $t=1/2$.
\end{remark}

\begin{lemma}\label{fila} Let $f$ be a non-negative measurable function on a set $E\subset {\mathbb R}^n$, and let $\a\ge 0$.
Then, for
$$g(x):=f(x)\chi_{\{x\in E:f(x)>\a\}}(x)$$
we have
$$g^*(t)=f^*(t)\chi_{\{t\in (0,|E|):f^*(t)>\a\}}(t).$$
\end{lemma}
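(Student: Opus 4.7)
The plan is to work with distribution functions. Write $\mu_h(\beta):=|\{x\in E:h(x)>\beta\}|$ for a non-negative measurable $h$ on $E$; recall that $\mu_h$ is non-increasing and right-continuous, and that $h^*(t)=\inf\{\beta\ge 0:\mu_h(\beta)\le t\}$. First I would compute $\mu_g$ in terms of $\mu_f$. On the set $\{f\le\alpha\}$ the function $g$ vanishes, while $g=f$ on $\{f>\alpha\}$. Hence, for $0\le\beta<\alpha$,
$$\{x\in E:g(x)>\beta\}=\{x\in E:f(x)>\alpha\},$$
so $\mu_g(\beta)=\mu_f(\alpha)$; and for $\beta\ge\alpha$, $\{g>\beta\}=\{f>\beta\}$, so $\mu_g(\beta)=\mu_f(\beta)$.

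Next I would split into two cases according to the position of $t$ relative to $\mu_f(\alpha)$. If $t\ge\mu_f(\alpha)$, then taking $\beta\searrow 0$ and using $\mu_g(\beta)=\mu_f(\alpha)\le t$ for $0<\beta<\alpha$ gives $g^*(t)=0$. On the other hand $\alpha$ itself belongs to $\{\beta:\mu_f(\beta)\le t\}$, so $f^*(t)\le\alpha$, and the right-hand side of the asserted identity is $0$ as well. If instead $t<\mu_f(\alpha)$, then $\mu_g(\beta)=\mu_f(\alpha)>t$ for every $\beta\in(0,\alpha)$, so such $\beta$ are not in the defining set of $g^*(t)$; combined with $\mu_g(\beta)=\mu_f(\beta)$ for $\beta\ge\alpha$ and $\mu_f(\beta)\ge\mu_f(\alpha)>t$ for $\beta<\alpha$, this yields
$$g^*(t)=\inf\{\beta\ge\alpha:\mu_f(\beta)\le t\}=\inf\{\beta\ge 0:\mu_f(\beta)\le t\}=f^*(t).$$
Finally I would invoke right-continuity of $\mu_f$ at $\alpha$: since $\mu_f(\alpha)>t$, there exists $\varepsilon>0$ with $\mu_f(\beta)>t$ on $[\alpha,\alpha+\varepsilon)$, whence $f^*(t)\ge\alpha+\varepsilon>\alpha$. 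Therefore $\chi_{\{f^*>\alpha\}}(t)=1$, and the identity holds.

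The only subtle point — and the main place where one has to be careful — is upgrading the inequality $f^*(t)\ge\alpha$ to the strict inequality $f^*(t)>\alpha$ in the second case; this is exactly what the right-continuity of the distribution function delivers, and it is also what rules out the potentially problematic configuration in which $f^*(t)=\alpha$ yet $g^*(t)>0$.
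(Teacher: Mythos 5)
Your proof is correct and takes essentially the same approach as the paper's: both arguments rest on the level--set identities $\{g>\beta\}=\{f>\alpha\}$ for $0\le\beta<\alpha$ and $\{g>\beta\}=\{f>\beta\}$ for $\beta\ge\alpha$, and split into the two equivalent cases $\mu_f(\alpha)\le t$ (i.e., $f^*(t)\le\alpha$) and $\mu_f(\alpha)>t$ (i.e., $f^*(t)>\alpha$); your right-continuity step is just the standard proof of that equivalence, which the paper's case split uses implicitly. The only cosmetic point is that your Case 1 argument ``take $\beta\searrow 0$ with $0<\beta<\alpha$'' is vacuous when $\alpha=0$, but there $g=f$ and the claim is trivial (or observe directly that $\mu_g(0)=\mu_f(\alpha)\le t$ already forces $g^*(t)=0$).
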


\begin{proof}
Denote $E_{\a}:=\{x\in E:f(x)>\a\}$. Suppose that $f^*(t)\le \a$. Then $|E_{\a}|\le t$. Since $g$ is supported in $E_{\a}$, we obtain that $g^*(t)=0$.

Suppose now that $f^*(t)>\a$. Then, on the one hand, we have
$$|\{x\in E:g(x)>f^*(t)\}|=|\{x\in E_{\a}: f(x)>f^*(t)\}|\le t.$$
On the other hand, for $\e>0$,
\begin{eqnarray*}
&&|\{x\in E:g(x)>f^*(t)-\e\}|=|\{x\in E_{\a}:f(x)>f^*(t)-\e\}|\\
&&=|\{x\in E:f(x)>\max(\a, f^*(t)-\e)\}|>t,
\end{eqnarray*}
which proves that $g^*(t)=f^*(t)$.
\end{proof}

\begin{lemma}\label{comp1}
Let $f$ be a non-negative measurable function on a set $E\subset {\mathbb R}^n$ of finite measure, and let $\a>0$.
\begin{enumerate}[(i)]
\item
Assume that $\f$ is a non-negative, strongly increasing continuous function on $[\a,\sup_Ef]$. Then for
$$g(x):=\f(f(x))\chi_{\{x\in E:f(x)>\a\}}(x)$$
we have
$$g^*(t)=\f(f^*(t))\chi_{\{t\in (0,|E|):f^*(t)>\a\}}(t).$$
\item
Assume that $\f$ is a non-negative, strongly decreasing continuous function on $[\inf_Ef,\a]$. Then for
$$g(x):=\f(f(x))\chi_{\{x\in E:f(x)<\a\}}(x)$$
we have
$$g^*(t)=\f\big(f^*(|E|-t)\big)\chi_{\{t\in (0,|E|):f^*(|E|-t)<\a\}}(t)$$
for almost all $t\in (0,|E|)$.
\end{enumerate}
\end{lemma}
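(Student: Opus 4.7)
The plan is to reduce both parts to the two previously established lemmas: apply Lemma \ref{compos} to obtain the rearrangement of the composition $\varphi\circ f$, and then apply Lemma \ref{fila} to incorporate the characteristic function. The only technical point is that Lemma \ref{compos} requires $\varphi$ to be defined on a genuine interval containing $f(E)$, so one must first extend $\varphi$ past $\alpha$ while preserving its structural properties.

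For part (i), I will extend $\varphi$ to a non-negative, strongly increasing, continuous function $\tilde\varphi$ on $[0,\sup_E f]$ by, for instance, linearly interpolating from $(0,0)$ to $(\alpha,\varphi(\alpha))$ on $[0,\alpha]$ (this requires $\varphi(\alpha)>0$; the borderline case is handled below). Setting $F(x):=\tilde\varphi(f(x))$, strong monotonicity of $\tilde\varphi$ yields
$$\{x\in E:f(x)>\alpha\}=\{x\in E:F(x)>\varphi(\alpha)\},$$
so that $g(x)=F(x)\chi_{\{F>\varphi(\alpha)\}}(x)$. Lemma \ref{fila} applied to $F$ with threshold $\varphi(\alpha)$, combined with Lemma \ref{compos}(i) applied to $F=\tilde\varphi\circ f$, should give
$$g^*(t)=F^*(t)\chi_{\{F^*(t)>\varphi(\alpha)\}}(t)=\tilde\varphi(f^*(t))\chi_{\{\tilde\varphi(f^*(t))>\varphi(\alpha)\}}(t);$$
on the indicator set $\{f^*(t)>\alpha\}$ the extension coincides with $\varphi$, yielding the claimed identity. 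The borderline case $\varphi(\alpha)=0$ can be handled by replacing $\varphi$ with $\varphi_\varepsilon:=\varphi+\varepsilon$, running the argument above, and letting $\varepsilon\to 0^+$, using that both $g_\varepsilon^*$ and $\varphi_\varepsilon(f^*(t))$ decrease monotonically to the desired limits.

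Part (ii) will follow the same template with Lemma \ref{compos}(ii) in place of Lemma \ref{compos}(i). The extension is now a non-negative, strongly decreasing, continuous $\tilde\varphi$ defined on $[\inf_E f,\sup_E f]$ (again via an $\varepsilon$-shift when $\varphi(\alpha)=0$, with a linear or exponential tail on $[\alpha,\sup_E f]$). Strong decrease of $\tilde\varphi$ flips the inequality, giving the key identity $\{f<\alpha\}=\{F>\varphi(\alpha)\}$, after which Lemma \ref{fila} and Lemma \ref{compos}(ii) combine to produce
$$g^*(t)=\tilde\varphi(f^*(|E|-t))\chi_{\{\tilde\varphi(f^*(|E|-t))>\varphi(\alpha)\}}(t)$$
for almost every $t$. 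Rewriting the indicator as $\chi_{\{f^*(|E|-t)<\alpha\}}(t)$ via strong decrease, and noting that $\tilde\varphi$ equals $\varphi$ on that set, yields the claim. The ``almost every $t$'' qualifier is inherited directly from Lemma \ref{compos}(ii) and cannot be upgraded, as the example in Remark \ref{explan} shows.

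The main obstacle is really the construction of the extension $\tilde\varphi$: one must arrange non-negativity, strong monotonicity, and continuity simultaneously across the seam at $u=\alpha$ and over the whole range of $f(E)$, including the cases $\varphi(\alpha)=0$ and $\sup_E f=\infty$. Once this is in place, the reduction to Lemmas \ref{fila} and \ref{compos} is immediate, and no further distribution-function computation is needed.
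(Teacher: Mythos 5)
Your proof is correct and follows essentially the same route as the paper: rewrite the indicator set as $\{\varphi(f)>\varphi(\alpha)\}$ (with the inequality flipped in the decreasing case), then combine Lemma \ref{fila} with Lemma \ref{compos}. The explicit extension of $\varphi$ to an interval containing $f(E)$ addresses a point the paper leaves implicit, and your treatment of the degenerate cases $\varphi(\alpha)=0$ and $\sup_E f=\infty$ is sound.
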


\begin{proof}
Part (i) is a combination of the first part of Lemma \ref{compos} along with Lemma \ref{fila} and the representations
$$\{x\in E:f(x)>\a\}=\{x\in E:\f(f(x))>\f(\a)\}$$
and
$$\{t\in (0,|E|):\f(f^*(t))>\f(\a)\}=\{t\in (0,|E|):f^*(t)>\a\}.$$

Part (ii) follows in the same way with the use of the second part of Lemma \ref{compos} along with Lemma \ref{fila} and the representations
$$\{x\in E:f(x)<\a\}=\{x\in E:\f(f(x))>\f(\a)\}$$
and
$$\{t\in (0,|E|):\f\big(f^*(|E|-t)\big)>\f(\a)\}=\{t\in (0,|E|):f^*(|E|-t)<\a\}.$$
\end{proof}

\subsection{Iterated rearrangements}\label{ss32}
Assume that $f(x,y)$ is a measurable function on ${\mathbb R}^{2n}$, where $x,y\in {\mathbb R}^n$. For a fixed $y$, define the usual rearrangement of $f(x,\cdot)$ with respect to $x$, and denote the resulting function by
$f^*(t,y), t>0, y\in {\mathbb R}^n$. Now, for a fixed $t$, define the usual rearrangement of $f^*(\cdot,y)$ with respect to $y$, and denote the resulting function by $f^*(t,s), t,s>0$.
Observe that this object is well known, see, e.g., \cite{Bl} for its basic properties.

We will need the following analogue of (\ref{infpr}).

\begin{prop}\label{itrpr} Let $f$ be a locally integrable function on ${\mathbb R}^{2n}$. Then, for every cube of the form $Q\times Q\subset {\mathbb R}^{2n}$ and for all $\la,\tau\in (0,1)$,
$$
\int_{(1-\la)|Q|}^{|Q|}\int_{(1-\tau)|Q|}^{|Q|}(f\chi_{Q\times Q})^*(t,s)dsdt\le \inf_{E\subset Q:|E|=\la|Q| \atop G\subset Q:|G|=\tau|Q|}\int_E\int_G|f(x,y)|dydx.
$$
\end{prop}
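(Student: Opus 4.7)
The plan is to prove the inequality by applying Proposition \ref{fql}, specifically (\ref{infpr}), twice in succession—once in the $x$-variable and once in the $y$-variable—with Tonelli's theorem used to swap the order of integration in between. Throughout, I fix arbitrary subsets $E, G \subset Q$ with $|E| = \la|Q|$ and $|G| = \tau|Q|$, and bound $\int_G\int_E |f(x,y)|dx\,dy$ from below by the claimed rearrangement double integral; taking the infimum over $E,G$ at the end then delivers the proposition.

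First, for each fixed $y \in G$, the function $x \mapsto |f(x,y)|\chi_Q(x)$ is integrable on $Q$, and (\ref{infpr}) yields
$$\int_E |f(x,y)|dx \ge \inf_{E'\subset Q:|E'|=\la|Q|}\int_{E'}|f(x,y)|dx = \int_{(1-\la)|Q|}^{|Q|} (f\chi_{Q\times Q})^*(t,y)\,dt,$$
because, for $y\in Q$, the one-variable rearrangement in $x$ of $(f\chi_{Q\times Q})(x,y)=f(x,y)\chi_Q(x)$ is exactly $(f\chi_{Q\times Q})^*(t,y)$ as defined in Section \ref{ss32}. Integrating this over $y\in G$ and invoking Tonelli on the non-negative integrand gives
$$\int_G\int_E |f(x,y)|dx\,dy \ge \int_{(1-\la)|Q|}^{|Q|}\int_G (f\chi_{Q\times Q})^*(t,y)\,dy\,dt.$$

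Second, for each fixed $t\in((1-\la)|Q|,|Q|)$, note that the function $y \mapsto (f\chi_{Q\times Q})^*(t,y)$ is supported in $Q$: for $y\notin Q$ we have $(f\chi_{Q\times Q})(\cdot,y)\equiv 0$, so its rearrangement in $x$ is identically zero. Consequently, applying (\ref{infpr}) once more—now in the $y$-variable, to the non-negative function $y\mapsto (f\chi_{Q\times Q})^*(t,y)$—we obtain
$$\int_G (f\chi_{Q\times Q})^*(t,y)\,dy \ge \int_{(1-\tau)|Q|}^{|Q|}(f\chi_{Q\times Q})^*(t,s)\,ds,$$
where the right-hand side is precisely the iterated rearrangement of Section \ref{ss32}. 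Plugging this back and combining yields the desired lower bound for every admissible $E,G$.

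The only nontrivial point—and what I expect to be the one step needing care rather than the main obstacle—is making sure the iterated rearrangement is interpreted correctly at the second application of (\ref{infpr}): one must check that $(f\chi_{Q\times Q})^*(t,\cdot)$ vanishes outside $Q$, so that its rearrangement on $Q$ coincides with its rearrangement on ${\mathbb R}^n$, which by definition is $(f\chi_{Q\times Q})^*(t,s)$. The joint measurability of $(t,y)\mapsto (f\chi_{Q\times Q})^*(t,y)$, needed to apply Tonelli, is standard for iterated rearrangements (cf.\ \cite{Bl}). Beyond this book-keeping, the proof is a straightforward nested application of the one-variable infimum formula, and no genuine difficulty arises.
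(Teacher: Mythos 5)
Your proof is correct and follows essentially the same route as the paper's: two applications of \eqref{infpr} (one in each variable) linked by Fubini/Tonelli, with the only cosmetic difference being that you run the chain of inequalities from the right-hand side (fixing $E,G$ and bounding below) rather than from the left-hand side as the paper does. The supporting observations you flag—that $(f\chi_{Q\times Q})^*(t,\cdot)$ vanishes off $Q$ and that the iterated rearrangement is jointly measurable—are the right points to check and are handled correctly.
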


\begin{proof} This is just a combination of (\ref{infpr}) with Fubini's theorem:
\begin{eqnarray*}
&&\int_{(1-\la)|Q|}^{|Q|}\int_{(1-\tau)|Q|}^{|Q|}(f\chi_{Q\times Q})^*(t,s)dsdt\\
&&=\int_{(1-\la)|Q|}^{|Q|}\Big(\inf_{G\subset Q:|G|=\tau|Q|}\int_G(f\chi_{Q\times Q})^*(t,y)dy\Big)dt\\
&&\le \inf_{G\subset Q:|G|=\tau|Q|}\int_{(1-\la)|Q|}^{|Q|}\int_G(f\chi_{Q\times Q})^*(t,y)dydt\\
&&=\inf_{G\subset Q:|G|=\tau|Q|}\int_G\int_{(1-\la)|Q|}^{|Q|}(f\chi_{Q\times Q})^*(t,y)dtdy\\
&&=\inf_{G\subset Q:|G|=\tau|Q|}\int_G\Big(\inf_{E\subset Q:|E|=\la|Q|}\int_E|f(x,y)|dx\Big)dy\\
&&\le \inf_{E\subset Q:|E|=\la|Q| \atop G\subset Q:|G|=\tau|Q|}\int_E\int_G|f(x,y)|dydx.
\end{eqnarray*}
\end{proof}

\begin{remark}\label{mbstr} Observe that, contrary to (\ref{infpr}), we have an inequality in Proposition \ref{itrpr}. A simple example shows that it can be strict. Indeed, for $(x,y)\in {\mathbb R}^2$
define $f(x,y):=\chi_E(x,y)$, where $E$ is the union of two squares $(0,1)\times (0,1)$ and $(1,2)\times (1,2)$. Then, for all $y\in (0,1)\cup (1,2)$,
$$
f^*(t,y)=
\begin{cases}
1,&t\in (0,1)\\
0,&t\in [1,2),
\end{cases}
$$
and, therefore,
$$
f^*(t,s)=
\begin{cases}
1,&t\in (0,1), s\in(0,2)\\
0,&t\in [1,2), s\in(0,2).
\end{cases}
$$
\end{remark}
From this, for all $\la\in (0,1/2]$ and $\tau\in (0,1)$,
$$\int_{2(1-\la)}^2\int_{2(1-\tau)}^2(f\chi_{(0,2)\times(0,2)})^*(t,s)dsdt=0.$$
However, if $\tau\in (1/2,1)$, then for all $\la\in (0,1)$,
$$
\inf_{E\subset (0,2):|E|=2\la \atop G\subset (0,2):|G|=2\tau}\int_E\int_G|f(x,y)|dydx>0.
$$

\section{Intermediate characterizations of the class ${\mathcal P}$}\label{ss4}
In this section we obtain several characterizations of the class ${\mathcal P}$. They will play an important role in the proof of Theorem \ref{mr}, and also they are of some intrinsic interest.

\subsection{The $S_{\infty}$ condition}\label{ss41}
The following definition looks rather long but it expresses the key property of $p(\cdot)$ needed to characterize the class~${\mathcal P}$.

\begin{definition}\label{cond}
We say that $p(\cdot)$ satisfies the $S_{\infty}$ condition if there exist $r>1$, $0<\la_0<1$ and $C>0$ such that for every $0<\la<\la_0$, for any finite family of pairwise disjoint cubes ${\mathcal F}$
and for every sequence $\{\a_Q\}_{Q\in {\mathcal F}}$ with $0<\a_Q<1$ the following property holds: for any measurable subsets $E_Q\subset Q$ with $|E_Q|=\la|Q|$, $Q\in {\mathcal F}$, we have
\begin{equation}\label{scond}
\sum_{Q\in {\mathcal F}}\int_{E_Q}\a_Q^{p(x)}dx\le 1\Rightarrow \sum_{Q\in {\mathcal F}}\int_Q(\la^{1/r}\a_Q)^{p(x)}dx\le C.
\end{equation}
\end{definition}

In the proof of the next theorem we will use the following duality relation (see \cite[Th. 3.2.13]{DHHR11}): if $1<p_-\le p_+<\infty$, then
\begin{equation}\label{dualr}
\frac{1}{2}\|f\|_{L^{p(\cdot)}}\le \sup_{g:\|g\|_{L^{p'(\cdot)}}=1}\int_{{\mathbb R}^n}|f(x)g(x)|dx\le 2\|f\|_{L^{p(\cdot)}}.
\end{equation}

We will also use the fact that if $M$ is bounded on $L^{p(\cdot)}$, then there exists $r>1$ such that the operator $M_r$ defined by $M_rf:=M(|f|^r)^{1/r}$ is also bounded on $L^{p(\cdot)}$.
In the context of variable $L^p$ spaces this was shown by Diening \cite{D1}. A different proof, in a more general context of Banach function spaces, can be found in \cite{LP}.

\begin{theorem}\label{char1} Assume that $1<p_-\le p_+<\infty$. Then $p(\cdot)\in {\mathcal P}$ if and only if $p(\cdot)\in A_{p(\cdot)}\cap S_{\infty}$.
\end{theorem}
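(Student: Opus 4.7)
The plan is to prove both implications of Theorem \ref{char1}; the inclusion $\mathcal{P} \subset A_{p(\cdot)}$ has already been noted in the introduction, so only the role of $S_\infty$ requires work. For the forward direction $\mathcal{P} \Rightarrow S_\infty$ the decisive input is Diening's self-improvement quoted just before the theorem: since $M$ is bounded on $L^{p(\cdot)}$, there exists $r>1$ such that $M_rf := M(|f|^r)^{1/r}$ is bounded on $L^{p(\cdot)}$, with some constant $C_0$. Given $\mathcal{F}$, a sequence $\{\alpha_Q\}$ with $0<\alpha_Q<1$, and $E_Q \subset Q$ with $|E_Q|=\lambda|Q|$ for which the hypothesis in (\ref{scond}) holds, I would apply $M_r$ to the test function $f := \sum_{Q}\alpha_Q\chi_{E_Q}$: by disjointness of the $E_Q$'s one has $\int f^{p(x)}\,dx = \sum_Q\int_{E_Q}\alpha_Q^{p(x)}\,dx \le 1$ and so $\|f\|_{L^{p(\cdot)}}\le 1$, while for $x\in Q$ the estimate $M_rf(x)\ge (|Q|^{-1}\int_Q f^r)^{1/r} = \lambda^{1/r}\alpha_Q$ yields $\lambda^{1/r}\sum_Q\alpha_Q\chi_Q \le M_rf$ pointwise. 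Passing to $L^{p(\cdot)}$-norms and converting back to the modular gives $\sum_Q\int_Q(\lambda^{1/r}\alpha_Q)^{p(x)}\,dx \le \max(1,C_0)^{p_+}$, which is precisely the conclusion of $S_\infty$.

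For the reverse direction $A_{p(\cdot)}\cap S_\infty \Rightarrow \mathcal{P}$, Theorem \ref{Di} reduces the task to a uniform bound $\|A_{\mathcal{F}}f\|_{L^{p(\cdot)}}\le C\|f\|_{L^{p(\cdot)}}$ over finite disjoint families. After normalizing $\int|f|^{p(x)}\,dx \le 1$, I would decompose $f$ on each cube by a geometric cascade of level sets: set $\alpha_Q^{(k)} := (f\chi_Q)^*(\lambda^{k+1}|Q|)$ and, using Remark \ref{bsh}, pick nested $E_Q^{(0)}\supset E_Q^{(1)}\supset\cdots$ with $|E_Q^{(k)}| = \lambda^{k+1}|Q|$ so that $|f| \le \alpha_Q^{(k+1)}$ on the shell $E_Q^{(k)}\setminus E_Q^{(k+1)}$ and $|f|\le \alpha_Q^{(0)}$ on $Q\setminus E_Q^{(0)}$. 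Summing the shell contributions yields $\langle|f|\rangle_Q \le (1-\lambda)\sum_{k\ge 0}\lambda^k\alpha_Q^{(k)}$, and the triangle inequality gives
\[
\|A_{\mathcal{F}}f\|_{L^{p(\cdot)}} \le (1-\lambda)\sum_{k\ge 0}\lambda^k\Bigl\|\sum_{Q\in\mathcal{F}}\alpha_Q^{(k)}\chi_Q\Bigr\|_{L^{p(\cdot)}}.
\]
For each $k$, I would apply $S_\infty$ with parameter $\lambda^{k+1}$ (permitted since $\lambda^{k+1}<\lambda_0$) to $\{\alpha_Q^{(k)},E_Q^{(k)}\}$: as $(\alpha_Q^{(k)})^{p(x)}\le|f|^{p(x)}$ on $E_Q^{(k)}$ and the $Q$'s are disjoint, the hypothesis $\sum_Q\int_{E_Q^{(k)}}(\alpha_Q^{(k)})^{p(x)}\,dx\le \int|f|^{p(x)}\,dx\le 1$ is inherited, and the conclusion converts to $\bigl\|\sum_Q\alpha_Q^{(k)}\chi_Q\bigr\|_{L^{p(\cdot)}}\le C_1\lambda^{-(k+1)/r}$. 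The outer sum becomes the geometric series $\sum_k \lambda^{k(1-1/r)}$, which converges precisely because $r>1$---this is the whole point of the self-improvement built into the definition of $S_\infty$.

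The chief technical obstacle is the constraint $\alpha_Q<1$ in Definition \ref{cond}, which may fail on cubes carrying a large portion of $\{|f|>1\}$. To bridge this gap I would split $f = f\chi_{\{|f|\le 1/2\}} + f\chi_{\{|f|>1/2\}}$: the cascade above applies verbatim to the first summand, while for the second summand (whose support has finite measure controlled by the modular bound) the inequality $\langle|f|\rangle_Q\|\chi_Q\|_{L^{p(\cdot)}} \le C\|f\chi_Q\|_{L^{p(\cdot)}}$ afforded by $A_{p(\cdot)}$, combined with the duality (\ref{dualr}) and a direct sparse-type estimate, is used to absorb its contribution. This is the step at which the $A_{p(\cdot)}$ hypothesis---beyond its trivial necessity---enters the proof essentially, and where careful constant-chasing is required so that the final bound is uniform in $\mathcal{F}$.
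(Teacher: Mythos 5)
Your forward direction coincides with the paper's: apply the self-improved operator $M_r$ to the test function $\sum_Q \alpha_Q\chi_{E_Q}$ and translate between modular and norm. Your treatment of the small part $f\chi_{\{|f|\le 1/2\}}$ is a correct alternative to the paper's: where the paper applies $S_\infty$ at a single level $\lambda$ with $\alpha_Q=(f_2\chi_Q)^*(\lambda|Q|)$ and then recovers $\langle f_2\rangle_Q$ by dualizing via (\ref{dualr}) and integrating the resulting bound $C\lambda^{-1/r}$ over $\lambda\in(0,\lambda_0)$ (using (\ref{rearpr})), you discretize: a nested cascade $E_Q^{(k)}$ at levels $\lambda^{k+1}$ plus the triangle inequality, with the geometric series $\sum_k\lambda^{k(1-1/r)}$ playing the role of $\int_0^{\lambda_0}\lambda^{-1/r}d\lambda$. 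Both hinge on $r>1$ in exactly the same way; yours avoids duality, the paper's avoids the (routine but fussy) construction of the nested sets.

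The genuine gap is the large part $f_1=f\chi_{\{|f|>1/2\}}$. You propose to absorb it using the cube-wise inequality $\langle|f|\rangle_Q\|\chi_Q\|_{L^{p(\cdot)}}\le C\|f\chi_Q\|_{L^{p(\cdot)}}$ from $A_{p(\cdot)}$, duality, and an unspecified ``direct sparse-type estimate.'' This cannot work as stated: if the cube-by-cube $A_{p(\cdot)}$ estimate could be summed over a disjoint family to control $\|A_{\mathcal F}f_1\|_{L^{p(\cdot)}}$, the same argument would apply to all of $f$ and would show $A_{p(\cdot)}\Rightarrow{\mathcal P}$, contradicting the counterexamples cited in the introduction. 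Concretely, dualizing gives $\sum_Q\langle f_1\rangle_Q\langle|g|\rangle_Q|Q|\le C\sum_Q\|f_1\chi_Q\|_{L^{p(\cdot)}}\|g\chi_Q\|_{L^{p'(\cdot)}}$, and since $\|f_1\chi_Q\|_{L^{p(\cdot)}}\lesssim m_Q^{1/p_+}$ and $\|g\chi_Q\|_{L^{p'(\cdot)}}\lesssim n_Q^{1/(p')_+}$ with $1/p_++1/(p')_+<1$ whenever $p_-<p_+$, H\"older leaves a factor growing with the number of cubes; the finite measure of $\{|f|>1/2\}$ does not rescue this. The paper closes exactly this step by the estimate (\ref{wws}), $\|Mf_1\|_{L^{p(\cdot)}}\le C$, whose proof splits over $\{Mf_1>1\}$ and $\{Mf_1\le1\}$: the second set is handled by $(Mf_1)^{p(x)}\le(Mf_1)^{p_-}$ and the $L^{p_-}$-boundedness of $M$, while the first, which has measure $\le C$ by the weak $(1,1)$ inequality and $f_1\le f_1^{p(\cdot)}$, requires the nontrivial localized bound (\ref{tams}) imported from \cite{L2} (valid under $A_{p(\cdot)}$ on sets of controlled $\|\chi_E\|_{L^{p(\cdot)}}$). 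Some input of this strength --- or the $LH_0$ route mentioned in Remark \ref{repap} --- is indispensable here, and your sketch does not supply it.
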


\begin{proof} Let us start with a simple direction ${\mathcal P}\Rightarrow A_{p(\cdot)}\cap S_{\infty}$. It was discussed in the Introduction that ${\mathcal P}\Rightarrow A_{p(\cdot)}$. Let us show that
${\mathcal P}\Rightarrow~S_{\infty}$.

Assume that $p(\cdot)\in {\mathcal P}$. Then there exists $r>1$ such that $M_r$ is bounded on $L^{p(\cdot)}$. From this,
for every finite family ${\mathcal F}$ of pairwise disjoint cubes we obtain
$$\Big\|\sum_{Q\in {\mathcal F}}\Big(\frac{1}{|Q|}\int_Q|f|^r\Big)^{1/r}\chi_Q\Big\|_{L^{p(\cdot)}}\le C\|f\|_{L^{p(\cdot)}}.$$
Setting here $f:=\sum_{Q\in {\mathcal F}}\a_Q\chi_{E_Q}$, where $0<\a_Q<1$ and $E_Q\subset Q$ with $|E_Q|=\la|Q|$, yields
$$\Big\|\sum_{Q\in {\mathcal F}}(\la^{1/r}\a_Q)\chi_Q\Big\|_{L^{p(\cdot)}}\le C\Big\|\sum_{Q\in {\mathcal F}}\a_Q\chi_{E_Q}\Big\|_{L^{p(\cdot)}}.$$
From this,
$$\Big\|\sum_{Q\in {\mathcal F}}\a_Q\chi_{E_Q}\Big\|_{L^{p(\cdot)}}\le 1\Rightarrow \Big\|\sum_{Q\in {\mathcal F}}(\la^{1/r}\a_Q)\chi_Q\Big\|_{L^{p(\cdot)}}\le C,$$
which, by the definition of the $L^{p(\cdot)}$-norm, is equivalent to (\ref{scond}).

Turn to the converse direction. Assume that $p(\cdot)\in A_{p(\cdot)}\cap S_{\infty}$. In order to show that $p(\cdot)\in {\mathcal P}$, by Diening's Theorem \ref{Di} and by the standard limiting argument, it suffices to prove that
for every finite family of pairwise disjoint cubes ${\mathcal F}$ and for every $f\ge 0$,
$$\Big\|\sum_{Q\in {\mathcal F}}\langle f\rangle_Q\chi_Q\Big\|_{L^p(\cdot)}\le C\|f\|_{L^p(\cdot)},$$
which, in turn, is equivalent to
\begin{equation}\label{suftop}
\|f\|_{L^p(\cdot)}\le 1\Rightarrow \Big\|\sum_{Q\in {\mathcal F}}\langle f\rangle_Q\chi_Q\Big\|_{L^p(\cdot)}\le C.
\end{equation}

Fix an $f$ with $\|f\|_{L^p(\cdot)}\le 1$. Split $f$ in the standard way, $f=f_1+f_2$, where $f_1:=f\chi_{\{f\ge 1\}}$.
For the big part $f_1$ we use a trivial estimate
$$\Big\|\sum_{Q\in {\mathcal F}}\langle f_1\rangle_Q\chi_Q\Big\|_{L^p(\cdot)}\le \|M(f_1)\|_{L^p(\cdot)},$$
and we will show that
\begin{equation}\label{wws}
\|M(f_1)\|_{L^p(\cdot)}\le C.
\end{equation}

In order to handle the left hand of (\ref{wws}), we invoke the following fact proved in \cite{L2} (see \cite[Rem. 3.3]{L2}): if $p(\cdot)\in A_{p(\cdot)}$, then for every measurable set $E\subset {\mathbb R}^n$ of finite measure,
\begin{equation}\label{tams}
\|(Mf)\chi_E\|_{L^{p(\cdot)}}\le
c_{p(\cdot),n}c(E)\|f\|_{L^{p(\cdot)}},
\end{equation}
where $c(E):=\max\big(1,\|\chi_E\|_{L^{p(\cdot)}}^{1/(p'(\cdot))_-}\big)$.

First, we have
\begin{equation}\label{fwh}
\|M(f_1)\|_{L^{p(\cdot)}}\le \|M(f_1)\chi_{\{Mf_1>1\}}\|_{L^{p(\cdot)}}+
\|M(f_1)\chi_{\{Mf_1\le 1\}}\|_{L^{p(\cdot)}}.
\end{equation}
By the weak type $(1,1)$ of $M$ and using the fact that either $f_1=0$ or $f_1\ge 1$, we obtain
$$|\{Mf_1>1\}|\le C\|f_1\|_{L^1}\le C\int_{{\mathbb R}^n}f(x)^{p(x)}dx\le C.$$
Hence, $C(E)\le C$, where $E=\{Mf_1>1\}$, and by (\ref{tams}),
\begin{equation}\label{fmp}
\|M(f_1)\chi_{\{Mf_1>1\}}\|_{L^{p(\cdot)}}\le C\|f_1\|_{L^{p(\cdot)}}\le C.
\end{equation}

Turn to the second term in (\ref{fwh}). Using the $L^p$ boundedness of $M$ for $p>1$, we obtain
\begin{eqnarray*}
\int_{\{Mf_1\le 1\}}(Mf_1)^{p(x)}dx&\le& \int_{\{Mf_1\le
1\}}(Mf_1)^{p_-}dx\\
&\le& C\int_{{\mathbb R}^n}f_1^{p_-}dx\le C\int_{{\mathbb
R}^n}f^{p(x)}dx\le C.
\end{eqnarray*}
Therefore,
$$\|M(f_1)\chi_{\{Mf_1\le 1\}}\|_{L^{p(\cdot)}}\le C,$$
which, along with (\ref{fmp}), proves (\ref{wws}).

In order to establish (\ref{suftop}), it remains to show that
\begin{equation}\label{itrts}
\Big\|\sum_{Q\in {\mathcal F}}\langle f_2\rangle_Q\chi_Q\Big\|_{L^p(\cdot)}\le C.
\end{equation}

Given a cube $Q\in {\mathcal F}$, set $\a_Q:=(f_2\chi_Q)^*(\la|Q|)$, and also choose a subset
$$E_Q\subset \{x\in Q:f_2(x)\ge (f_2\chi_Q)^*(\la|Q|)\}$$
such that $|E_Q|=\la|Q|$. Then
$$\sum_{Q\in {\mathcal F}}\int_{E_Q}\a_Q^{p(x)}dx\le \sum_{Q\in {\mathcal F}}\int_Q(f_2)^{p(x)}dx\le \int_{{\mathbb R}^n}f(x)^{p(x)}dx\le 1.$$
Therefore, by (\ref{scond}), we obtain that
$$\sum_{Q\in {\mathcal F}}\int_Q\big(\la^{1/r}(f_2\chi_Q)^*(\la|Q|)\big)^{p(x)}dx\le C,$$
which is equivalent to
$$\Big\|\sum_{Q\in {\mathcal F}}(f_2\chi_Q)^*(\la|Q|)\chi_Q\Big\|_{L^{p(\cdot)}}\le \frac{C}{\la^{1/r}}\quad(0<\la<\la_0).$$

From this, by (\ref{dualr}), for every $g$ with $\|g\|_{L^{p'(\cdot)}}=1$,
$$\sum_{Q\in {\mathcal F}}(f_2\chi_Q)^*(\la|Q|)\int_Q|g|\le \frac{C}{\la^{1/r}}.$$
Integrating both sides over $\la\in (0,\la_0)$ and using (\ref{rearpr}) yield
$$\sum_{Q\in {\mathcal F}}\langle f_2\rangle_Q\int_Q|g|\le C.$$
Applying (\ref{dualr}) again, we obtain (\ref{itrts}), and therefore the proof is complete.
\end{proof}



\begin{remark}\label{eqform} Since (\ref{scond}) holds for any  measurable subsets $E_Q\subset Q$ with $|E_Q|=\la|Q|$, the $S_{\infty}$ condition can be written in the following equivalent form. Namely, under the same assumptions, (\ref{scond}) can be replaced by
$$
\sum_{Q\in {\mathcal F}}\inf_{E\subset Q:|E|=\la|Q|}\int_{E}\a_Q^{p(x)}dx\le 1\Rightarrow \sum_{Q\in {\mathcal F}}\int_Q(\la^{1/r}\a_Q)^{p(x)}dx\le C.
$$
However, the left-hand side here can be computed precisely. Indeed, since $0<\a_Q<1$, the function $\f(t):=\a_Q^t$ is decreasing, and hence, combining (\ref{infpr}) with the second part of Lemma \ref{compos} yields
\begin{eqnarray*}
&&\inf_{E\subset Q:|E|=\la|Q|}\int_{E}\a_Q^{p(x)}dx=\int_{(1-\la)|Q|}^{|Q|}(\a_Q^{p(\cdot)}\chi_Q)^*(s)ds\\
&&=\int_{(1-\la)|Q|}^{|Q|}\a_Q^{(p\chi_Q)^*(|Q|-s)}ds=|Q|\int_0^{\la}\a_Q^{(p\chi_Q)^*(t|Q|)}dt.
\end{eqnarray*}

Therefore, the $S_{\infty}$ condition is equivalent to saying that
\begin{equation}\label{eqbs}
\sum_{Q\in {\mathcal F}}|Q|\int_0^{\la}\a_Q^{(p\chi_Q)^*(t|Q|)}dt\le 1\Rightarrow \sum_{Q\in {\mathcal F}}\int_Q(\la^{1/r}\a_Q)^{p(x)}dx\le C.
\end{equation}
\end{remark}

\subsection{The $S_{\infty}'$ condition}\label{ss42}
Our next goal is to simplify the previous characterization by getting rid of sequences $\{\a_Q\}$. We will do it in two steps. In the first step we replace the integrals on both sides of (\ref{eqbs})
by expressions involving rearrangements.

\begin{definition}\label{cond1}
We say that $p(\cdot)$ satisfies the $S'_{\infty}$ condition if there exist $r>1$, $0<\la_0,\tau_0<1$ and $C>0$ such that for every $0<\la<\la_0$ and $0<\tau<\tau_0$, for any finite family of pairwise disjoint cubes ${\mathcal F}$
and for every sequence $\{\a_Q\}_{Q\in {\mathcal F}}$ with $0<\a_Q<1$ the following property holds:
$$
\la\sum_{Q\in {\mathcal F}}|Q|\a_Q^{(p\chi_Q)^*(\la|Q|)}\le 1\Rightarrow \tau^{1/r}\sum_{Q\in {\mathcal F}}|Q|(\la^{1/r}\a_Q)^{(p\chi_Q)^*((1-\tau)|Q|)}\le C.
$$
\end{definition}

The proof of the next result will be based on the following theorem obtained by Diening \cite{D1}.

\begin{theorem}\label{thd} Assume that $1<p_-\le p_+<\infty$, and let $p(\cdot)\in {\mathcal P}$. Then there exist $q>1$ and $C>0$ such that for every family of pairwise disjoint cubes ${\mathcal F}$ and for
every non-negative sequence $\{t_Q\}_{Q\in {\mathcal F}}$,
$$\sum_{Q\in {\mathcal F}}\int_Qt_Q^{p(x)}dx\le 1\Rightarrow \sum_{Q\in {\mathcal F}}|Q|\left(\frac{1}{|Q|}\int_Qt_Q^{qp(x)}dx\right)^{1/q}\le C.$$
\end{theorem}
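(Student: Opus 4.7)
The plan is to invoke Diening's self-improvement of $\mathcal{P}$ and then follow a strategy parallel to the one used later in the proof of Theorem~\ref{char1}. By the self-improvement recalled just before Theorem~\ref{char1}, there exists $q>1$ with $M_q f:=M(|f|^q)^{1/q}$ bounded on $L^{p(\cdot)}$, equivalently $M$ bounded on $L^{p(\cdot)/q}$; I would shrink $q$, if necessary, so that $1<q<p_-$, and take this $q$ as the exponent in the conclusion.

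Set $g(x):=\sum_{Q\in\mathcal{F}} t_Q^{p(x)}\chi_Q(x)$, so the hypothesis becomes $\|g\|_{L^1}\le 1$. Using disjointness of $\mathcal{F}$, the conclusion can be written as
$$\int_{\bigcup\mathcal{F}}\bigl(A_{\mathcal{F}}(g^q)(x)\bigr)^{1/q}\,dx\le C,$$
with $A_{\mathcal{F}}$ the averaging operator from Theorem~\ref{Di}. Since pointwise $A_{\mathcal{F}}(g^q)^{1/q}\le M_qg$, it suffices to control $\int_{\bigcup\mathcal{F}} M_qg\,dx$. I would then split $g=g_1+g_2$ with $g_1:=g\chi_{\{g\ge 1\}}$ and $g_2:=g\chi_{\{g<1\}}$, in analogy with the $f=f_1+f_2$ decomposition used in the proof of Theorem~\ref{char1}.

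For the small-values piece $g_2\le 1$: since $p(x)/q>1$, we have $g_2^{p(x)/q}\le g_2$, whence $\|g_2\|_{L^{p(\cdot)/q}}\le 1$, and the $L^{p(\cdot)/q}$-boundedness of $M$ yields $\|M_qg_2\|_{L^{p(\cdot)}}\le C$. Combining this with the $A_{p(\cdot)}$-condition (which holds because $\mathcal{P}\Rightarrow A_{p(\cdot)}$) via the estimate (\ref{tams}) used in the proof of Theorem~\ref{char1}, and splitting the integration according to $\{M_qg_2>1\}$ versus $\{M_qg_2\le 1\}$ exactly as in that proof, one converts the $L^{p(\cdot)}$-bound into the desired $L^1$-bound over $\bigcup\mathcal{F}$.

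The main obstacle I foresee is the big-values piece $g_1$: the constraint $\int g\le 1$ does not directly place $g_1$ into any $L^{p(\cdot)/q}$-type space, since $g_1^{p(x)/q}$ may be much larger than $g_1$ pointwise. To handle it I would use that $|\{g\ge 1\}|\le 1$ by Chebyshev, together with a level-set / Rubio de Francia-type iteration adapted to the variable-exponent setting. This is the delicate core of Diening's original argument in~\cite{D1}, and I would expect to follow that template closely rather than attempt a substantially new approach; without the self-improvement $q>1$, the analogue with $q=1$ would be a reverse Hölder-type inequality that generally fails under mere $L^{p(\cdot)}$-boundedness of $M$, which is what makes the statement nontrivial.
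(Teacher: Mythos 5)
The paper does not actually prove Theorem \ref{thd}: it is imported verbatim from Diening \cite{D1} and used as a black box, just like Theorem \ref{Di}. So the question is only whether your sketch would stand on its own, and it would not. Your reformulation of the conclusion as $\int_{\bigcup\mathcal F}\big(A_{\mathcal F}(g^q)\big)^{1/q}dx\le C$ with $g:=\sum_Q t_Q^{p(\cdot)}\chi_Q$ is correct, but the very next step --- ``since $A_{\mathcal F}(g^q)^{1/q}\le M_qg$ pointwise, it suffices to control $\int_{\bigcup\mathcal F}M_qg\,dx$'' --- is fatal, because that integral is generically infinite. Take $n=1$, $p\equiv 2$, one cube $Q_0=[0,1]$ with $t_{Q_0}=1$ and all other $t_Q=0$, and let $\mathcal F$ consist of the unit intervals tiling $[-R,R]$. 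Then $g=\chi_{[0,1]}$, the hypothesis holds, and the left-hand side of the conclusion equals $1$; but $M_qg(x)\gtrsim |x|^{-1/q}$ with $1/q<1$, so $\int_{\bigcup\mathcal F}M_qg\to\infty$ as $R\to\infty$. The theorem is an averaged reverse H\"older inequality: on each cube Jensen gives $\langle t_Q^{qp(\cdot)}\rangle_Q^{1/q}\ge\langle t_Q^{p(\cdot)}\rangle_Q$, and the claim is that the opposite inequality holds after summing over the family. That content lives entirely in the averaging structure $A_{\mathcal F}$ (note $A_{\mathcal F}(g^q)$ vanishes on cubes where $t_Q=0$, while $M_qg$ does not), and it is destroyed the moment you majorize by $M_q$ and integrate in $L^1$ over an arbitrary, possibly infinite-measure, union of cubes.

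The same obstruction sinks the $g_2$ step: a bound $\|M_qg_2\|_{L^{p(\cdot)}}\le C$ cannot be ``converted into an $L^1$-bound over $\bigcup\mathcal F$''. The splitting in the proof of Theorem \ref{char1} produces a \emph{modular} bound, $\int_{\{Mf_1\le 1\}}(Mf_1)^{p(x)}dx\le C$, not a bound on $\int_{\{Mf_1\le1\}}Mf_1\,dx$; on the set where the maximal function is $\le 1$ one has $(M_qg_2)^{p(x)}\le M_qg_2$, which is the wrong direction for you, and $\{M_qg_2\le1\}\cap\bigcup\mathcal F$ can have infinite measure. Two further remarks. With $q=1$ the conclusion is literally the hypothesis, so it holds trivially; the difficulty is therefore not a failure of the $q=1$ case but the passage to some $q>1$ in the presence of oscillation of $p$ inside each cube --- and this affects the ``small'' piece $g_2$ just as much as $g_1$. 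Finally, the part you yourself identify as the delicate core is deferred to ``Diening's template'' in \cite{D1}; since that core is precisely the statement being proved, the proposal as written is a citation rather than a proof.
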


\begin{theorem}\label{char2} Assume that $1<p_-\le p_+<\infty$. Then $p(\cdot)\in {\mathcal P}$ if and only if $p(\cdot)\in A_{p(\cdot)}\cap S'_{\infty}$.
\end{theorem}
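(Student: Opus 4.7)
My plan is to derive Theorem~\ref{char2} from Theorem~\ref{char1} by establishing the equivalence $S_\infty\Leftrightarrow S'_\infty$ in the presence of $A_{p(\cdot)}$. The bridge between the two is the rearrangement identity
$$\int_Q c^{p(x)}\,dx=|Q|\int_0^1 c^{(p\chi_Q)^*(v|Q|)}\,dv\qquad(c\in[0,1]),$$
an immediate consequence of Lemma~\ref{compos}(ii) applied to the strongly decreasing function $\f(t)=c^t$. For $c\in(0,1)$, since $(p\chi_Q)^*(v|Q|)$ is non-increasing in $v$, the integrand on the right is non-decreasing in $v$.

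For the direction $A_{p(\cdot)}\cap S'_\infty\Rightarrow\mathcal{P}$, by Theorem~\ref{char1} and Remark~\ref{eqform} it suffices to verify
$$\sum_Q|Q|\int_0^\la\a_Q^{(p\chi_Q)^*(t|Q|)}\,dt\le 1\;\Rightarrow\;\sum_Q\int_Q(\la^{1/r}\a_Q)^{p(x)}\,dx\le C$$
for some $r>1$. Assuming the left-hand side, monotonicity of the integrand gives $\int_0^\la\a_Q^{(p\chi_Q)^*(t|Q|)}\,dt\ge(\la/2)\,\a_Q^{(p\chi_Q)^*((\la/2)|Q|)}$, so the hypothesis of $S'_\infty$ is satisfied with $\la/2$ in place of $\la$. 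Applying $S'_\infty$ with exponent $r'>1$ yields, for every $\tau\in(0,\tau_0)$,
$$\sum_Q|Q|\big((\la/2)^{1/r'}\a_Q\big)^{(p\chi_Q)^*((1-\tau)|Q|)}\le C_0\tau^{-1/r'}.$$
Expressing $\sum_Q\int_Q((\la/2)^{1/r'}\a_Q)^{p(x)}\,dx$ via the rearrangement identity as $\sum_Q|Q|\int_0^1((\la/2)^{1/r'}\a_Q)^{(p\chi_Q)^*((1-\tau)|Q|)}\,d\tau$ and splitting the inner integral at $\tau_0$, the piece on $(0,\tau_0)$ is controlled by integrating the displayed bound against $d\tau$---the weight $\tau^{-1/r'}$ being integrable precisely because $r'>1$---while the piece on $(\tau_0,1)$ is dominated by the value at $\tau=\tau_0$ (the integrand being non-increasing in $\tau$) combined with $S'_\infty$ at $\tau=\tau_0$. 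A cosmetic rescaling to replace $(\la/2)^{1/r'}$ by $\la^{1/r'}$ then produces $S_\infty$ with $r=r'$.

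For the direction $\mathcal{P}\Rightarrow S'_\infty$, I combine $S_\infty$ (with exponent $r_0>1$ from Theorem~\ref{char1}) with Diening's Theorem~\ref{thd}, which supplies some $q>1$. The hypothesis of $S'_\infty$ implies the hypothesis of $S_\infty$ via $\int_0^\la\a_Q^{(p\chi_Q)^*(t|Q|)}\,dt\le\la\,\a_Q^{(p\chi_Q)^*(\la|Q|)}$, so $\sum_Q\int_Q(\la^{1/r_0}\a_Q)^{p(x)}\,dx\le C$. A routine normalization allows Theorem~\ref{thd} to be applied to $\la^{1/r_0}\a_Q$ (after dividing by $C^{1/p_-}$), and undoing the normalization inside the $L^q$ average at cost only a multiplicative constant gives
$$\sum_Q|Q|\langle(\la^{1/r_0}\a_Q)^{qp(x)}\rangle_Q^{1/q}\le C''.$$
The rearrangement identity and the lower bound $\int_0^1 c^{q(p\chi_Q)^*(v|Q|)}\,dv\ge\tau\,c^{q(p\chi_Q)^*((1-\tau)|Q|)}$ (obtained by restricting to $[1-\tau,1]$ and using monotonicity), followed by taking $q$-th roots and summing, yield $\tau^{1/q}\sum_Q|Q|(\la^{1/r_0}\a_Q)^{(p\chi_Q)^*((1-\tau)|Q|)}\le C''$. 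Finally, $r':=\min(q,r_0)>1$ works in $S'_\infty$: since $\tau,\la\in(0,1)$ and the exponents of $p$ are $\ge p_->0$, we have $\tau^{1/r'}\le\tau^{1/q}$ and $(\la^{1/r'}\a_Q)^s\le(\la^{1/r_0}\a_Q)^s$ for all $s\ge 0$, so the conclusion of $S'_\infty$ follows.

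The main obstacle is the asymmetry in the factor of $\tau$: a direct monotonicity argument yields only a linear factor $\tau$ on the left of the target inequality, whereas $S'_\infty$ demands the strictly stronger $\tau^{1/r'}$ with $r'>1$. Theorem~\ref{thd} is precisely the tool that effects this upgrade, converting $\tau$ into $\tau^{1/q}$ via the $q$-th root of the $L^q$ self-improvement. Symmetrically, in the converse direction the choice $r'>1$ is what makes $\tau^{-1/r'}$ integrable near $0$, enabling the recovery of the full integral $\int_0^1$ on $Q$ from pointwise $S'_\infty$ estimates at $v=1-\tau$.
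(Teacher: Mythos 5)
Your proposal is correct and follows essentially the same route as the paper: both directions reduce to Theorem~\ref{char1} via Remark~\ref{eqform}, with Diening's Theorem~\ref{thd} supplying the crucial upgrade from $\tau$ to $\tau^{1/q}$, and the converse direction recovering the integral condition from the pointwise one by integrating $\tau^{-1/r'}$ near $0$. The only (cosmetic) difference is that you obtain $\tau^{1/q}\sum_Q|Q|c_Q^{(p\chi_Q)^*((1-\tau)|Q|)}\le C$ for \emph{all} $\tau$ directly from the integral lower bound on $[1-\tau,1]$, whereas the paper uses Chebyshev plus Lemma~\ref{compos}(ii) and then a limiting argument to pass from almost every $\tau$ to every $\tau$.
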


\begin{proof} Assume that $p(\cdot)\in {\mathcal P}$. Then $p(\cdot)\in A_{p(\cdot)}$. Let us show that $p(\cdot)\in S'_{\infty}$.

By Theorem \ref{cond} combined with Remark \ref{eqform} we have that (\ref{eqbs}) holds.
Observing that
$$\int_0^{\la}\a_Q^{(p\chi_Q)^*(t|Q|)}dt\le \la\a_Q^{(p\chi_Q)^*(\la|Q|)},$$
we obtain
\begin{equation}\label{prom}
\la\sum_{Q\in {\mathcal F}}|Q|\a_Q^{(p\chi_Q)^*(\la|Q|)}\le 1\Rightarrow \sum_{Q\in {\mathcal F}}\int_Q(\la^{1/r}\a_Q)^{p(x)}dx\le C.
\end{equation}

By Theorem \ref{thd}, there exists $q>1$ such that the right-hand side of (\ref{prom}) implies
\begin{equation}\label{rhsim}
\sum_{Q\in {\mathcal F}}|Q|\left(\frac{1}{|Q|}\int_Q(\la^{1/r}\a_Q)^{qp(x)}dx\right)^{1/q}\le C.
\end{equation}
Setting $g(x):=(\la^{1/r}\a_Q)^{p(x)}$, by Chebyshev's inequality, we obtain
$$\tau^{1/q}(g\chi_Q)^*(\tau|Q|)\le \left(\frac{1}{|Q|}\int_Qg(x)^qdx\right)^{1/q}.$$
Therefore, by the second part of Lemma \ref{compos} along with (\ref{rhsim}), for almost all $\tau\in (0,1)$,
$$
\tau^{1/q}\sum_{Q\in {\mathcal F}}|Q|(\la^{1/r}\a_Q)^{(p\chi_Q)^*((1-\tau)|Q|)}\le C.
$$
From this, for $\nu:=\min(q,r)$,
\begin{equation}\label{anin}
\tau^{1/\nu}\sum_{Q\in {\mathcal F}}|Q|(\la^{1/\nu}\a_Q)^{(p\chi_Q)^*((1-\tau)|Q|)}\le C,
\end{equation}
for almost all $\tau\in (0,1)$.

Now observe that (\ref{anin}) is easily extended to all $\tau\in (0,1)$. Indeed, let $E\subset (0,1)$ be set of full measure for which (\ref{anin}) is already established.
Assume that $\tau_0\not\in E$. Then there exists
a sequence $\e_k\to 0$ as $k\to \infty$ such that $\tau_0-\e_k\in E$. We obtain that
\begin{eqnarray*}
&&(\tau_0-\e_k)^{1/\nu}\sum_{Q\in {\mathcal F}}|Q|(\la^{1/\nu}\a_Q)^{(p\chi_Q)^*((1-\tau_0)|Q|)}\\
&&\le (\tau_0-\e_k)^{1/\nu}\sum_{Q\in {\mathcal F}}|Q|(\la^{1/\nu}\a_Q)^{(p\chi_Q)^*((1-\tau_0+\e_k)|Q|)}\le C.
\end{eqnarray*}
Letting here $k\to\infty$ yields (\ref{anin}) for $\tau=\tau_0$. Therefore, (\ref{anin}) holds for all $\tau\in (0,1)$.

Hence, (\ref{prom}) implies that
$$\la\sum_{Q\in {\mathcal F}}|Q|\a_Q^{(p\chi_Q)^*(\la|Q|)}\le 1\Rightarrow
\tau^{1/\nu}\sum_{Q\in {\mathcal F}}|Q|(\la^{1/\nu}\a_Q)^{(p\chi_Q)^*((1-\tau)|Q|)}\le C,$$
and thus we have obtained the $S_{\infty}'$ condition.

Suppose now that $p(\cdot)\in A_{p(\cdot)}\cap S'_{\infty}$. Integrating both sides of
$$
\sum_{Q\in {\mathcal F}}|Q|(\la^{1/r}\a_Q)^{(p\chi_Q)^*((1-\tau)|Q|)}\le \frac{C}{\tau^{1/r}}
$$
over $\tau\in (0,\tau_0)$ and using (\ref{rearpr}) yield
$$
\sum_{Q\in {\mathcal F}}\int_Q(\la^{1/r}\a_Q)^{p(x)}dx\le C.
$$
Therefore, the $S_{\infty}'$ condition implies (\ref{prom}). Now observe that
$$\frac{\la}{2}\a_Q^{(p\chi_Q)^*(\la|Q|/2)}\le \int_{\la/2}^{\la}\a_Q^{(p\chi_Q)^*(t|Q|)}dt\le \int_0^{\la}\a_Q^{(p\chi_Q)^*(t|Q|)}dt.$$
It follows from this that (\ref{prom}) implies (\ref{eqbs}). Hence, $S_{\infty}'\Rightarrow S_{\infty}$, which, by Theorem \ref{cond}, proves that
$A_{p(\cdot)}\cap S'_{\infty}\Rightarrow {\mathcal P}$.
\end{proof}

Now, a simple comparison of two power functions allows us to get rid of sequences $\{\a_Q\}$ in the $S_{\infty}'$ condition.
We start with a preliminary proposition.

\begin{prop}\label{chareq} Let $K\ge 1$, $1<r\le p_-$, and $0<\la,\tau<1/2$. Given a cube $Q$, let $t_Q$ be the maximal root of the equation
\begin{equation}\label{impeq}
\tau^{1/r}(\la^{1/r}t)^{(p\chi_Q)^*((1-\tau)|Q|)}=K\la t^{(p\chi_Q)^*(\la|Q|)}.
\end{equation}
Then, for all $t\in (0,1)$,
\begin{eqnarray}
\tau^{1/r}(\la^{1/r}t)^{(p\chi_Q)^*((1-\tau)|Q|)}&\le& \tau^{1/r}(\la^{1/r}t_Q)^{(p\chi_Q)^*((1-\tau)|Q|)}\label{impineq}\\
&+&K\la t^{(p\chi_Q)^*(\la|Q|)}.\nonumber
\end{eqnarray}
\end{prop}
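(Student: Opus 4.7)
The plan is a short case analysis, comparing the two sides of (\ref{impeq}) as functions of $t$. Set
$$a := (p\chi_Q)^*((1-\tau)|Q|) \qquad \text{and} \qquad b := (p\chi_Q)^*(\la|Q|).$$
Because $\la,\tau < 1/2$ we have $\la|Q| < (1-\tau)|Q|$, so monotonicity of the rearrangement gives $a \le b$; the hypothesis $r \le p_-$ also gives $a, b \ge r > 1$. Writing the two sides of (\ref{impeq}) as $f(t) := \tau^{1/r} \la^{a/r} t^a$ and $g(t) := K\la t^b$, we observe that $f$ is strictly increasing on $(0,\infty)$ while the ratio $f(t)/g(t)$ is a constant multiple of $t^{a-b}$, hence non-increasing.

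In the generic subcase $a < b$, the ratio decreases strictly from $+\infty$ at $0^+$ to $0$ at $+\infty$, so $f = g$ has a unique positive solution, which is the maximal root $t_Q$. For $t \in (0, t_Q]$, monotonicity of $f$ gives $f(t) \le f(t_Q)$; for $t \in (t_Q, 1)$, the strict decrease of $f/g$ past $t_Q$ yields $f(t) < g(t)$. In either regime $f(t) \le f(t_Q) + g(t)$, which is exactly (\ref{impineq}).

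The degenerate subcase $a = b$ is handled directly: since $a/r \ge 1$ and $\la < 1$ we have $\la^{a/r} \le \la$, and combined with $\tau^{1/r} < 1 \le K$ this gives $\tau^{1/r}\la^{a/r} < K\la$, so $f(t) < g(t)$ for every $t > 0$ and the only root is $t_Q = 0$; thus $f(t_Q) = 0$ and (\ref{impineq}) again holds. I do not foresee a real obstacle; the argument is pure power-function calculus, with only mild bookkeeping needed to confirm that the maximal root $t_Q$ is well-defined in the degenerate subcase.
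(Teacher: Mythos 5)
Your proof is correct and follows essentially the same route as the paper: the key inequality $\tau^{1/r}\la^{a/r}<K\la$ (from $K\ge 1$, $r\le p_-$, $\la<1$), the observation $a\le b$ from monotonicity of the rearrangement, and the same two-case split ($a=b$ forcing $t_Q=0$, versus $a<b$ where one compares $f(t)\le f(t_Q)$ for $t\le t_Q$ and $f(t)\le g(t)$ for $t\ge t_Q$). No gaps.
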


\begin{proof}
Since $K\ge 1$ and $r\le p_-$, we have that
$$\tau^{1/r}\la^{(p\chi_Q)^*((1-\tau)|Q|)/r}<K\la.$$
Also, since $0<\tau,\lambda<1/2$,
\begin{equation}\label{chare}
(p\chi_Q)^*((1-\tau)|Q|)\le (p\chi_Q)^*(\la|Q|).
\end{equation}
From this we have two cases. If both parts in (\ref{chare}) are equal, then (\ref{impeq}) has only one root $t_Q=0$, and then (\ref{impineq}) obviously holds.
If the inequality in (\ref{chare}) is strict, then (\ref{impeq}) has two roots, $t_0=0$ and $t_1$, where $0<t_1<1$. Denote the maximal root by $t_Q$.
Then for all $t\in [t_Q,1)$,
$$\tau^{1/r}(\la^{1/r}t)^{(p\chi_Q)^*((1-\tau)|Q|)}\le K\la t^{(p\chi_Q)^*(\la|Q|)},$$
from which (\ref{impineq}) follows.
\end{proof}

Denote by $t_Q(\la,\tau,r)$ the maximal root of (\ref{impeq}). It follows from (\ref{impeq}) that
\begin{equation}\label{tqeq}
t_Q(\la,\tau,r)=\left(\frac{\tau^{1/r}}{K}\la^{\frac{(p\chi_Q)^*((1-\tau)|Q|)}{r}-1}\right)^{\frac{1}{(p\chi_Q)^*(\la|Q|)-(p\chi_Q)^*((1-\tau)|Q|)}}.
\end{equation}
Consider also the following expression given by
\begin{equation}\label{smce}
\xi_Q(\la,\tau,r):=\left(\tau^{1/r}\la^{\frac{(p\chi_Q)^*((1-\tau)|Q|)-1}{r}}\right)^{\frac{1}{(p\chi_Q)^*(\la|Q|)-(p\chi_Q)^*((1-\tau)|Q|)}}.
\end{equation}

\begin{lemma}\label{aneq} Assume that $1<p_-\le p_+<\infty$. Then $p(\cdot)\in S_{\infty}'$ if and only if the following condition holds:
there exist $1<r\le p_-$, $0<\ga_0<\frac{1}{2}$ and $C\ge 1$
such that for all $0<\la,\tau<\ga_0$ and for any finite family of pairwise disjoint cubes ${\mathcal F}$,
\begin{equation}\label{sumfin}
\tau^{1/r}\sum_{Q\in {\mathcal F}}|Q|(\la^{1/r}\xi_Q(\la,\tau,r))^{(p\chi_Q)^*((1-\tau)|Q|)}\le C,
\end{equation}
where $\xi_Q(\la,\tau,r)$ is given by (\ref{smce}).
\end{lemma}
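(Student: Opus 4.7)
The plan is to use Proposition \ref{chareq} as the bridge between the two conditions, predicated on the algebraic observation that the cube-independent choice $K = \la^{(1-r)/r} \ge 1$ makes the maximal root $t_Q$ of (\ref{impeq}) coincide with $\xi_Q(\la,\tau,r)$ and simultaneously satisfies $K\la = \la^{1/r}$. Proposition \ref{chareq} then delivers the pointwise inequality
\[
\tau^{1/r}(\la^{1/r}t)^\beta \le \tau^{1/r}(\la^{1/r}\xi_Q)^\beta + \la^{1/r}t^\alpha \qquad (t\in(0,1)),
\]
with $\alpha := (p\chi_Q)^*(\la|Q|)$ and $\beta := (p\chi_Q)^*((1-\tau)|Q|)$, together with the identity $\tau^{1/r}(\la^{1/r}\xi_Q)^\beta = \la^{1/r}\xi_Q^\alpha$, which rephrases (\ref{sumfin}) as $\la^{1/r}\sum|Q|\xi_Q^\alpha \le C$.

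For $(\ref{sumfin}) \Rightarrow S_\infty'$, I would apply the pointwise inequality with $t = \a_Q$ for $\{\a_Q\}$ satisfying the $S_\infty'$ hypothesis, sum over $Q\in\mathcal F$, and combine with (\ref{sumfin}) and $\la\sum|Q|\a_Q^\alpha\le 1$ to obtain $\tau^{1/r}\sum|Q|(\la^{1/r}\a_Q)^\beta \le C + \la^{(1-r)/r}$. The blowup term $\la^{(1-r)/r}$ precludes the $S_\infty'$ conclusion at the same $r$, so I pass to the parameter $r_1 := rp_-/(r+p_--1) \in (1,r]$. Using $\beta\ge p_-$ and $\tau,\la<1$,
\[
\tau^{1/r_1}(\la^{1/r_1}\a_Q)^\beta \le \tau^{1/r_1-1/r}\la^{p_-(1/r_1-1/r)}\,\tau^{1/r}(\la^{1/r}\a_Q)^\beta,
\]
and $r_1$ is calibrated exactly so that the compound exponent $p_-(1/r_1-1/r)+(1-r)/r$ vanishes, absorbing the blowup into an $O(1)$ constant and yielding $S_\infty'$ with parameter $r_1$.

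For the converse direction $S_\infty' \Rightarrow (\ref{sumfin})$, I would take $r=r_1$ in (\ref{sumfin}) and study
\[
\f(\la') := \la'\sum_{Q\in\mathcal F}|Q|\,\xi_Q(\la',\tau,r_1)^{\alpha(\la')},
\]
which vanishes at $\la'=0$. On any interval where $\alpha(\la'):=(p\chi_Q)^*(\la'|Q|)$ is constant, $\f$ is continuous and increasing in $\la'$; at the finitely many jump points of $\alpha$, the value $\xi_Q^\alpha$ strictly decreases (since $\alpha$ falls, making the exponent $\alpha/(\alpha-\beta)$ of the base $\tau^{1/r_1}(\la')^{(\beta-1)/r_1}<1$ larger), so $\f$ admits only downward jumps. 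Hence if $\f(\la)>1$, the crossing of the level $1$ must occur within a continuity interval, giving some $\la^*\in(0,\la]$ with $\f(\la^*)=1$. Applying $S_\infty'$ at $\la^*$ with $\a_Q=\xi_Q(\la^*,\tau,r_1)$ and invoking the identity together with $\sum|Q|\xi_Q^\alpha = 1/\la^*$ yields $(\la^*)^{1/r_1-1}\le C_1$, that is, $\la^*\ge C_1^{-r_1/(r_1-1)}$; choosing $\ga_0 < C_1^{-r_1/(r_1-1)}$ contradicts $\la^*\le\la<\ga_0$, so $\f(\la)\le 1$ and $S_\infty'$ applies directly at $\la$ to deliver (\ref{sumfin}). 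The principal obstacle is this converse direction: the step-function behavior of $\alpha(\la')$ makes $\f$ discontinuous, but the saving observation that all discontinuities of $\f$ are downward jumps preserves the intermediate-value argument inside continuity intervals.
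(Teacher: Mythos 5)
Your forward direction $(\ref{sumfin})\Rightarrow S_\infty'$ is essentially the paper's argument in a slicker packaging: the observation that the $\la$-dependent choice $K=\la^{(1-r)/r}$ turns the maximal root $t_Q$ of (\ref{impeq}) into exactly $\xi_Q(\la,\tau,r)$ (with $K\la=\la^{1/r}$) is correct, the resulting bound $C+\la^{(1-r)/r}$ is then absorbed by downgrading to $r_1=rp_-/(r+p_--1)$, and this $r_1$ is precisely the extremal value of the auxiliary exponent $s$ that the paper uses (the paper instead keeps a fixed $K$ and proves $t_Q(\la,\tau,s)\le\xi_Q(\la,\tau,r)$ before invoking Proposition \ref{chareq}); both routes are valid and of comparable length. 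Your converse direction, however, is genuinely different from the paper's: the paper runs Diening's maximal-subfamily bootstrap (extract the maximal $\mathcal F'\subset\mathcal F$ with $\la\sum_{\mathcal F'}|Q|t_Q^{\alpha}\le 2$, show the bound self-improves to $1/2$, and derive a contradiction by adjoining one more cube), whereas you run a crossing argument in the continuous parameter $\la'$ for the function $\f(\la')=\la'\sum_Q|Q|\xi_Q(\la',\tau,r_1)^{\alpha(\la')}$. Your mechanism is sound and arguably more transparent, and the key structural claim you need --- $\f(0^+)=0$ and $\f$ has no upward discontinuities --- is true; the identity and the resulting lower bound $\la^*\ge C_1^{-r_1/(r_1-1)}$, hence the choice of $\ga_0$, all check out.

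One point needs repair in your converse: $\alpha(\la')=(p\chi_Q)^*(\la'|Q|)$ is \emph{not} in general a step function with finitely many jumps separated by intervals of constancy --- for a general measurable exponent the rearrangement is merely non-increasing and right-continuous, and may decrease continuously or have infinitely many jumps. So the phrase ``the crossing must occur within a continuity interval'' does not literally apply. The fix is routine: $g_Q(\la')=\xi_Q(\la',\tau,r_1)^{\alpha(\la')}=b(\la')^{h(\alpha(\la'))}$ with $b(\la')=\tau^{1/r_1}(\la')^{(\beta-1)/r_1}<1$ continuous and $h(\alpha)=\alpha/(\alpha-\beta)$ decreasing, so $g_Q$ is right-continuous with left limits and satisfies $g_Q(\la_0-)\ge g_Q(\la_0)$ everywhere (only downward jumps, including at points where $\alpha$ reaches $\beta$ and $g_Q$ drops to $0$ by the paper's convention). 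Taking $\la^*:=\sup\{\la'\le\la:\f(\la')\le 1\}$ then gives $\f(\la^*)=1$ by combining right-continuity from above with the downward-jump property from below, which is all your argument uses. You should also discard the cubes with $\xi_Q=0$ (where $\alpha=\beta$) before invoking $S_\infty'$, since that condition requires $0<\a_Q<1$; they contribute nothing to either side.
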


\begin{remark}\label{expl}
As we shall see, the result remains true with $t_Q(\la,\tau,r)$ instead of $\xi_Q(\la,\tau,r)$, and the proof is even more natural. The idea of introducing $\xi_Q(\la,\tau,r)$ is that after its substitution
into (\ref{sumfin}) we obtain a slightly more convenient expression compared to $t_Q(\la,\tau,r)$.
\end{remark}

\begin{proof}[Proof of Lemma \ref{aneq}]
We start with a simpler direction that the condition expressed in (\ref{sumfin}) implies the $S_{\infty}'$ condition.

Suppose that (\ref{sumfin}) holds. Let us show that a similar property holds also with $t_Q(\la,\tau,r)$ instead of $\xi_Q(\la,\tau,r)$.
Take $1<s<r$ such that
$$\frac{1}{s}\ge \frac{1}{r}+\frac{1}{p_-}\Big(1-\frac{1}{r}\Big).$$
It follows from this that
$$
\frac{(p\chi_Q)^*((1-\tau)|Q|)-1}{r}\le \frac{(p\chi_Q)^*((1-\tau)|Q|)}{s}-1.
$$
This, combined with the trivial estimate $\frac{\tau^{1/s}}{K}\le \tau^{1/r}$, implies
$$
\frac{\tau^{1/s}}{K}\la^{\frac{(p\chi_Q)^*((1-\tau)|Q|)}{s}-1}\le
\tau^{1/r}\la^{\frac{(p\chi_Q)^*((1-\tau)|Q|)-1}{r}},
$$
and hence
$$t_Q(\la,\tau,s)\le \xi_Q(\la,\tau,r).$$
Therefore, (\ref{sumfin}) implies that there exists $1<s<r$ such that for all $0<\la,\tau<\ga_0<1/2$,
$$
\tau^{1/s}\sum_{Q\in {\mathcal F}}|Q|(\la^{1/s}t_Q(\la,\tau,s))^{(p\chi_Q)^*((1-\tau)|Q|)}\le C.
$$

From this, given a sequence $\{\a_Q\}_{Q\in {\mathcal F}}$ such that $\a_Q\in (0,1)$ and
$$\la\sum_{Q\in {\mathcal F}}|Q|\a_Q^{(p\chi_Q)^*(\la|Q|)}\le 1,$$
using (\ref{impineq}), we obtain
\begin{eqnarray*}
&&\tau^{1/s}\sum_{Q\in {\mathcal F}}|Q|(\la^{1/s}\a_Q)^{(p\chi_Q)^*((1-\tau)|Q|)}\\
&&\le\tau^{1/s}\sum_{Q\in {\mathcal F}}|Q|(\la^{1/s}t_Q(\la,\tau,s))^{(p\chi_Q)^*((1-\tau)|Q|)}\\
&&+K\la\sum_{Q\in {\mathcal F}}|Q|\a_Q^{(p\chi_Q)^*(\la|Q|)}\le C+K.
\end{eqnarray*}
Thus, we have verified that the $S_{\infty}'$ condition holds.

Turn to the converse direction. The idea of the proof is taken from~\cite{D1}.
Assume that the $S_{\infty}'$ condition holds.
Define $K:=2^{\frac{p_+}{p_-}+1}C$, where $C$ is the constant from the $S_{\infty}'$ condition.
Observe that we may clearly assume that $C\ge 1$. Hence, $K>1$, and we can use the same reasoning about the equation (\ref{impeq}) as in the proof of Proposition \ref{chareq}.

Let us show that for any finite family of pairwise disjoint cubes ${\mathcal F}$,
\begin{equation}\label{sumfinn}
\tau^{1/r}\sum_{Q\in {\mathcal F}}|Q|(\la^{1/r}t_Q(\la,\tau,r))^{(p\chi_Q)^*((1-\tau)|Q|)}\le C,
\end{equation}
where $t_Q(\la,\tau,r)$ is given by (\ref{tqeq}).

For brevity denote $t_Q:=t_Q(\la,\tau,r)$. Set $t_Q':=\Big(\frac{1}{\la|Q|}\Big)^{\frac{1}{(p\chi_Q)^*(\la|Q|)}}$. We have $\la|Q|(t_Q')^{(p\chi_Q)^*(\la|Q|)}=1$, and hence, by the $S_{\infty}'$ condition,
$$\tau^{1/r}|Q|(\la^{1/r}t_Q')^{(p\chi_Q)^*((1-\tau)|Q|)}\le C,$$
which implies
$$\tau^{1/r}(\la^{1/r}t_Q')^{(p\chi_Q)^*((1-\tau)|Q|)}<K\la(t_Q')^{(p\chi_Q)^*(\la|Q|)}.$$
From this we conclude that $t_Q<t_Q'$.

Let ${\mathcal F}$ be an arbitrary finite family of pairwise disjoint cubes. Let ${\mathcal F}'\subset {\mathcal F}$ be the maximal subfamily for which
\begin{equation}\label{yanest}
\la\sum_{Q\in {\mathcal F}'}|Q| t_Q^{(p\chi_Q)^*(\la|Q|)}\le 2.
\end{equation}
We claim that ${\mathcal F}'={\mathcal F}$. Observe that from this, by the definition of $t_Q$, we would obtain (\ref{sumfinn}) with $C=2K$.

Suppose that this is not true, that is, ${\mathcal F}'\subset{\mathcal F}$. By (\ref{yanest}),
$$
\la\sum_{Q\in {\mathcal F}'}|Q|(t_Q/2^{1/p_-})^{(p\chi_Q)^*(\la|Q|)}\le 1.
$$
Hence, by the $S_{\infty}'$ condition,
$$
\tau^{1/r}\sum_{Q\in {\mathcal F}'}|Q|(\la^{1/r}t_Q/2^{1/p_-})^{(p\chi_Q)^*((1-\tau)|Q|)}\le C,
$$
and thus
$$
\tau^{1/r}\sum_{Q\in {\mathcal F}'}|Q|(\la^{1/r}t_Q)^{(p\chi_Q)^*((1-\tau)|Q|)}\le 2^{\frac{p_+}{p_-}}C.
$$
From this, using the definition of $t_Q$, we obtain that
\begin{equation}\label{half}
\la\sum_{Q\in {\mathcal F}'}|Q|t_Q^{(p\chi_Q)^*(\la|Q|)}\le \frac{1}{K}2^{\frac{p_+}{p_-}}C=\frac{1}{2}.
\end{equation}

Take now $t_R$, where $R\in {\mathcal F}\setminus {\mathcal F}'$. Since $t_R<t_R'$ we have
$$\la|R|(t_R)^{(p\chi_R)^*(\la|R|)}<1.$$
Therefore, by (\ref{half}),
$$
\la\sum_{Q\in {\mathcal F}'\cup\{R\}}|Q|t_Q^{(p\chi_Q)^*(\la|Q|)}\le \frac{3}{2},
$$
which contradicts the maximality of ${\mathcal F}'$ for which (\ref{yanest}) holds. This proves that ${\mathcal F}'={\mathcal F}$, and therefore (\ref{sumfinn}) is proved.

It remains to show that a similar relation to (\ref{sumfinn}) holds with $\xi_Q(\la,\tau,r)$ instead of $t_Q(\la,\tau,r)$.
Suppose that $1<s<r$. From this we have
$$\tau^{1/s}\le \frac{\tau^{1/r}}{K}\quad(0<\tau\le K^{-\frac{rs}{r-s}})$$
and
$$
\frac{(p\chi_Q)^*((1-\tau)|Q|)}{r}-1\le
\frac{(p\chi_Q)^*((1-\tau)|Q|)-1}{s},
$$
and therefore,
$$
\tau^{1/s}\la^{\frac{(p\chi_Q)^*((1-\tau)|Q|)-1}{s}}
\le \frac{\tau^{1/r}}{K}\la^{\frac{(p\chi_Q)^*((1-\tau)|Q|)}{r}-1}.
$$

Thus, for all $\tau\in (0, K^{-\frac{rs}{r-s}}]$ and $\la\in (0,1]$,
$$\xi_Q(\la,\tau,s)\le t_Q(\la,\tau,r),$$
which, along with (\ref{sumfinn}), proves (\ref{sumfin}) with $1<s<p_-$ instead of $r$ and for all $0<\la,\tau<\ga_0$, where $\ga_0:=K^{-\frac{rs}{r-s}}$.
\end{proof}

\subsection{Putting things together}
Let us now unify Theorem \ref{char2} and Lemma \ref{aneq}.

Given an exponent $p(\cdot)$ and a cube $Q$, for $\la,\tau\in (0,1)$ such that $\la+\tau<1$ define
$$
\Psi_{Q,p}(\la,\tau):=\frac{(p\chi_Q)^*((1-\tau)|Q|)}{(p\chi_Q)^*(\la|Q|)-(p\chi_Q)^*((1-\tau)|Q|)}.
$$

\begin{theorem}\label{reschar} Assume that $1<p_-\le p_+<\infty$. Then $p(\cdot)\in {\mathcal P}$ if and only if $p(\cdot)\in A_{p(\cdot)}$ and the following condition holds:
there exist $r>1$, $0<\ga_0<\frac{1}{2}$ and $C\ge 1$
such that for all $0<\la,\tau<\ga_0$ and for any finite family of pairwise disjoint cubes ${\mathcal F}$,
\begin{equation}\label{sumres}
\sum_{Q\in {\mathcal F}}|Q|\tau^{\frac{1}{r}(1+\Psi_{Q,p}(\la,\tau))}\la^{\frac{1}{r}(p(\chi_Q)^*(\la|Q|)-1)\Psi_{Q,p}(\la,\tau)}\le C.
\end{equation}
\end{theorem}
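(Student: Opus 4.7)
The plan is simply to chain together the two characterizations already obtained in the section: Theorem \ref{char2} tells us that $p(\cdot)\in\mathcal{P}$ is equivalent to $p(\cdot)\in A_{p(\cdot)}\cap S_\infty'$, and Lemma \ref{aneq} rewrites $S_\infty'$ as the condition (\ref{sumfin}) involving the explicit quantity $\xi_Q(\la,\tau,r)$ defined in (\ref{smce}). So there is really nothing left to prove except verifying that plugging (\ref{smce}) into (\ref{sumfin}) produces exactly (\ref{sumres}) after collecting exponents.

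The algebra is straightforward but deserves to be recorded. Write $p_1 := (p\chi_Q)^*(\la|Q|)$ and $p_0 := (p\chi_Q)^*((1-\tau)|Q|)$, so that $\Psi_{Q,p}(\la,\tau)=p_0/(p_1-p_0)$. From (\ref{smce}),
\[
\xi_Q(\la,\tau,r)^{p_0}
= \tau^{\frac{p_0}{r(p_1-p_0)}}\,\la^{\frac{(p_0-1)p_0}{r(p_1-p_0)}}
= \tau^{\Psi_{Q,p}(\la,\tau)/r}\,\la^{(p_0-1)\Psi_{Q,p}(\la,\tau)/r}.
\]
Multiplying by $\la^{p_0/r}$ and using the identity
\[
\frac{p_0}{r}+\frac{(p_0-1)\Psi_{Q,p}(\la,\tau)}{r}
= \frac{p_0(p_1-1)}{r(p_1-p_0)}
= \frac{(p_1-1)\Psi_{Q,p}(\la,\tau)}{r},
\]
one obtains
\[
\bigl(\la^{1/r}\xi_Q(\la,\tau,r)\bigr)^{p_0}
= \tau^{\Psi_{Q,p}(\la,\tau)/r}\,\la^{(p_1-1)\Psi_{Q,p}(\la,\tau)/r}.
\]
Multiplying through by $\tau^{1/r}$ and inserting $p_1=(p\chi_Q)^*(\la|Q|)$ gives the summand in (\ref{sumres}) exactly.

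With this calculation in hand, the ``only if'' direction follows by taking $p(\cdot)\in\mathcal{P}$, applying Theorem \ref{char2} to get $S_\infty'$, then applying Lemma \ref{aneq} to get (\ref{sumfin}), and finally rewriting the left-hand side via the identity above to arrive at (\ref{sumres}). The ``if'' direction reverses this: (\ref{sumres}) rewrites as (\ref{sumfin}), which by Lemma \ref{aneq} gives $S_\infty'$, and then Theorem \ref{char2} combined with the assumption $p(\cdot)\in A_{p(\cdot)}$ yields $p(\cdot)\in\mathcal{P}$.

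The only possible obstacle is bookkeeping: making sure that the constants $r$ and $\ga_0$ match up in the chain of equivalences (they do, up to harmless reductions of $r$ and $\ga_0$, which Lemma \ref{aneq} has already absorbed internally), and that the restriction $\la+\tau<1$ on the definition of $\Psi_{Q,p}(\la,\tau)$ is compatible with $0<\la,\tau<\ga_0<1/2$ (it is, since $\la+\tau<1$ automatically). Given that, the theorem is a one-line assembly of the preceding two results and the exponent identity above.
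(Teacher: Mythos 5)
Your proof is correct and is exactly the paper's argument: the paper's own proof of this theorem is the one-line remark that it is ``just a combination of Theorem \ref{char2} and Lemma \ref{aneq} along with a substitution of the expression for $\xi_Q(\la,\tau,r)$ into (\ref{sumfin})'', and your exponent computation (including the identity $\frac{p_0}{r}+\frac{(p_0-1)\Psi}{r}=\frac{(p_1-1)\Psi}{r}$) correctly carries out that substitution. The bookkeeping you flag (reducing $r$ so that $1<r\le p_-$ as required by Lemma \ref{aneq}, which is harmless since the summands in (\ref{sumres}) decrease as $r$ decreases) is handled appropriately.
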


\begin{proof}
This is just a combination of Theorem \ref{char2} and Lemma \ref{aneq} along with a substitution of the expression for $\xi_Q(\la,\tau,r)$ into (\ref{sumfin}).
\end{proof}

The expression in (\ref{sumres}) does not look symmetric with respect to $\la$ and $\tau$. It turns out that it can be written more symmetric using the conjugate exponent $p'(\cdot)$.

\begin{theorem}\label{fstm}
Assume that $1<p_-\le p_+<\infty$. Then $p(\cdot)\in {\mathcal P}$ if and only if $p(\cdot)\in A_{p(\cdot)}$ and the following condition holds: there exist $r>1$, $0<\ga_0<1/2$ and $C\ge 1$ such that for
all $0<\la,\tau<\ga_0$ and for any sequence of pairwise disjoint cubes~${\mathcal F}$,
\begin{equation}\label{strf}
\sum_{Q\in {\mathcal F}}|Q|\tau^{\frac{1}{r}(1+\Psi_{Q,p}(\la,\tau))}\la^{\frac{1}{r}(1+\Psi_{Q,p'}(\tau,\la))}\le C.
\end{equation}
\end{theorem}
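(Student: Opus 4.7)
The plan is to invoke Theorem~\ref{reschar} and reduce the task to showing the equivalence of conditions (\ref{sumres}) and (\ref{strf}). These two sums share the same exponent on $\tau$, namely $\frac{1}{r}(1+\Psi_{Q,p}(\la,\tau))$, so the only thing to check is that the respective exponents of $\la$,
$$
\frac{1}{r}\bigl((p\chi_Q)^*(\la|Q|)-1\bigr)\Psi_{Q,p}(\la,\tau)\quad\text{and}\quad \frac{1}{r}\bigl(1+\Psi_{Q,p'}(\tau,\la)\bigr),
$$
coincide. Thus the content of Theorem~\ref{fstm} reduces to a single algebraic identity between these two expressions.

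To establish this identity, I would first rewrite $(p'\chi_Q)^*$ in terms of $(p\chi_Q)^*$. Since $p'(x)=\f(p(x))$ with $\f(u):=u/(u-1)$ non-negative and strongly decreasing on $[p_-,p_+]$, Lemma~\ref{compos}(ii) applied with $E=Q$ and $f=p\chi_Q$ yields
$$
(p'\chi_Q)^*(t)=\frac{(p\chi_Q)^*(|Q|-t)}{(p\chi_Q)^*(|Q|-t)-1}\qquad\text{for a.e.\ }t\in(0,|Q|).
$$
Setting $P_1:=(p\chi_Q)^*(\la|Q|)$ and $P_2:=(p\chi_Q)^*((1-\tau)|Q|)$, this specializes to $(p'\chi_Q)^*(\tau|Q|)=P_2/(P_2-1)$ and $(p'\chi_Q)^*((1-\la)|Q|)=P_1/(P_1-1)$ for a.e.\ $(\la,\tau)$. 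Substituting into the definition of $\Psi_{Q,p'}(\tau,\la)$ and simplifying produces $\Psi_{Q,p'}(\tau,\la)=\frac{P_1(P_2-1)}{P_1-P_2}$, whence $1+\Psi_{Q,p'}(\tau,\la)=\frac{P_2(P_1-1)}{P_1-P_2}=(P_1-1)\Psi_{Q,p}(\la,\tau)$. This is precisely the required identity, so the summands in (\ref{sumres}) and (\ref{strf}) are literally equal for a.e.\ $(\la,\tau)\in(0,\ga_0)^2$.

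The main obstacle I anticipate is promoting this almost-everywhere equality to one valid at every $(\la,\tau)\in(0,\ga_0)^2$, since the formula for $(p'\chi_Q)^*$ coming from Lemma~\ref{compos}(ii) can fail on the (at most countable) discontinuity set of $(p\chi_Q)^*$, as illustrated in Remark~\ref{explan}. I would handle this exactly as in the proof of Theorem~\ref{char2}: at a bad pair $(\la_0,\tau_0)$, pick a sequence of good pairs $(\la_0-\e_k,\tau_0-\e_k)$, apply the bound coming from whichever of (\ref{sumres}), (\ref{strf}) has been assumed, and pass to the limit using the monotonicity of $(p\chi_Q)^*$ in its argument together with the continuity of the power functions $\la^s,\tau^s$. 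Once this essentially routine bookkeeping is finished, Theorem~\ref{reschar} delivers Theorem~\ref{fstm} at once.
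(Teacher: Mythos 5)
Your proposal is correct and follows essentially the same route as the paper: invoke Theorem~\ref{reschar}, use Lemma~\ref{compos}(ii) to express $(p'\chi_Q)^*$ through $(p\chi_Q)^*$, verify the algebraic identity $\bigl((p\chi_Q)^*(\la|Q|)-1\bigr)\Psi_{Q,p}(\la,\tau)=1+\Psi_{Q,p'}(\tau,\la)$ almost everywhere (your computation of $\Psi_{Q,p'}(\tau,\la)=P_1(P_2-1)/(P_1-P_2)$ is exactly the ``simple computation'' the paper alludes to), and then upgrade from almost all to all $(\la,\tau)$ by the monotonicity-plus-limiting argument of Theorem~\ref{char2}, which is precisely what the paper does via the inequality (\ref{twosid}).
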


\begin{proof} By the second part of Lemma \ref{compos},
\begin{equation}\label{ptag}
(p'\chi_Q)^*(\la|Q|)=[(p\chi_Q)^*((1-\la)|Q|)]'
\end{equation}
for almost all $\la\in (0,1)$.
From this, a simple computation shows that for almost all $\la,\tau\in (0,1)$,
$$\big((p\chi_Q)^*(\la|Q|)-1\big)\Psi_{Q,p}(\la,\tau)=1+\Psi_{Q,p'}(\tau,\la),$$
which, by (\ref{sumres}), implies (\ref{strf}) for almost all $0<\la,\tau<\ga_0$.

Now, similarly to what we did in the proof of Theorem \ref{char2}, one can easily pass from ``almost all" to ``all" in (\ref{strf}). Indeed,
assume that $s\le \tau$ and $t\le \la$. Then
\begin{equation}\label{twosid}
\Psi_{Q,p}(t,s)\le \frac{(p\chi_Q)^*((1-\tau)|Q|)}{(p\chi_Q)^*(t|Q|)-(p\chi_Q)^*((1-\tau)|Q|)}\le \Psi_{Q,p}(\la,\tau).
\end{equation}

Suppose that $\la_0$ and $\tau_0$ do not belong to the set of almost all points for which (\ref{strf}) is already established. There exists a sequence $\e_k\to 0$ as $k\to \infty$ such that
$\la_0-\e_k$ and $\tau_0-\e_k$ do belong to this set for all $k$. Hence, using (\ref{twosid}), we obtain
$$
(\tau_0-\e_k)^{\Psi_{Q,p}(\la_0,\tau_0)}\le (\tau_0-\e_k)^{\Psi_{Q,p}(\la_0-\e_k,\tau_0-\e_k)}
$$
and
$$
(\la_0-\e_k)^{\Psi_{Q,p'}(\tau_0,\la_0))}\le (\la_0-\e_k)^{\Psi_{Q,p'}(\tau_0-\e_k,\la_0-\e_k))},
$$
from which
$$
\sum_{Q\in {\mathcal F}}|Q|(\tau_0-\e_k)^{\frac{1}{r}(1+\Psi_{Q,p}(\la_0,\tau_0))}(\la_0-\e_k)^{\frac{1}{r}(1+\Psi_{Q,p'}(\tau_0,\la_0))}\le C.
$$
Letting $k\to \infty$ yields (\ref{strf}) for $(\la,\tau)=(\la_0,\tau_0)$, and hence (\ref{strf}) holds for all $0<\la,\tau<\ga_0$.
\end{proof}

Next, we observe that yet another characterization holds, which is very similar to Theorem \ref{fstm}.

\begin{theorem}\label{fstm1}
Assume that $1<p_-\le p_+<\infty$. Then $p(\cdot)\in {\mathcal P}$ if and only if $p(\cdot)\in A_{p(\cdot)}$ and the following condition holds: there exist $r>1$, $0<\ga_0<1/2$ and $C\ge 1$ such that for
all $0<\la,\tau<\ga_0$ and for any sequence of pairwise disjoint cubes~${\mathcal F}$,
\begin{equation}\label{weakf}
\sum_{Q\in {\mathcal F}}|Q|(\tau\lambda)\tau^{\frac{1}{r}\Psi_{Q,p}(\la,\tau)}\la^{\frac{1}{r}\Psi_{Q,p'}(\tau,\la)}\le C.
\end{equation}
\end{theorem}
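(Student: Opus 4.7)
The plan is to exploit Theorem~\ref{fstm}: that theorem characterises $p(\cdot) \in \mathcal{P}$ (together with $A_{p(\cdot)}$) by condition (\ref{strf}), so it suffices to show that (\ref{strf}) and (\ref{weakf}) are equivalent, with the constants $r$, $\ga_0$, $C$ allowed to vary between the two. The forward implication (\ref{strf})$\Rightarrow$(\ref{weakf}) is immediate with the \emph{same} $r$: since $0 < \tau, \la < \ga_0 < 1/2$ and $r > 1$, one has $\tau\la \le (\tau\la)^{1/r}$, so each summand of (\ref{weakf}) is dominated by the corresponding summand of (\ref{strf}). The real content lies in the reverse implication.

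The key observation is that $\Psi_{Q,p}(\la,\tau)$ and $\Psi_{Q,p'}(\tau,\la)$ admit uniform positive lower bounds whenever they are finite. Since the non-increasing rearrangement $(p\chi_Q)^*$ takes values in $[p_-,p_+]$, the numerator of $\Psi_{Q,p}(\la,\tau)$ is bounded below by $p_-$ while the denominator is at most $p_+-p_-$. Hence, provided $p_-<p_+$ (if $p_-=p_+$, $p$ is constant and the theorem is trivial),
\[
\Psi_{Q,p}(\la,\tau) \ge \frac{p_-}{p_+-p_-} =: \Psi_0 > 0
\]
whenever $\Psi_{Q,p}(\la,\tau) < \infty$, and similarly $\Psi_{Q,p'}(\tau,\la) \ge \Psi_0' > 0$ when finite. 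If either quantity equals $\infty$, the conventions $\frac{1}{0}=\infty$, $c^\infty=0$ for $c \in (0,1)$ make the $Q$-summand vanish in both (\ref{strf}) and (\ref{weakf}), so such cubes contribute nothing to either side.

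Assume now that (\ref{weakf}) holds with some $r > 1$. I pick $s \in (1, r)$ small enough that
\[
\frac{r(s-1)}{r-s} \le \min(\Psi_0,\Psi_0'),
\]
which is possible because the left-hand side is continuous in $s$ and tends to $0$ as $s \to 1^+$. An elementary rearrangement shows that the inequality $\Psi \ge \frac{r(s-1)}{r-s}$ is equivalent to $\frac{1+\Psi}{s} \ge 1 + \frac{\Psi}{r}$; hence, since $\tau \in (0,1)$,
\[
\tau^{\frac{1}{s}(1+\Psi_{Q,p}(\la,\tau))} \le \tau^{1+\frac{1}{r}\Psi_{Q,p}(\la,\tau)}
\]
for every cube $Q$ (the convention handles the infinite case). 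The analogous inequality holds with $\tau$, $p$ replaced by $\la$, $p'$. Multiplying these two and summing over $Q \in \mathcal{F}$ bounds the left-hand side of (\ref{strf}) for the exponent $s$ by the left-hand side of (\ref{weakf}) for the exponent $r$, and hence by $C$. Theorem~\ref{fstm} then gives $p(\cdot) \in \mathcal{P}$, as desired. Finally, passing from ``almost all'' to ``all'' $\la,\tau \in (0,\ga_0)$ (if required) is handled exactly as at the end of the proof of Theorem~\ref{fstm}, using the monotonicity recorded in (\ref{twosid}).

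The main obstacle is recognising the uniform lower bound $\Psi \ge \Psi_0>0$: it is precisely this bound, which comes from the boundedness of $p$, that allows the apparently weaker factor $\tau\la$ in (\ref{weakf}) to be upgraded termwise to the symmetric $(\tau\la)^{1/s}$ in (\ref{strf}), at the cost of shrinking $r$ slightly. Without such a lower bound the comparison between the two conditions would fail on cubes with very large oscillation of $p(\cdot)$.
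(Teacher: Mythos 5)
Your proposal is correct and follows essentially the same route as the paper: the trivial termwise bound for (\ref{strf})$\Rightarrow$(\ref{weakf}), and for the converse the uniform lower bound $\Psi_{Q,p}(\la,\tau)\ge p_-/(p_+-p_-)$ (and its $p'$ analogue), which lets one shrink $r$ to some $s\in(1,r)$ so that $\tau^{\frac{1}{s}(1+\Psi)}\le\tau^{1+\frac{1}{r}\Psi}$. The paper chooses its auxiliary exponents $\nu,\theta$ by solving the corresponding equalities and sets $s=\min(\nu,\theta)$, whereas you pick a single $s$ via the equivalent inequality $\frac{r(s-1)}{r-s}\le\min(\Psi_0,\Psi_0')$; this is only a cosmetic difference.
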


\begin{proof}
We trivially have that (\ref{strf}) implies (\ref{weakf}). We will show that, in turn, (\ref{weakf}) with a given $r>1$ implies (\ref{strf}) with some $1<s<r$ instead of $r$.

Define $\nu$ by
$$\frac{1}{\nu}:=\Big(\frac{p_+-p_-}{p_+}\Big)\Big(1-\frac{1}{r}\Big)+\frac{1}{r}.$$
Observe that $1<\nu<r$. It follows from this definition that
\begin{equation}\label{phiq}
\frac{1}{\nu}(1+\Psi_{Q,p}(\la,\tau))\ge 1+\frac{1}{r}\Psi_{Q,p}(\la,\tau).
\end{equation}
Indeed, (\ref{phiq}) is equivalent to that
$$
\Big(\frac{1}{\nu}-\frac{1}{r}\Big)\Psi_{Q,p}(\la,\tau)\ge 1-\frac{1}{\nu}.
$$
But this follows from the estimate
$$
\Psi_{Q,p}(\la,\tau)\ge \frac{p_-}{(p\chi_Q)^*(\la|Q|)-p_-}\ge \frac{p_-}{p_+-p_-}
$$
and the fact that
$$\Big(\frac{1}{\nu}-\frac{1}{r}\Big)\frac{p_-}{p_+-p_-}=1-\frac{1}{\nu}.$$

Similarly, setting
$$\frac{1}{\theta}:=\Big(\frac{(p')_+-(p')_-}{(p')_+}\Big)\Big(1-\frac{1}{r}\Big)+\frac{1}{r},$$
we obtain that $1<\theta<r$ and
$$
\frac{1}{\theta}(1+\Psi_{Q,p'}(\tau,\la))\ge 1+\frac{1}{r}\Psi_{Q,p'}(\tau,\la).
$$
From this and (\ref{phiq}) we obtain that (\ref{strf}) holds with $s:=\min(\nu,\theta)$ instead of $r$.
\end{proof}

The whole difference between (\ref{strf}) and (\ref{weakf}) is that $(\tau\la)^{1/r}$ in (\ref{strf}) is replaced by a smaller expression $\tau\la$ in (\ref{weakf}).
Thus, (\ref{weakf}) is easier to check. This will be useful in the proof of the following statement, which is yet another characterization of the class ${\mathcal P}$.

\begin{theorem}\label{yac}
Assume that $1<p_-\le p_+<\infty$. Then $p(\cdot)\in {\mathcal P}$ if and only if $p(\cdot)\in A_{p(\cdot)}$ and the following condition holds: there exist $r>1$, $0<\ga_0<1/2$ and $C\ge 1$ such that for
all $0<\la,\tau<\ga_0$ and for any sequence of pairwise disjoint cubes~${\mathcal F}$,
\begin{equation}\label{intcon}
\sum_{Q\in {\mathcal F}}|Q|\int_{\la}^{\ga_0}\int_{\tau}^{\ga_0}
\tau^{\frac{1}{r}\Psi_{Q,p}(t,s)}\la^{\frac{1}{r}\Psi_{Q,p'}(s,t)}ds dt\le C.
\end{equation}
\end{theorem}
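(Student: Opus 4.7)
The plan is to establish (\ref{intcon}) as an intermediate condition sandwiched between (\ref{strf}) (Theorem \ref{fstm}) and (\ref{weakf}) (Theorem \ref{fstm1}): show (\ref{strf})$\Rightarrow$(\ref{intcon})$\Rightarrow$(\ref{weakf}) with a possibly smaller exponent, then invoke the earlier characterizations.

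\textbf{From (\ref{strf}) to (\ref{intcon}).} By Theorem \ref{fstm}, $p(\cdot)\in\mathcal{P}$ implies $A_{p(\cdot)}$ and (\ref{strf}) for some $r>1$. Rewriting (\ref{strf}) with $(\la,\tau)$ replaced by $(t,s)$ gives
$$\sum_{Q\in\mathcal{F}}|Q|\,s^{\frac{1}{r}\Psi_{Q,p}(t,s)}t^{\frac{1}{r}\Psi_{Q,p'}(s,t)}\le C(st)^{-1/r}.$$
On the integration domain $t\ge\la,\ s\ge\tau$, bases lie in $(0,1)$ and exponents are non-negative, so $\tau^{\Psi_{Q,p}(t,s)/r}\le s^{\Psi_{Q,p}(t,s)/r}$ and similarly for the $\la$-factor. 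Interchanging sum and integral by Tonelli reduces (\ref{intcon}) to bounding $C\int_\la^{\ga_0}\int_\tau^{\ga_0}(st)^{-1/r}ds\,dt$, which is finite because $r>1$; this yields (\ref{intcon}) with the same $r$.

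\textbf{From (\ref{intcon}) to (\ref{weakf}).} Assume (\ref{intcon}) with parameters $r,\ga_0,C$. First shrink $\ga_0$ if needed so that $\ga_0<4^{-1/(r-1)}$; this preserves (\ref{intcon}) because the integrand is non-negative. For $\la,\tau\in(0,\ga_0/2]$, apply (\ref{intcon}) at the point $(\la/2,\tau/2)$ and restrict the $(t,s)$ integration to the sub-box $[\la,2\la]\times[\tau,2\tau]$. Monotonicity of $\Psi_{Q,p}$ (the second half of (\ref{twosid})) gives $\Psi_{Q,p}(t,s)\le\Psi_{Q,p}(2\la,2\tau)$ on the box, so since $\tau/2\in(0,1)$ the integrand is bounded below by $(\tau/2)^{\Psi_{Q,p}(2\la,2\tau)/r}(\la/2)^{\Psi_{Q,p'}(2\tau,2\la)/r}$. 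Multiplying by the box area $\la\tau$ and renaming $(\la',\tau'):=(2\la,2\tau)$, which ranges over $(0,\ga_0]\times(0,\ga_0]$, I obtain
$$\sum_{Q\in\mathcal{F}}|Q|(\la'\tau')(\tau'/4)^{\frac{1}{r}\Psi_{Q,p}(\la',\tau')}(\la'/4)^{\frac{1}{r}\Psi_{Q,p'}(\tau',\la')}\le 4C.$$
A logarithmic comparison shows that for any $\nu\in(1,r)$ with $\nu\le r\ln(1/\ga_0)/(\ln(1/\ga_0)+\ln 4)$ — which is possible precisely because $\ga_0<4^{-1/(r-1)}$ — the pointwise inequality $(\tau'/4)^{\Psi/r}\ge\tau'^{\Psi/\nu}$ holds uniformly in $\tau'\in(0,\ga_0]$ and $\Psi\ge 0$, and analogously for the $\la'$ factor. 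Substituting delivers (\ref{weakf}) with exponent $\nu>1$ and the same $\ga_0$, and Theorem \ref{fstm1} yields $p(\cdot)\in\mathcal{P}$.

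\textbf{Main obstacle.} The delicate point is the self-improvement in the second direction. The coarse box restriction introduces multiplicative errors $(1/4)^{\Psi_{Q,p}/r}$ that can be arbitrarily small, since $\Psi_{Q,p}$ is unbounded — it blows up on cubes where $p(\cdot)$ is nearly constant. The rescue is that $\tau'<\ga_0$ is bounded away from $1$, so $\tau'^{\Psi}$ already carries decay; trading a small amount of that decay by replacing $\tau'^{\Psi/r}$ with the slightly larger $\tau'^{\Psi/\nu}$ is exactly enough to absorb $(1/4)^{\Psi/r}$ provided $1/\nu-1/r\ge\ln 4/(r\ln(1/\ga_0))$. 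This forces $\ga_0<4^{-1/(r-1)}$, which is why the preliminary reduction of $\ga_0$ is needed and why the output exponent $\nu$ in (\ref{weakf}) is strictly smaller than the input $r$ — a phenomenon reminiscent of self-improvement in the classical weighted theory.
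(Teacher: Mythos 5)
Your proof is correct and follows the same route as the paper: it sandwiches (\ref{intcon}) between (\ref{strf}) and (\ref{weakf}) via Theorems \ref{fstm} and \ref{fstm1}, obtaining the first implication by integrating (\ref{strf}) against the convergent kernel $(st)^{-1/r}$ and the second by restricting the integral to a dyadic box and invoking the monotonicity (\ref{twosid}). The only difference is that you spell out the final step --- absorbing the lost factor $4^{-\Psi/r}$ by passing to a slightly smaller exponent $\nu$ after shrinking $\ga_0$ below $4^{-1/(r-1)}$ --- which the paper compresses into the phrase ``from which (\ref{weakf}) follows''; your explicit treatment of this self-improvement is accurate and, if anything, more careful than the original.
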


\begin{proof}
This is a simple combination of Theorems \ref{fstm} and \ref{fstm1}. Indeed, it suffices to show that $(\ref{strf})\Rightarrow (\ref{intcon})\Rightarrow (\ref{weakf})$.
Assume that (\ref{strf}) holds. Then, for all $0<t,s<\ga_0$,
$$
\sum_{Q\in {\mathcal F}}|Q|s^{\frac{1}{r}\Psi_{Q,p}(t,s)}t^{\frac{1}{r}\Psi_{Q,p'}(s,t)}\le \frac{C}{(ts)^{1/r}}.
$$
Integrating both sides over $t,s\in (0,\ga_0)$ yields
$$
\sum_{Q\in {\mathcal F}}|Q|\int_0^{\ga_0}\int_0^{\ga_0}s^{\frac{1}{r}\Psi_{Q,p}(t,s)}t^{\frac{1}{r}\Psi_{Q,p'}(s,t)}ds dt\le C.
$$
From this, since
$$
\int_{\la}^{\ga_0}\int_{\tau}^{\ga_0}
\tau^{\frac{1}{r}\Psi_{Q,p}(t,s)}\la^{\frac{1}{r}\Psi_{Q,p'}(s,t)}ds dt\le
\int_0^{\ga_0}\int_0^{\ga_0}s^{\frac{1}{r}\Psi_{Q,p}(t,s)}\eta^{\frac{1}{r}\Psi_{Q,p'}(s,t)}ds dt,
$$
we obtain (\ref{intcon}).

Suppose now that (\ref{intcon}) holds. Then, by (\ref{twosid}), for all $0<\la,\tau<\ga_0$,
\begin{eqnarray*}
&&\sum_{Q\in {\mathcal F}}|Q|\frac{\la\tau}{4}(\tau/2)^{\frac{1}{r}\Psi_{Q,p}(\la,\tau)}(\la/2)^{\frac{1}{r}\Psi_{Q,p'}(\tau,\la)}\\
&&\le\sum_{Q\in {\mathcal F}}|Q|\int_{\la/2}^{\la}\int_{\tau/2}^{\tau}
(\tau/2)^{\frac{1}{r}\Psi_{Q,p}(t,s)}(\la/2)^{\frac{1}{r}\Psi_{Q,p'}(s,t)}ds dt\le C,
\end{eqnarray*}
from which (\ref{weakf}) follows.
\end{proof}

\section{Proof of Theorem \ref{mr}}\label{ss5}
Let $F_{p,\la,\tau}$ be the function from Definition \ref{fpla}. Fix a cube $Q\subset {\mathbb R}^n$, and consider the iterated rearrangement
of $F_{p,\la,\tau}$ on $Q\times Q$ as it was defined in Section \ref{ss32}.

\begin{lemma}\label{computr} For almost all $t,s\in (0,1)$,
\begin{equation}\label{mid}
(F_{p,\la,\tau}\chi_{Q\times Q})^*(t|Q|,s|Q|)=
\begin{cases}
\tau^{\Psi_{Q,p}(t,s)}\la^{\Psi_{Q,p'}(s,t)},&t+s<1\\
0,&t+s\ge 1.
\end{cases}
\end{equation}
\end{lemma}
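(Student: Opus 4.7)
The plan is to apply Lemma \ref{comp1} twice: first, for each fixed $y\in Q$, rearrange $F_{p,\la,\tau}(\cdot,y)\chi_Q$ in $x$; then, for each fixed $t$, rearrange the resulting function in $y$. The function $F_{p,\la,\tau}$ is built precisely so that these compositions fit the hypotheses of Lemma \ref{comp1}. Using $q' = q/(q-1)$, the $p'$-exponent can be rewritten purely in $p$:
\[
\frac{p'(x)}{p'(y)-p'(x)} = \frac{p(x)(p(y)-1)}{p(x)-p(y)} \quad\text{when } p(x)>p(y),
\]
so on its support $F_{p,\la,\tau}(x,y) = \varphi_{p(y)}(p(x))$, where $\varphi_a(u):=\tau^{a/(u-a)}\la^{u(a-1)/(u-a)}$. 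A logarithmic differentiation gives
\[
\frac{d}{du}\log\varphi_a(u) = \frac{-a\log(\tau\la^{a-1})}{(u-a)^2},
\]
which is strictly positive since $\tau,\la\in(0,1)$ and $a\geq p_->1$. Both exponents blow up as $u\to a^+$, so $\varphi_a$ extends continuously to $a$ with value $0$, and $\varphi_a$ is then strongly increasing and continuous on $[a,\infty)$.

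Applying Lemma \ref{comp1}(i) with $f = p\chi_Q$, $\alpha = p(y)$, $\varphi = \varphi_{p(y)}$ yields, for each $y\in Q$,
\[
(F_{p,\la,\tau}\chi_{Q\times Q})^*(t|Q|,y) = \varphi_{p(y)}(a_t)\chi_{\{p(y)<a_t\}}(y),\qquad a_t:=(p\chi_Q)^*(t|Q|).
\]
For $t$ fixed, this is $\psi_t(p(y))\chi_{\{p(y)<a_t\}}$ with $\psi_t(w):=\tau^{w/(a_t-w)}\la^{a_t(w-1)/(a_t-w)}$; the same kind of differentiation shows that $\psi_t$ is strongly decreasing on $[p_-,a_t]$ (with $\psi_t(a_t):=0$ by continuity). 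Lemma \ref{comp1}(ii) then gives, for almost every $s\in(0,1)$,
\[
(F_{p,\la,\tau}\chi_{Q\times Q})^*(t|Q|,s|Q|) = \psi_t(b_s)\chi_{\{b_s<a_t\}},\qquad b_s:=(p\chi_Q)^*((1-s)|Q|).
\]

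It remains to put $\psi_t(b_s)$ into symmetric form and to identify the support. Direct substitution of $q'=q/(q-1)$ shows $\frac{w(a-1)}{a-w} = \frac{a'}{w'-a'}$ for $1<w<a$; applied with $a=a_t$, $w=b_s$ and combined with (\ref{ptag}) (which gives $a_t' = (p'\chi_Q)^*((1-t)|Q|)$ and $b_s' = (p'\chi_Q)^*(s|Q|)$ a.e.), this transforms the $\la$-exponent $a_t(b_s-1)/(a_t-b_s)$ into $\Psi_{Q,p'}(s,t)$, while the $\tau$-exponent is manifestly $\Psi_{Q,p}(t,s)$. Hence $\psi_t(b_s) = \tau^{\Psi_{Q,p}(t,s)}\la^{\Psi_{Q,p'}(s,t)}$ whenever $b_s<a_t$. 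Since $(p\chi_Q)^*$ is non-increasing, $b_s\geq a_t$ on $\{t+s\geq 1\}$ (giving the zero case) and $b_s<a_t$ holds a.e.\ on $\{t+s<1\}$, so (\ref{mid}) follows. The main obstacle is merely keeping track of the two independent ``almost everywhere'' qualifiers inherited from the two applications of Lemma \ref{comp1}; the real substance is the symmetry identity for conjugate exponents, which is exactly the algebraic point behind the definition of $F_{p,\la,\tau}$.
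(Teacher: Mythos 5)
Your proposal follows essentially the same route as the paper: two applications of Lemma \ref{comp1} (first in $x$ with the increasing function $\varphi_{p(y)}$, then in $y$ with the decreasing function $\psi_t$), followed by the conjugate-exponent identity and (\ref{ptag}) to produce the symmetric form; your explicit logarithmic differentiation just makes precise the monotonicity that the paper asserts directly. The one inaccuracy is the final claim that $b_s<a_t$ holds a.e.\ on $\{t+s<1\}$: this can fail on a set of positive measure (e.g.\ if $p$ is constant on $Q$, then $b_s=a_t$ everywhere), so you must treat the case $b_s=a_t$ separately, observing that there the left-hand side vanishes because the indicator $\chi_{\{b_s<a_t\}}$ does, while the right-hand side vanishes because $\Psi_{Q,p}(t,s)=\infty$ and $\tau^{\infty}=0$ under the paper's convention --- exactly as the paper does. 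With that one-sentence patch the argument is complete.
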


\begin{proof} For brevity denote $F$ instead of $F_{p,\la,\tau}$, and $F^*(t|Q|,s|Q|)$ instead of the left-hand side of (\ref{mid}).

For $\a,\b\in (1,\infty)$ consider the function
$$\psi(\a,\b):=
\begin{cases}
\tau^{\frac{\b}{\a-\b}}\la^{\frac{\a'}{\b'-\a'}},&\a>\b\\
0,&\a\le \b.
\end{cases}
$$
It is easily seen that $\psi$ is strictly increasing in $\a$ on $[\b,\infty)$ and strictly decreasing in $\b$ on $(1,\a]$.

For a fixed $y\in Q$, denote
$$\f(\a):=\psi\big(\a,p(y)\big).$$
Then
$$F(x,y)=\f\big(p(x)\big)\chi_{\{x\in Q:p(x)>p(y)\}}(x),$$
and $\f$ is strongly increasing on $[p(y),\infty)$.
Therefore, by the first part of Lemma \ref{comp1}, for all $t\in (0,1)$,
$$F^*(t|Q|,y)=\psi\big((p\chi_Q)^*(t|Q|),p(y)\big)\chi_{\{t\in (0,1):(p\chi_Q)^*(t|Q|)>p(y)\}}(t).$$

Now, for a fixed $t\in (0,1)$, denote
$$\eta(\b):=\psi\big((p\chi_Q)^*(t|Q|),\b\big).$$
Then
$$F^*(t|Q|,y)=\eta(p(y))\chi_{\{y\in Q: p(y)<(p\chi_Q)^*(t|Q|)\}}(y),$$
and $\eta$ is strongly decreasing on $(1, (p\chi_Q)^*(t|Q|)]$.
Therefore, by the second part of Lemma \ref{comp1}, for almost all $s\in (0,1)$,
\begin{equation}\label{fts}
F^*(t|Q|,s|Q|)=\psi\big((p\chi_Q)^*(t|Q|),(p\chi_Q)^*((1-s)|Q|)\big).
\end{equation}

Suppose now that $t+s\ge 1$. Then
$$(p\chi_Q)^*(t|Q|)\le (p\chi_Q)^*((1-s)|Q|),$$
which, by (\ref{fts}), implies that $F^*(t|Q|,s|Q|)=0$, and hence (\ref{mid}) holds.

Suppose that $t+s<1$. Then
$$(p\chi_Q)^*(t|Q|)\ge (p\chi_Q)^*((1-s)|Q|).$$
If we have an equality here, (\ref{fts}) shows again that $F^*(t|Q|,s|Q|)=0$, which confirms (\ref{mid}), since in this case $\tau^{\Psi_{Q,p}(t,s)}=0$.
If the inequality here is strict, then, applying (\ref{ptag}), we obtain
\begin{equation}\label{psibig}
\psi\big((p\chi_Q)^*(t|Q|),(p\chi_Q)^*((1-s)|Q|)\big)=\tau^{\Psi_{Q,p}(t,s)}\la^{\Psi_{Q,p'}(s,t)},
\end{equation}
which, along with (\ref{fts}), proves (\ref{mid}).
\end{proof}

We are now ready to prove Theorem \ref{mr}.

\begin{proof}[Proof of Theorem \ref{mr}]
The proof goes via Theorem \ref{yac} by showing that the ${\mathcal U}_{\infty}$ condition is equivalent to the condition expressed in (\ref{intcon}).

Suppose that the ${\mathcal U}_{\infty}$ condition holds. Recall that this means that
for some $0<\ga<1/2$ and for all $0<\la,\tau<\ga$,
$$\sum_{Q\in {\mathcal F}}\frac{1}{|Q|}\inf_{E\subset Q:|E|=\la|Q|\atop G\subset Q:|G|=\tau|Q|}\int_{Q\setminus G}\int_{Q\setminus E}
F_{p,\la,\tau}(x,y)^{1/r}dxdy\le C.$$
From this, by Proposition \ref{itrpr},
$$
\sum_{Q\in {\mathcal F}}|Q|\int_{\la}^{1}\int_{\tau}^{1}(F_{p,\la,\tau}\chi_{Q\times Q})^*(t|Q|,s|Q|)dsdt\le C.
$$
Hence, by Lemma \ref{computr}, for $\ga_0:=\ga$ and for all $0<\la,\tau<\ga_0$ we have
$$
\sum_{Q\in {\mathcal F}}|Q|\int_{\la}^{\ga_0}\int_{\tau}^{\ga_0}\tau^{\frac{1}{r}\Psi_{Q,p}(t,s)}\la^{\frac{1}{r}\Psi_{Q,p'}(s,t)}dsdt\le C.
$$
Thus, the condition (\ref{intcon}) holds.

Let us turn to the converse direction. Assume that (\ref{intcon}) holds.
Set $\ga:=\ga_0/2$, and assume that $0<\la,\tau<\ga$.

Let ${\mathcal F}$ be a finite family of pairwise disjoint cubes, and let $Q\in {\mathcal F}$.
Choose subsets
$$E_{\la}\subset \{x\in Q:p(x)\ge (p\chi_Q)^*(\la|Q|)\}$$
and
$$G_{\tau}\subset \{y\in Q: p(y)\le (p\chi_Q)^*((1-\tau)|Q|)\}$$
such that $|E_{\la}|=\la|Q|$ and $|G_{\tau}|=\tau|Q|$. In order to prove that the ${\mathcal U}_{\infty}$ condition holds, it suffices to show that
$$I_{p,\la,\tau}:=\int_{Q\setminus G_{\tau}}\int_{Q\setminus E_{\la}}
F_{p,\la,\tau}(x,y)^{1/r}dxdy$$
satisfies
\begin{equation}\label{itss}
I_{p,\la,\tau}\le C|Q|^2\int_{\la}^{\ga_0}\int_{\tau}^{\ga_0}\tau^{\frac{1}{r}\Psi_{Q,p}(t,s)}\la^{\frac{1}{r}\Psi_{Q,p'}(s,t)}dsdt,
\end{equation}
where $C$ does not depend on $\la$ and $\tau$.

Write $Q\setminus E_{\la}$ as a disjoint union $Q\setminus E_{\la}=A\cup B$, where
$$A\subset \{x\in Q: (p\chi_Q)^*(\ga_0|Q|)\le p(x)\le (p\chi_Q)^*(\la|Q|)\}$$
and
$$B\subset \{x\in Q: p(x)\le (p\chi_Q)^*(\ga_0|Q|)\}$$
with $|A|=(\ga_0-\la)|Q|$ and $|B|=(1-\ga_0)|Q|$.

Similarly, write $Q\setminus G_{\tau}$ as a disjoint union $Q\setminus G_{\tau}=C\cup D$, where
$$C\subset \{y\in Q: (p\chi_Q)^*((1-\tau)|Q|)\le p(y)\le (p\chi_Q)^*((1-\ga_0)|Q|)\}$$
and
$$D\subset \{y\in Q: p(y)\ge (p\chi_Q)^*((1-\ga_0)|Q|)\}$$
with $|C|=(\ga_0-\tau)|Q|$ and $|D|=(1-\ga_0)|Q|$.

Split $I_{p,\la,\tau}$
into four parts
\begin{eqnarray*}
&&I_{p,\la,\tau}=\int_{C}\int_{A}
F_{p,\la,\tau}(x,y)^{1/r}dxdy+\int_{C}\int_{B}
F_{p,\la,\tau}(x,y)^{1/r}dxdy\\
&&+\int_{D}\int_{A}
F_{p,\la,\tau}(x,y)^{1/r}dxdy+\int_{D}\int_{B}
F_{p,\la,\tau}(x,y)^{1/r}dxdy.
\end{eqnarray*}

Let $\psi$ be the function defined in the proof of Lemma \ref{computr}. Using the monotonicity of $\psi$, we obtain
\begin{eqnarray*}
\int_{C}\int_{B}
F_{p,\la,\tau}(x,y)^{1/r}dxdy&=&\int_C\int_{\{x\in B:p(x)\ge p(y)\}}\psi\big(p(x),p(y)\big)^{1/r}dxdy\\
&\le& |B|\int_{C}\psi\big((p\chi_Q)^*(\la|Q|),p(y)\big)^{1/r}dy
\end{eqnarray*}
and
\begin{eqnarray*}
\int_{C}\int_{A}
F_{p,\la,\tau}(x,y)^{1/r}dxdy&=&\int_{C}\int_{A}\psi\big(p(x),p(y)\big)^{1/r}dxdy\\
&\ge& |A|\int_{C}\psi\big((p\chi_Q)^*(\la|Q|),p(y)\big)^{1/r}dy.
\end{eqnarray*}
Since $|A|=(\ga_0-\la)|Q|\ge \ga_0|Q|/2$, we conclude that
\begin{eqnarray}
&&\int_{C}\int_{B}
F_{p,\la,\tau}(x,y)^{1/r}dxdy\label{CB}\\
&&\le \frac{2(1-\ga_0)}{\ga_0}\int_{C}\int_{A}
F_{p,\la,\tau}(x,y)^{1/r}dxdy. \nonumber
\end{eqnarray}

Similarly,
$$\int_{D}\int_{A}
F_{p,\la,\tau}(x,y)^{1/r}dxdy\le |D|\int_A\psi\big(p(x),(p\chi_{Q})^*((1-\ga_0)|Q|)\big)^{1/r}dx$$
and
$$\int_{C}\int_{A}
F_{p,\la,\tau}(x,y)^{1/r}dxdy\ge |C|\int_A\psi\big(p(x),(p\chi_{Q})^*((1-\ga_0)|Q|)\big)^{1/r}dx,$$
from which
\begin{eqnarray}
&&\int_{D}\int_{A}
F_{p,\la,\tau}(x,y)^{1/r}dxdy\label{DA}\\
&&\le \frac{2(1-\ga_0)}{\ga_0}\int_{C}\int_{A}
F_{p,\la,\tau}(x,y)^{1/r}dxdy. \nonumber
\end{eqnarray}

Further,
\begin{eqnarray*}
&&\int_{D}\int_{B}
F_{p,\la,\tau}(x,y)^{1/r}dxdy\\
&&\le |D||B|\psi\big((p\chi_Q)^*(\ga_0|Q|),(p\chi_Q)^*((1-\ga_0)|Q|)\big)^{1/r}
\end{eqnarray*}
and
\begin{eqnarray*}
&&\int_{C}\int_{A}
F_{p,\la,\tau}(x,y)^{1/r}dxdy\\
&&\ge |C||A|\psi\big((p\chi_Q)^*(\ga_0|Q|),(p\chi_Q)^*((1-\ga_0)|Q|)\big)^{1/r},
\end{eqnarray*}
from which
\begin{eqnarray}
&&\int_{D}\int_{B}
F_{p,\la,\tau}(x,y)^{1/r}dxdy\label{DB}\\
&&\le \frac{4(1-\ga_0)^2}{\ga_0^2}\int_{C}\int_{A}
F_{p,\la,\tau}(x,y)^{1/r}dxdy. \nonumber
\end{eqnarray}
Combining (\ref{CB}), (\ref{DA}) and (\ref{DB}) yields
\begin{eqnarray*}
I_{p,\la,\tau}&\le& C_{\ga_0}\int_{C}\int_{A}
F_{p,\la,\tau}(x,y)^{1/r}dxdy\\
&=&C_{\ga_0}\int_C\int_A\psi\big(p(x),p(y)\big)^{1/r}dxdy.
\end{eqnarray*}

Fix an $y\in C$, and consider
$$I(y):=\int_A\psi\big(p(x),p(y)\big)^{1/r}dx.$$
Observe that since $(p\chi_Q)^*(\ga_0|Q|)\ge (p\chi_Q)^*((1-\ga_0)|Q|)$, we have $p(x)\ge p(y)$ for all $x\in A$.

Consider the case where $(p\chi_Q)^*(\ga_0|Q|)>p(y)$. Then $p(x)>p(y)$ for all $x\in A$.
Since the function
$$\f(\a):=\psi\big(\a,p(y)\big)$$
is strictly increasing for $\a\ge p(y)$,
by the first part of Lemma \ref{compos}, the sets $Q\setminus B$ and $E_{\la}$ can be represented as subsets
$$Q\setminus B\subset\{x\in Q:\f(p(x))\ge (\f(p)\chi_Q)^*(\ga_0|Q|)\}$$
and
$$E_{\la}\subset\{x\in Q:\f(p(x))\ge (\f(p)\chi_Q)^*(\la|Q|)\}.$$
Since $|Q\setminus B|=\ga_0|Q|$ and $|E_{\la}|=\la|Q|$, using Remark \ref{bsh} and applying again Lemma \ref{compos}, we obtain
\begin{eqnarray*}
I(y)&=&\int_{Q\setminus B}\f(p(x))^{1/r}dx-\int_{E_{\la}}\f(p(x))^{1/r}dx\\
&=&\int_{0}^{\ga_0|Q|}(\f(p)\chi_Q)^*(t)^{1/r}dt-\int_{0}^{\la|Q|}(\f(p)\chi_Q)^*(t)^{1/r}dt\\
&=&\int_{\la|Q|}^{\ga_0|Q|}(\f(p)\chi_Q)^*(t)^{1/r}dt=|Q|\int_{\la}^{\ga_0}\psi\big((p\chi_Q)^*(t|Q|),p(y)\big)^{1/r}dt.
\end{eqnarray*}

Suppose now that $(p\chi_Q)^*(\ga_0|Q|)=p(y)$. Here we have two subcases. If $(p\chi_Q)^*(t|Q|)=p(y)$ for all $t\in [\la,\ga_0]$, then $p(x)=p(y)$ for all $x\in A$, and hence $I(y)=0$.
Otherwise, define
$$t_0:=\sup\{t\in [\la,\ga_0]:(p\chi_Q)^*(t|Q|)>p(y)\}.$$
Since the rearrangement is right-continuous, we obtain that $t_0\in (\la,\ga_0)$, and, moreover $(p\chi_Q)^*(t_0|Q|)=p(y)$ and $(p\chi_Q)^*(t|Q|)>p(y)$ for all $t\in [\la,t_0)$.
Thus, by absolute continuity, we have
$$I(y)=\lim_{\e\to 0}\int_{A_{\e}}\psi\big(p(x),p(y)\big)^{1/r}dx,$$
where
$$A_{\e}\subset \{x\in Q: (p\chi_Q)^*((t_0-\e)|Q|)\le p(x)\le (p\chi_Q)^*(\la|Q|)\}$$
with $|A_{\e}|=(t_0-\e-\la)|Q|$.
From this, exactly as above, we obtain
\begin{eqnarray*}
I(y)&=&\lim_{\e\to 0}|Q|\int_{\la}^{t_0-\e}\psi\big((p\chi_Q)^*(t|Q|),p(y)\big)^{1/r}dt\\
&=&|Q|\int_{\la}^{t_0-\e}\psi\big((p\chi_Q)^*(t|Q|),p(y)\big)^{1/r}dt.
\end{eqnarray*}

Unifying all the cases considered above yields
$$
I(y)\le |Q|\int_{\la}^{\ga_0}\psi\big((p\chi_Q)^*(t|Q|),p(y)\big)^{1/r}dt \quad(y\in C).
$$
Hence,
\begin{equation}\label{iplt}
I_{p,\la,\tau}\le C_{\ga_0}|Q|\int_{\la}^{\ga_0}\int_C\psi\big((p\chi_Q)^*(t|Q|),p(y)\big)^{1/r}dydt.
\end{equation}

Fix now a $t\in [\la,\ga_0]$ and consider
$$J(t):=\int_C\psi\big((p\chi_Q)^*(t|Q|),p(y)\big)^{1/r}dy.$$
To estimate this part, we use almost the same ideas as for $I(y)$ except minor technical details which we outline briefly.

Consider the case where $(p\chi_Q)^*((1-\ga_0)|Q|)<(p\chi_Q)^*(t|Q|)$. Define
$$\eta(\b):=\psi\big((p\chi_Q)^*(t|Q|),\b\big).$$
Then the sets $Q\setminus D$ and $G_{\tau}$ can be represented as subsets
$$Q\setminus D\subset \{y\in Q: \eta(p(y))\ge \eta\big((p\chi_Q)^*((1-\ga_0)|Q|)\big)\}$$
and
$$G_{\tau}\subset \{y\in Q: \eta(p(y))\ge \eta\big((p\chi_Q)^*((1-\tau)|Q|)\big)\}.$$

Now, the minor technical difference with the estimate for $I(y)$ is that, by the second part of Lemma \ref{compos},
the equality
\begin{equation}\label{imeq}
\eta\big((p\chi_Q)^*((1-s)|Q|)\big)=(\eta(p)\chi_Q)^*(s|Q|)
\end{equation}
holds for almost all $s\in (0,1)$. If (\ref{imeq}) holds for $s=\ga_0$ and $s=\tau$, then, exactly as above we obtain that
\begin{eqnarray}
J(t)&=&\int_{Q\setminus D}\eta(p(y))^{1/r}dy-\int_{G_{\tau}}\eta(p(y))^{1/r}dy\nonumber\\
&=&\int_{0}^{\ga_0|Q|}(\eta(p)\chi_Q)^*(s)^{1/r}ds-\int_{0}^{\la|Q|}(\eta(p)\chi_Q)^*(s)^{1/r}ds\nonumber\\
&=&\int_{\tau|Q|}^{\ga_0|Q|}\eta\big((p\chi_Q)^*(|Q|-s)\big)^{1/r}ds\nonumber\\
&=&|Q|\int_{\tau}^{\ga_0}\psi\big((p\chi_Q)^*(t|Q|),(p\chi_Q)^*((1-s)|Q|)\big)^{1/r}ds.\label{jt}
\end{eqnarray}
In the case if (\ref{imeq}) does not hold for $s=\ga_0$ or for $s=\tau$ (or for both of them), we can find sequences $\{\ga_k\}$ and $\{\tau_k\}$
such that $\ga_k\to \ga_0$ and $\tau_k\to \tau$ as $k\to \infty$ and (\ref{imeq}) holds for $s=\ga_k$ and $s=\eta_k$ for all $k$. Then, changing slightly the sets
$Q\setminus D$ and $G_{\tau}$ and using absolute continuity of the corresponding integrals, we will obtain the same estimate for $J(t)$.

Suppose now that $(p\chi_Q)^*((1-\ga_0)|Q|)=(p\chi_Q)^*(t|Q|)$. Then, exactly as above, we obtain that if
$(p\chi_Q)^*((1-s)|Q|)=(p\chi_Q)^*(t|Q|)$ for all $s\in [\tau,\ga_0]$, then $J(t)=0$, and, otherwise (\ref{jt}) holds for $t_0$ instead of $\ga_0$,
where
$$
t_0:=\sup\{s\in [\tau,\ga_0]: (p\chi_Q)^*((1-s)|Q|)<(p\chi_Q)^*(t|Q|)\}.
$$
In all cases we obtain that $J(t)$ is bounded by the expression in (\ref{jt}).

Combining (\ref{jt}) with (\ref{iplt}) yields
$$
I_{p,\la,\tau}\le C_{\ga_0}|Q|^2\int_{\la}^{\ga_0}\int_{\tau}^{\ga_0}\psi\big((p\chi_Q)^*(t|Q|),(p\chi_Q)^*((1-s)|Q|)\big)^{1/r}dsdt,
$$
which, along with (\ref{psibig}), proves (\ref{itss}), and therefore, the proof is complete.
\end{proof}

\begin{remark}\label{repap}
As we mentioned in the Introduction, the $A_{p(\cdot)}$ condition is a natural replacement of the $LH_0$ condition in earlier sufficient conditions for $p(\cdot)\in {\mathcal P}$ as it is trivially necessary.

On the other hand, from the practical point of view, the $LH_0$ condition is much easier to check. Therefore, it is natural to ask whether the $A_{p(\cdot)}$ condition in Theorem \ref{mr} can be changed by the
$LH_0$ condition in order to provide an easier to check sufficient condition (but which will not be necessary). Observe that this cannot be obtained directly from Theorem \ref{mr}, since the $LH_0$ condition
does not imply the $A_{p(\cdot)}$ condition, in general (see, e.g., \cite[Ex. 4.58]{CUF}).

Nevertheless, the answer to this question is positive. Indeed, the $A_{p(\cdot)}$ condition was used only once in the proof of Theorem \ref{char1} in order to establish (\ref{wws}). To be more precise, (\ref{wws}) represents
the following property: there exists $C>0$ such that if $\|f\|_{L^{p(\cdot)}}\le 1$ and $f_1:=f\chi_{\{f\ge 1\}}$, then $\|Mf_1\|_{L^{p(\cdot)}}\le C$. However, it is well known that the $LH_0$ condition is sufficient for
this property (see, e.g., in \cite[p. 99]{CUF}).

Hence, the proof of Theorem \ref{char1} shows that $LH_0\cap S_{\infty}\Rightarrow {\mathcal P}$. It is straightforward to check that the $A_{p(\cdot)}$ condition can be replaced by the $LH_0$ condition in all subsequent statements, providing only the sufficiency part. Thus, we have the following.

\begin{prop}\label{stlh}
Assume that $1<p_-\le p_+<\infty$. If $p(\cdot)\in LH_0\cap {\mathcal U}_{\infty}$, then $p(\cdot)\in {\mathcal P}$.
\end{prop}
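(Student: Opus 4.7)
The plan is to recycle the chain of equivalences leading to Theorem \ref{mr} and locate the sole place where the $A_{p(\cdot)}$ condition was actually invoked, replacing that one step with the corresponding classical $LH_0$-based estimate. Since only the sufficiency direction is required here, this single substitution suffices.

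First I would audit the implications that transport $\mathcal{U}_\infty$ down to the $S_\infty$ condition: the equivalence $\mathcal{U}_\infty \Leftrightarrow (\ref{intcon})$ inside the proof of Theorem \ref{mr}, the equivalences $(\ref{intcon}) \Leftrightarrow (\ref{weakf}) \Leftrightarrow (\ref{strf})$ in Theorems \ref{yac}, \ref{fstm1}, and \ref{fstm}, the equivalence with the $S'_\infty$ condition in Lemma \ref{aneq} and Theorem \ref{char2}, and the final step $S'_\infty \Rightarrow S_\infty$ contained in the proof of Theorem \ref{char2}. A careful reading confirms that each of these manipulations is purely analytic in nature — rearrangement identities, Fubini's theorem, Chebyshev's inequality, and straightforward comparisons of power functions — and at no point invokes $A_{p(\cdot)}$. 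Hence under $1 < p_- \le p_+ < \infty$, the condition $\mathcal{U}_\infty$ alone already implies $S_\infty$.

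Next I would reopen the proof of $A_{p(\cdot)} \cap S_\infty \Rightarrow \mathcal{P}$ in Theorem \ref{char1}. There the argument splits $f = f_1 + f_2$ with $f_1 := f\chi_{\{f \ge 1\}}$, and handles the small part $f_2$ using only Diening's reduction (Theorem \ref{Di}) together with the $S_\infty$ hypothesis — no use of $A_{p(\cdot)}$. The $A_{p(\cdot)}$ assumption enters exactly once, to derive (\ref{wws}) through the localized estimate (\ref{tams}). This is precisely the step I would replace: under $LH_0$ it is classical (see, e.g., \cite[p.~99]{CUF}) that if $\|f\|_{L^{p(\cdot)}} \le 1$ and $f_1 = f \chi_{\{f \ge 1\}}$, then $\|M f_1\|_{L^{p(\cdot)}} \le C$, which is exactly (\ref{wws}). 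Plugging this classical estimate in place of the $A_{p(\cdot)}$-based derivation yields (\ref{wws}) and leaves the remainder of the proof of Theorem \ref{char1} intact.

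The main obstacle is not conceptual but bookkeeping: one must verify that no intermediate lemma in the long equivalence chain — in particular, the passage between $\xi_Q$ and $t_Q$ in Lemma \ref{aneq}, the density argument extending $\tau \in (0,1)$ in Theorem \ref{char2}, and the iterated rearrangement computation of Lemma \ref{computr} — secretly relies on $A_{p(\cdot)}$. Inspection shows they do not. Combining the two observations above delivers $LH_0 \cap \mathcal{U}_\infty \Rightarrow S_\infty$ and, together with the classical bound on $\|M f_1\|_{L^{p(\cdot)}}$, produces the desired conclusion $p(\cdot) \in \mathcal{P}$.
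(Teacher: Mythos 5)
Your proposal is correct and follows essentially the same route as the paper: Remark \ref{repap} likewise observes that $A_{p(\cdot)}$ enters the sufficiency chain only once, in establishing (\ref{wws}) within the proof of Theorem \ref{char1}, and that the classical $LH_0$ estimate for $\|Mf_1\|_{L^{p(\cdot)}}$ can be substituted there, the rest of the equivalences from $\mathcal{U}_{\infty}$ down to $S_{\infty}$ being independent of $A_{p(\cdot)}$.
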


We emphasize that this Proposition can be more useful that Theorem~\ref{mr} in some situations but of course Theorem \ref{mr} is more general.
\end{remark}

\section{More about the ${\mathcal U}_{\infty}$ condition}\label{ss6}
In this section we consider different aspects related to the ${\mathcal U}_{\infty}$ condition. In particular, we obtain an easy to check but still powerful condition
implying that $p(\cdot)\in {\mathcal U}_{\infty}$.
We also provide several new examples of the exponents $p(\cdot)\in {\mathcal P}$.

\subsection{A comparison with the $N_{\infty}$ condition}\label{ss61}
In order to feel better the  ${\mathcal U}_{\infty}$ condition, let us show that this condition follows trivially from
the ${N}_{\infty}$ condition defined in Definition \ref{nekc}. Indeed, the proof below shows that in the case where $p(\cdot)\in N_{\infty}$,
we even do not need to subtract the sets $G\subset Q$ and $E\subset Q$ in the definition of the ${\mathcal U}_{\infty}$ condition, namely,
$$
\inf_{E\subset Q:|E|=\la|Q|\atop G\subset Q:|G|=\tau|Q|}\int_{Q\setminus G}\int_{Q\setminus E}F_{p,\la,\tau}(x,y)^{1/r}dxdy
$$
can be replaced by
$$
\int_{Q}\int_{Q}F_{p,\la,\tau}(x,y)^{1/r}dxdy.
$$

Let us start with the following auxiliary statement.

\begin{prop}\label{convl} Assume that $0<\d\le {\rm e}^{-8p_+}$. Then, for any cube $Q\subset {\mathbb R}^n$,
\begin{equation}\label{doubc}
\int_Q\int_Q\d^{\frac{1}{|p(x)-p(y)|}}dxdy\le |Q|\inf_{\nu\in [p_-,p_+]}\int_Q\d^{\frac{1}{2|p(x)-\nu|}}dx.
\end{equation}
\end{prop}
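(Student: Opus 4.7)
The plan is to prove the inequality by a careful triangle-inequality-plus-convexity argument, exploiting the smallness hypothesis $\delta\le e^{-8p_+}$ to absorb constants. The key elementary observation is that under this hypothesis (writing $L:=-\log\delta\ge 8p_+$), for every $t\in(0,p_+]$ one has $2\delta^{1/t}\le \delta^{1/(2t)}$, since this is equivalent to $\log 2\le L/(2t)$, which follows from $L\ge 8p_+\ge 2t\log 2$. This is the mechanism by which factors of two arising from pointwise max/sum estimates will be absorbed.

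I would fix $\nu\in[p_-,p_+]$ and partition $Q=A\cup\{p=\nu\}\cup B$ with $A=\{p>\nu\}$, $B=\{p<\nu\}$; the set $\{p=\nu\}$ contributes nothing because of the convention $\delta^\infty=0$. On the diagonal blocks $A\times A$ and $B\times B$, the sharp triangle inequality $|p(x)-p(y)|\le\max(|p(x)-\nu|,|p(y)-\nu|)$ holds (without the usual factor of two, because $p(x),p(y)$ lie on the same side of $\nu$), so
$$\delta^{1/|p(x)-p(y)|}\le \delta^{1/|p(x)-\nu|}+\delta^{1/|p(y)-\nu|}.$$
Integrating this bound and replacing each $\delta^{1/|p-\nu|}$ by $\tfrac12\delta^{1/(2|p-\nu|)}$ via the key inequality yields
$$\int\!\!\int_{A\times A}\delta^{1/|p(x)-p(y)|}\,dxdy\le |A|\int_A\delta^{1/(2|p-\nu|)}dx,$$
and analogously for $B\times B$.

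On the cross blocks $A\times B$ and $B\times A$, one has the exact equality $|p(x)-p(y)|=|p(x)-\nu|+|p(y)-\nu|$, and the convexity of $t\mapsto\delta^t$ (which is valid since $\delta<1$) yields, with $a=|p(x)-\nu|$, $b=|p(y)-\nu|$, the weighted AM--GM bound
$$\delta^{1/(a+b)}\le\frac{a}{a+b}\delta^{1/(2a)}+\frac{b}{a+b}\delta^{1/(2b)}.$$
Integrating with the trivial bounds on the weights gives $\int\!\!\int_{A\times B}\delta^{1/|p-p|}\,dxdy\le |B|\int_A\delta^{1/(2|p-\nu|)}dx+|A|\int_B\delta^{1/(2|p-\nu|)}dx$ and similarly for $B\times A$. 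Summing the four pieces produces the estimate
$$\int\!\!\int_{Q\times Q}\delta^{1/|p(x)-p(y)|}\,dxdy\le(|A|+2|B|)\int_A\delta^{1/(2|p-\nu|)}dx+(2|A|+|B|)\int_B\delta^{1/(2|p-\nu|)}dx.$$

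Finally, the infimum over $\nu$ is reached by eliminating the cross contribution: taking $\nu=p_+$ forces $|A|=0$ (and symmetrically $\nu=p_-$ forces $|B|=0$), so the cross terms disappear and the remaining diagonal estimate collapses to $|Q|\int_Q\delta^{1/(2|p-\nu|)}dx$, which is exactly the right-hand side of \eqref{doubc}. The main obstacle I expect to contend with is precisely this last step: ensuring that the cross-block bounds do not prevent the factor-one conclusion—the crude bound $a/(a+b),b/(a+b)\le 1$ is lossy, and the hypothesis $\delta\le e^{-8p_+}$ provides the headroom needed to absorb the residual factor of two. If the extremal choices $\nu=p_\pm$ alone are not enough for the intermediate analysis, a refined estimate of the weight integrals $\int_B a/(a+b)\,dy$ (via a distribution-balancing choice of $\nu$) would close the argument.
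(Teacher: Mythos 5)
Your block decomposition and the individual pointwise estimates are mostly sound, but the final assembly does not prove the stated inequality. The right-hand side of \eqref{doubc} is an infimum over \emph{all} $\nu\in[p_-,p_+]$, so you must establish
$\int_Q\int_Q\d^{1/|p(x)-p(y)|}\le|Q|\int_Q\d^{1/(2|p(x)-\nu|)}$ for \emph{every} such $\nu$ (including interior ones), and then take the infimum. Your four-block sum gives $(|A|+2|B|)\int_A+(|B|+2|A|)\int_B$, which in general is as large as $\tfrac32|Q|\int_Q$ or worse, and your proposed repair --- choosing $\nu=p_\pm$ so that one block is empty --- only yields the bound for those two endpoint values. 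That is strictly weaker: if $p$ equals some interior value $\nu_0$ on most of $Q$, then $\int_Q\d^{1/(2|p-\nu_0|)}$ is essentially zero (by the convention $\d^\infty=0$) while $\int_Q\d^{1/(2|p-p_\pm|)}$ is not, so the infimum is genuinely attained in the interior and your conclusion misses it. (This matters for the application: in Proposition \ref{nekimp} the relevant $\nu$ is the Nekvinda constant $p_\infty$, which need not be an endpoint.) A secondary slip: the set $\{p=\nu\}$ does \emph{not} contribute nothing to the left-hand side --- the convention kills the integrand only where $p(x)=p(y)$, not where $p(x)=\nu$ --- though this is easily patched by absorbing $\{p=\nu\}$ into $A$ or $B$.

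The missing idea is that no decomposition is needed at all. The paper's proof observes that $\f(t):=\d^{1/t}$ satisfies
$\f''(t)=\log(1/\d)\,t^{-3}\d^{1/t}\bigl(\tfrac{\log(1/\d)}{t}-2\bigr)$, hence is convex on $\bigl(0,\tfrac{\log(1/\d)}{2}\bigr]$; since $\d\le\ee^{-8p_+}$, the numbers $2|p(x)-\nu|$ and $2|p(y)-\nu|$ lie in this interval, and midpoint convexity gives the uniform pointwise bound
$$\d^{\frac{1}{|p(x)-p(y)|}}\le\d^{\frac{1}{|p(x)-\nu|+|p(y)-\nu|}}\le\frac{1}{2}\Bigl(\d^{\frac{1}{2|p(x)-\nu|}}+\d^{\frac{1}{2|p(y)-\nu|}}\Bigr)$$
for every $\nu$ and all $x,y\in Q$, which integrates to exactly $|Q|\int_Q\d^{1/(2|p-\nu|)}$ with no leftover factor. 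Note this is convexity of $t\mapsto\d^{1/t}$ on the range permitted by the hypothesis on $\d$ --- which is precisely where the assumption $\d\le\ee^{-8p_+}$ is used --- not convexity of $s\mapsto\d^{s}$, which is what your weighted cross-term bound invokes and which produces the lossy weights $\tfrac{a}{a+b},\tfrac{b}{a+b}$. Replacing your cross-term estimate by this midpoint-convexity bound (and applying it on the diagonal blocks too) would repair your argument, but at that point the block decomposition becomes superfluous.
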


\begin{proof}
For $\d\in (0,1)$ and $t>0$ consider the function $\f(t):=\d^{1/t}$. Observing that
$$\f''(t)=\Big(\log\frac{1}{\d}\Big)t^{-3}\d^{1/t}\Big(\frac{\log(1/\d)}{x}-2\Big),$$
we have that $\f$ is convex on $\Big(0, \frac{\log(1/\d)}{2}\Big]$.
From this, assuming that $\nu\in [p_-,p_+]$ and $8p_+\le \log(1/\d)$, we obtain
\begin{eqnarray*}
\d^{\frac{1}{|p(x)-p(y)|}}&\le& \d^{\frac{1}{|p(x)-\nu|+|p(y)-\nu|}}\\
&\le& \frac{1}{2}\Big(\d^{\frac{1}{2|p(x)-\nu|}}+\d^{\frac{1}{2|p(y)-\nu|}}\Big),
\end{eqnarray*}
from which (\ref{doubc}) follows.
\end{proof}

\begin{prop}\label{nekimp} Assume that $1<p_-\le p_+<\infty$. Then $p(\cdot)\in N_{\infty}\Rightarrow p(\cdot)\in {\mathcal U}_{\infty}$.
\end{prop}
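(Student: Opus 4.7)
The plan is essentially the one hinted at in the passage just before the proposition: bound $F_{p,\lambda,\tau}$ pointwise by a symmetric expression $\delta^{1/|p(x)-p(y)|}$, throw away the subtracted sets $E,G$, and then feed the result into Proposition \ref{convl} and the $N_\infty$ hypothesis.

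More concretely, I would fix $r>1$ (any fixed value, say $r=2$, works) and let $p_\infty$ and $0<c<1$ be as in the $N_\infty$ condition. Note that necessarily $p_\infty\in[p_-,p_+]$, since otherwise $|p(x)-p_\infty|$ would be bounded below by a positive constant on all of $\mathbb{R}^n$ and the integral $\int c^{1/|p-p_\infty|}$ would be infinite. Working from the second, equivalent form of $F_{p,\lambda,\tau}$, one has on the set $\{p(x)>p(y)\}$
\[
F_{p,\lambda,\tau}(x,y)=\bigl(\tau^{p(y)}\lambda^{p(x)(p(y)-1)}\bigr)^{\frac{1}{p(x)-p(y)}}.
\]
Since $0<\lambda,\tau<\gamma<1$, since $p(y)\ge p_-$, and since $p(x)(p(y)-1)\ge p_-(p_--1)\ge 0$, monotonicity of $a\mapsto a^b$ for $a\in(0,1)$ in $b$ gives $\tau^{p(y)}\lambda^{p(x)(p(y)-1)}\le\gamma^{p_-}\gamma^{p_-(p_--1)}=\gamma^{p_-^2}$, so setting $\delta:=\gamma^{p_-^2/r}$ yields
\[
F_{p,\lambda,\tau}(x,y)^{1/r}\le \delta^{\frac{1}{p(x)-p(y)}}\qquad(p(x)>p(y)).
\]
By the convention $\delta^\infty=0$, the bound $F_{p,\lambda,\tau}(x,y)^{1/r}\le\delta^{1/|p(x)-p(y)|}$ then holds on all of $Q\times Q$ (the left side vanishes when $p(x)\le p(y)$).

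Next, I would just drop the infimum and the subtracted sets, estimating for every cube $Q$,
\[
\inf_{E,G}\int_{Q\setminus G}\int_{Q\setminus E}F_{p,\lambda,\tau}(x,y)^{1/r}\,dx\,dy\le \int_Q\int_Q \delta^{\frac{1}{|p(x)-p(y)|}}\,dx\,dy.
\]
Now pick $\gamma$ small enough that $\delta=\gamma^{p_-^2/r}\le\mathrm{e}^{-8p_+}$, so that Proposition \ref{convl} applies with $\nu=p_\infty\in[p_-,p_+]$:
\[
\int_Q\int_Q \delta^{\frac{1}{|p(x)-p(y)|}}\,dx\,dy\le |Q|\int_Q\delta^{\frac{1}{2|p(x)-p_\infty|}}\,dx.
\]
Shrink $\gamma$ further so that additionally $\delta^{1/2}\le c$, which replaces $\delta^{1/(2|p(x)-p_\infty|)}$ by $c^{1/|p(x)-p_\infty|}$. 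Dividing by $|Q|$ and summing over the pairwise disjoint family $\mathcal{F}$ then collapses to
\[
\sum_{Q\in\mathcal{F}}\frac{1}{|Q|}\inf_{E,G}\int_{Q\setminus G}\int_{Q\setminus E}F_{p,\lambda,\tau}^{1/r}\le\sum_{Q\in\mathcal{F}}\int_Q c^{\frac{1}{|p(x)-p_\infty|}}\,dx\le\int_{\mathbb{R}^n}c^{\frac{1}{|p(x)-p_\infty|}}\,dx,
\]
which is finite by $N_\infty$, giving the required uniform bound independent of $\lambda,\tau,\mathcal{F}$.

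There is no real obstacle here; the only thing to be careful about is choosing $\gamma$ small enough to satisfy the two threshold requirements ($\delta\le\mathrm{e}^{-8p_+}$ to invoke Proposition \ref{convl}, and $\delta^{1/2}\le c$ to absorb into the $N_\infty$ integrand), but both are one-sided constraints on $\gamma$ and are compatible. The whole proof illustrates the paper's earlier remark that $N_\infty$ implies $\mathcal{U}_\infty$ in the crudest possible way, without ever exploiting the freedom to remove the subsets $E$ and $G$.
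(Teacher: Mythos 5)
Your proof is correct and follows essentially the same route as the paper's: the crude pointwise bound $F_{p,\la,\tau}^{1/r}\le \ga^{p_-^2/(r|p(x)-p(y)|)}$, discarding the sets $E,G$, and then Proposition \ref{convl} with $\nu=p_{\infty}$ together with the choice $\ga^{p_-^2/r}<\min({\rm e}^{-8p_+},c^2)$. The paper's version is identical in substance (it merely keeps the infimum over $\nu\in[p_-,p_+]$ one step longer before specializing to $p_\infty$).
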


\begin{proof} By the $N_{\infty}$ condition, there exist $0<c<1$ and $p_{\infty}>0$ such that
\begin{equation}\label{nc}
\int_{{\mathbb R}^n}c^{\frac{1}{|p(x)-p_{\infty}|}}dx<\infty.
\end{equation}
We make an obvious observation that if (\ref{nc}) holds, then $p_{\infty}\in [p_-,p_+]$.

Let $0<\ga<1/2$ to be chosen and let $0<\la,\tau<\ga$. Then
\begin{eqnarray*}
F_{p,\la,\tau}(x,y)&=&\big(\tau^{p(y)}\la^{p(x)(p(y)-1)}\big)^{\frac{1}{p(x)-p(y)}}\chi_{\{(x,y):p(x)>p(y)\}}\\
&\le& \ga^{\frac{{p_-^2}}{|p(x)-p(y)|}}.
\end{eqnarray*}
Let $r>1$. Take $\ga$ such that
$$\ga^{p_-^2/r}<\min({\rm e}^{-8p_+},c^2),$$
where $c$ is from (\ref{nc}). Then, by Proposition \ref{convl},
\begin{eqnarray*}
\int_Q\int_QF_{p,\la,\tau}(x,y)^{1/r}dxdy&\le& |Q|\inf_{\nu\in [p_-,p_+]}\int_Q\ga^{\frac{p_-^2}{2r|p(x)-\nu|}}dx\\
&\le& |Q|\inf_{\nu\in [p_-,p_+]}\int_Qc^{\frac{1}{|p(x)-\nu|}}dx.
\end{eqnarray*}
Therefore, for every finite family of pairwise disjoint cubes ${\mathcal F}$,
\begin{eqnarray*}
&&\sum_{Q\in {\mathcal F}}\frac{1}{|Q|}\inf_{E\subset Q:|E|=\la|Q|\atop G\subset Q:|G|=\tau|Q|}\int_{Q\setminus G}\int_{Q\setminus E}F_{p,\la,\tau}(x,y)^{1/r}dxdy\\
&&\le \sum_{Q\in {\mathcal F}}\frac{1}{|Q|}\int_{Q}\int_{Q}F_{p,\la,\tau}(x,y)^{1/r}dxdy\\
&&\le \sum_{Q\in {\mathcal F}}\inf_{\nu\in [p_-,p_+]}\int_Qc^{\frac{1}{|p(x)-\nu|}}dx\le \int_{{\mathbb R}^n}c^{\frac{1}{|p(x)-p_{\infty}|}}dx\le C,
\end{eqnarray*}
and thus, the ${\mathcal U}_{\infty}$ condition holds.
\end{proof}

\subsection{A useful sufficient condition}\label{ss62}
In this section we establish a much more powerful (compared to $N_{\infty}$) condition implying $p(\cdot)\in {\mathcal U}_{\infty}$, where it is crucial that we can remove subsets $E,G\subset Q$
in the definition of ${\mathcal U}_{\infty}$.
We start with the following auxiliary geometric statement.

\begin{lemma}\label{cubes}
Let $Q\subset {\mathbb R}^n$ be a cube such that $0\not\in Q$, and let $0<\d<1$. Then there exists a subset $E\subset Q$ with $|E|=\d|Q|$ such that
$$\frac{|x|}{|y|}\le \frac{\sqrt{n}}{\d}$$
for all $x\in Q$ and $y\in Q\setminus E$.
\end{lemma}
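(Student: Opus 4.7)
The plan is to remove from $Q$ a thin coordinate slab chosen so that the remaining points are bounded below in norm by a definite multiple of $M := \max_{x \in Q} |x|$. Write $Q = \prod_{j=1}^n [a_j, a_j+\ell]$ with side length $\ell$, and let $x^* \in Q$ be a vertex attaining $|x^*| = M$; then $|x^*_j| = \max(|a_j|, |a_j+\ell|)$ for each coordinate $j$. I would pick the coordinate $i$ for which $|x^*_i|$ is largest. The elementary inequality $|v| \le \sqrt{n}\,\|v\|_\infty$ applied to $x^*$ gives $|x^*_i| \ge M/\sqrt{n}$.

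The key observation is that this $i$ also satisfies $0 \notin [a_i, a_i + \ell]$. Indeed, if $0$ lay in that interval then both $|a_i|$ and $|a_i+\ell|$ would be at most $\ell$, so $|x^*_i| \le \ell$; but the hypothesis $0 \notin Q$ forces some $k$ with $0 \notin [a_k, a_k+\ell]$, i.e.\ $a_k > 0$ or $a_k + \ell < 0$, and then $|x^*_k| > \ell \ge |x^*_i|$, contradicting the maximality of $|x^*_i|$. After reflecting the $i$-th coordinate if necessary, I may assume $a_i > 0$; then $x^*_i = a_i + \ell$ and so $a_i + \ell \ge M/\sqrt{n}$.

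The set will be $E := \{y \in Q : y_i < a_i + \delta \ell\}$, a slab of $i$-thickness $\delta\ell$ and measure $\delta\ell \cdot \ell^{n-1} = \delta |Q|$. For $y \in Q \setminus E$ one has $y_i \ge a_i + \delta\ell$, and since $(1-\delta)a_i \ge 0$ this rewrites as $y_i \ge \delta(a_i + \ell) \ge \delta M/\sqrt{n}$. Because $a_i > 0$, the bound $|y| \ge y_i \ge \delta M/\sqrt{n}$ follows, and combined with $|x| \le M$ this yields $|x|/|y| \le \sqrt{n}/\delta$ for every $x \in Q$ and every $y \in Q \setminus E$.

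The only real difficulty is the coordinate choice: one needs a single index $i$ that simultaneously avoids $0$ in its projection of $Q$ (so that the slab lies on one side of the origin) and carries a definite fraction of $M$ (so that the slab is thin enough to have measure $\le \delta |Q|$). This is precisely where the hypothesis $0 \notin Q$ meets the inequality $M \le \sqrt{n}\,\|x^*\|_\infty$; choosing $i$ to maximise $|x^*_i|$ is what makes both requirements hold at once.
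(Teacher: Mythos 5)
Your proof is correct and follows essentially the same construction as the paper's: remove a coordinate slab of relative thickness $\delta$ on the side of $Q$ nearest the origin, in a direction chosen so that the relevant coordinate carries at least a $1/\sqrt{n}$ fraction of the norm. Your selection of that direction via the farthest vertex $x^*$ (rather than the paper's nearest boundary point $\xi$) is a nice touch, since the observation that the maximal coordinate of $x^*$ automatically has its projection interval on one side of $0$ absorbs the paper's separate case analysis for cubes not contained in a single quadrant.
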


\begin{proof} Denote $Q:=\prod_{j=1}^n(a_j,a_j+h), h>0$.
Take $\xi\in \partial Q$ such that $\inf_{z\in Q}|z|=|\xi|.$

Consider the case where $Q$ is contained in one of $2^n$ quadrants.
Then for every $1\le j\le n$ we have that either $a_j\ge 0$ or $a_j+h\le 0$.
Also, by a simple geometry, $\xi$ is one of the vertices of $\overline Q$, and for each coordinate $\xi_j$ of $\xi$ we have
that
$$
\xi_j=
\begin{cases}
a_j,& a_j\ge 0\\
a_j+h,& a_j+h\le 0
\end{cases}
$$
for every $j=1,\dots,n$.

Suppose that $\max_{1\le j\le n}|\xi_j|=|\xi_{j_0}|$. If $a_{j_0}\ge 0$, take
$$E:=\prod_{j\not=j_0}(a_j,a_j+h)\times (a_{j_0},a_{j_0}+\d h).$$
Then $|E|=\d|Q|$, and $|y|\ge a_{j_0}+\d h$ for every $y\in Q\setminus E$.
On the other hand, for every $x\in Q$,
$$|x|\le |\xi|+\sqrt{n}h\le \sqrt{n}(a_{j_0}+h).$$
Hence, if $x\in Q$ and $y\in Q\setminus E$, then
$$\frac{|x|}{|y|}\le \sqrt{n}\frac{a_{j_0}+h}{a_{j_0}+\d h}\le \frac{\sqrt n}{\d}.$$

If $a_{j_0}+h\le 0$, take
$$E:=\prod_{j\not=j_0}(a_j,a_j+h)\times (a_{j_0}+(1-\d)h,a_{j_0}+h).$$
Then $|E|=\d|Q|$ and for every $y\in Q\setminus E$,
$$|y|\ge |a_{j_0}+(1-\d)h|=|a_{j_0}+h|+\d h.$$
Therefore, for all $x\in Q$ and $y\in Q\setminus E$,
$$\frac{|x|}{|y|}\le \sqrt{n}\frac{|a_{j_0}+h|+h}{|a_{j_0}+h|+\d h}\le \frac{\sqrt n}{\d}.$$

Consider now the case where $Q$ is not contained in one of $2^n$ quadrants. Observe that since $0\not\in Q$, such a situation is possible when $n\ge 2$.
Then $Q$ has non-empty intersection with one of the coordinate axes, for example, with $x=x_{j_0}$.
There are two cases. If $Q$ is contained in the upper half space
$$\{(x_1,\dots,x_n)\in {\mathbb R}^n: x_{j_0}\ge 0\},$$
then the coordinates of the closest point $\xi\in \partial Q$ to the origin will be
$$
\xi_j=
\begin{cases}
a_{j_0},& j=j_0\\
0,& j\not=j_0,
\end{cases}
$$
where $a_{j_0}\ge 0$.
If $Q$ is contained in the lower half space
$$\{(x_1,\dots,x_n)\in {\mathbb R}^n: x_{j_0}\le 0\},$$
then
$$
\xi_j=
\begin{cases}
a_{j_0}+h,& j=j_0\\
0,& j\not=j_0,
\end{cases}
$$
where $a_{j_0}+h\le 0$.
In both cases we can take the same sets $E$ as above, and the same estimates hold.
Therefore, the proof is complete.
\end{proof}

We are now ready to prove the main result of this section.

\begin{theorem}\label{ussc}
Assume that $1<p_-\le p_+<\infty$ and that $p(\cdot)$ is a radial exponent on ${\mathbb R}^n$. Suppose that there exist $N>1$
and $\a$ satisfying
$$0<\a<\frac{p_-\min(1,p_--1)}{n}$$
such that for all $|x|\ge |y|\ge N$,
\begin{equation}\label{condd}
|p(x)-p(y)|\le \a\frac{1}{\log |y|}\log\frac{|x|}{|y|}.
\end{equation}
Then $p(\cdot)\in {\mathcal U}_{\infty}$.
\end{theorem}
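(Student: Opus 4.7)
I plan to verify the $\mathcal{U}_{\infty}$ condition directly for each cube $Q\in\mathcal F$, using Lemma~\ref{cubes} to select the subsets $E,G$. For a cube $Q$ with $0\notin Q$, I would apply Lemma~\ref{cubes} with parameters $\delta=\lambda$ and $\delta=\tau$ to select $E,G\subset Q$ of the required measures consisting of the points closest to the origin. Writing $s:=\min(\lambda,\tau)$ and $R_Q:=\sup_{z\in Q}|z|$, this yields
\[
\min(|x|,|y|)\ge \tfrac{s}{\sqrt n}R_Q\quad\text{and}\quad \frac{\max(|x|,|y|)}{\min(|x|,|y|)}\le \tfrac{\sqrt n}{s}
\]
on $(Q\setminus E)\times(Q\setminus G)$, which are the geometric inputs that drive everything else.

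The key pointwise bound on $F_{p,\lambda,\tau}^{1/r}$ comes from two observations. First, since $0<\lambda,\tau<1$ and $p(x)(p(y)-1)\ge p_-(p_--1)>0$, the defining formula yields, on $\{p(x)>p(y)\}$,
\[
F_{p,\lambda,\tau}(x,y)^{1/r}\le s^{c_p/(r|p(x)-p(y)|)},\qquad c_p:=p_-\min(1,p_--1).
\]
Second, provided $R_Q\ge N\sqrt n/s$, both arguments exceed $N$ on our region, so hypothesis~\eqref{condd} gives $|p(x)-p(y)|\le \alpha\log(\sqrt n/s)/\log\min(|x|,|y|)$. Combining these,
\[
F_{p,\lambda,\tau}(x,y)^{1/r}\le \min(|x|,|y|)^{-A(s)},\qquad A(s):=\frac{c_p}{r\alpha}\cdot\frac{\log(1/s)}{\log(\sqrt n/s)}.
\]
Since $\log(1/s)/\log(\sqrt n/s)\to 1$ as $s\to 0$, shrinking $\gamma$ makes this ratio uniformly close to $1$ for $s<\gamma$. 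The hypothesis $\alpha<c_p/n$ then permits choosing $r>1$ such that $A(s)>n$ for all $s<\gamma$.

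What remains is to turn this pointwise decay into an integrated per-cube estimate of the form $\int_{Q\setminus G}\int_{Q\setminus E} F^{1/r}\le C|Q|\int_Q\psi$ for some fixed integrable $\psi$ (the natural candidate being $\psi(x)=(1+|x|)^{-n-\eta}$ for small $\eta>0$), after which summation over the disjoint family $\mathcal F$ yields $\le C\|\psi\|_{L^1}$. Cubes with $\rho_Q\ge R_Q/2$ admit a direct estimate via~\eqref{condd} without any $E,G$ removal; cubes with $R_Q<N\sqrt n/s$ (where the reduced region need not lie in $\{|x|\ge N\}$) and the at most $2^n$ cubes containing the origin are handled by separate arguments that exploit the dimensional fact---implicit in the proof of Lemma~\ref{cubes}---that at most $O_n(1)$ disjoint cubes with $\rho_Q<R_Q/2$ occur at each dyadic scale of $R_Q$. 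The hardest step will be the main-case integration, because a naive estimate of $\int\int \min(|x|,|y|)^{-A(s)}$ introduces $s$-dependent factors such as $(\sqrt n/s)^{A}$; the strict inequality $\alpha<c_p/n$ is sharp precisely because it supplies the slack in the decay exponent needed to absorb these factors uniformly in $s\in(0,\gamma)$.
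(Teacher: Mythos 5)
Your main-case machinery is essentially the paper's: Lemma~\ref{cubes} supplies the exceptional sets $E,G$, the resulting ratio bound $\max(|x|,|y|)/\min(|x|,|y|)\le \sqrt n/\min(\la,\tau)$ is fed into \eqref{condd}, and the strict inequality $\a<p_-\min(1,p_--1)/n$ leaves exactly the room needed to pick $r>1$ and $\ga$ so that the pointwise bound becomes $\min(|x|,|y|)^{-\nu}$ with $\nu>n$ uniformly in $\la,\tau$. (The paper keeps the factors $\tau^{p_-}$ and $\la^{p_-(p_--1)}$ separately on $\{|x|>|y|\}$ and $\{|x|\le|y|\}$ rather than collapsing to $s=\min(\la,\tau)$; this is cosmetic.) But the step you flag as hardest is in fact immediate and uniform: you should not replace $\min(|x|,|y|)$ by its infimum over the region. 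For a cube $Q$ disjoint from $[-N,N]^n$ one simply uses $\min(|x|,|y|)^{-\nu}\le |x|^{-\nu}+|y|^{-\nu}$ to get
$$\int_{Q\setminus G}\int_{Q\setminus E}\min(|x|,|y|)^{-\nu}\,dx\,dy\le 2|Q|\int_Q|z|^{-\nu}\,dz,$$
and summing over disjoint such cubes gives $2\int_{\{|z|_\infty\ge N\}}|z|^{-\nu}\,dz<\infty$ with no $s$-dependent factor whatsoever; $\psi(z)=|z|^{-\nu}\chi_{\{|z|\ge N\}}$ is your integrable majorant.

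The genuine gaps are in the case analysis you defer. First, the threshold $R_Q\ge N\sqrt n/s$ is both $s$-dependent and the wrong criterion: what the argument needs is that the cube (hence the reduced region) lies in $\{|z|\ge N\}$, which holds for every $Q$ with $Q\cap[-N,N]^n=\emptyset$ regardless of $R_Q$. If instead you send all cubes with $R_Q<N\sqrt n/s$ to a fallback, that fallback must itself be uniform in $s$, and the trivial bound $F\le 1$ over such cubes can cost up to $|\{|z|\lesssim N\sqrt n/s\}|\sim s^{-n}$, which blows up as $s\to 0$. Second, and more seriously, the only genuinely delicate cubes --- those meeting $[-N,N]^n$ but of side length much larger than $N$, in particular the one containing the origin --- are precisely the ones you leave to unspecified ``separate arguments''. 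There Lemma~\ref{cubes} is unavailable, and the paper needs a different device: reduce to $Q=(-a,a)^n$, peel off $P_N=[-N,N]^n$ at cost $O(N^n|Q|)$, take $E$ and $G$ to be the \emph{concentric} cubes $(-\la^{1/n}a,\la^{1/n}a)^n$ and $(-\tau^{1/n}a,\tau^{1/n}a)^n$, and use the radial symmetry of $p(\cdot)$ to fold the $2^n$ reflected copies of the shell onto one quadrant, where the ratio bound $|x|/|y|\le\sqrt n/\tau^{1/n}$ again applies. This is the only place the hypothesis that $p(\cdot)$ is radial is used, and your outline never invokes it. Your dyadic count of cubes with $\rho_Q<R_Q/2$ does not substitute for this: there are $O_n(1)$ such cubes per dyadic scale but infinitely many scales, so a per-cube $O(|Q|)$ bound diverges and one still needs the concentric-removal decay cube by cube.
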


\begin{proof}
Take $r>1$ and $\e>0$ such that
$$\nu:=\frac{1}{\a(1+\e)r}p_-\min(1,p_--1)>n.$$

Let ${\mathcal F}$ be a finite family of pairwise disjoint cubes. Denote
$${\mathcal F}_1:=\{Q\in {\mathcal F}:Q\subset {\mathbb R}^n\setminus [-N,N]^n\},$$
$${\mathcal F}_2:=\{Q\in {\mathcal F}:Q\subset [-N,N]^n\}\quad\text{and}\quad {\mathcal F}_3:={\mathcal F}\setminus {\mathcal F}_1\cup{\mathcal F}_2.$$

Suppose that $Q\in {\mathcal F}_1$. By Lemma \ref{cubes}, there exist subsets $E,G\subset Q$ with $|E|=\la|Q|$ and $|G|=\tau|Q|$ such that
\begin{equation}\label{xy}
\frac{|x|}{|y|}\le \frac{\sqrt{n}}{\tau}\quad(x\in Q, y\in Q\setminus G)
\end{equation}
and
\begin{equation}\label{yx}
\frac{|y|}{|x|}\le \frac{\sqrt{n}}{\la}\quad(y\in Q, x\in Q\setminus E).
\end{equation}

Take $\ga_1$ such that $\log{\sqrt{n}}=\e\log\frac{1}{\ga_1}$, and assume that $0<\la,\tau<\ga_1$.
Then, setting
$$\D_1:=\{(x,y)\in Q\times Q:|x|>|y|\},\quad \D_2:=\{(x,y)\in Q\times Q:|x|\le|y|\},$$
and using (\ref{condd}) along with (\ref{xy}) and (\ref{yx}), we obtain
$$
|p(x)-p(y)|\le
\begin{cases}
(1+\e)\a\frac{1}{\log |y|}\log(1/\tau),&(x,y)\in \big(Q\times (Q\setminus G)\big)\cap \D_1\\
(1+\e)\a\frac{1}{\log |x|}\log(1/\la),&(x,y)\in \big((Q\setminus E)\times Q\big)\cap \D_2.
\end{cases}
$$
From this,
\begin{eqnarray*}
&&\int_{Q\setminus G}\int_{Q\setminus E}F_{p,\la,\tau}(x,y)^{1/r}dxdy\\
&&\le \int_{Q\setminus G}\int_{Q\setminus E}(\tau^{p_-}\la^{p_-(p_--1)})^{\frac{1}{r|p(x)-p(y)|}}dxdy\\
&&\le \int_Q\int_{Q\setminus G}\tau^{\frac{p_-}{(1+\e)\a r}\frac{\log |y|}{\log(1/\tau)}}\chi_{\D_1}dydx+\int_{Q}\int_{Q\setminus E}\la^{\frac{p_-(p_--1)}{(1+\e)\a r}\frac{\log |x|}{\log(1/\la)}}\chi_{\D_2}dxdy\\
&&\le \int_Q\int_{Q\setminus G}\frac{1}{|y|^{\nu}}dydx+\int_{Q}\int_{Q\setminus E}\frac{1}{|x|^{\nu}}dxdy\le 2|Q|\int_{Q}\frac{1}{{|x|^{\nu}}}dx.
\end{eqnarray*}
Therefore,
\begin{eqnarray}
&&\sum_{Q\in {\mathcal F}_1}\frac{1}{|Q|}
\inf_{E\subset Q:|E|=\la|Q|\atop G\subset Q:|G|=\tau|Q|}\int_{Q\setminus G}\int_{Q\setminus E}F_{p,\la,\tau}(x,y)^{1/r}dxdy\label{partf1}\\
&&\le 2\sum_{Q\in {\mathcal F}_1}\int_Q\frac{1}{{|x|^{\nu}}}dx\le 2\int_{{\mathbb R}^n\setminus [-N,N]^n}\frac{1}{{|x|^{\nu}}}dx.\nonumber
\end{eqnarray}

Suppose now that $Q\in {\mathcal F}_2$. Then we trivially have
$$
\int_{Q}\int_{Q}F_{p,\la,\tau}(x,y)^{1/r}dxdy\le |Q|^2,
$$
and hence,
\begin{eqnarray}
&&\sum_{Q\in {\mathcal F}_2}\frac{1}{|Q|}
\inf_{E\subset Q:|E|=\la|Q|\atop G\subset Q:|G|=\tau|Q|}\int_{Q\setminus G}\int_{Q\setminus E}F_{p,\la,\tau}(x,y)^{1/r}dxdy\label{partf2}\\
&&\le \sum_{Q\in {\mathcal F}_2}|Q|\le (2N)^n.\nonumber
\end{eqnarray}

Suppose that $Q\in {\mathcal F}_3$. Consider the case where $Q:=(-a,a)^n$ for $a>N$. Then $Q$ is the only cube in ${\mathcal F}_3$.
Write $Q=P_N\cup R$, where $P_N:=[-N,N]^n$ and $R:=Q\setminus P_N$.
Then, for $E,G\subset Q$ to be chosen later,
\begin{eqnarray}
&&\int_{Q\setminus G}\int_{Q\setminus E}F_{p,\la,\tau}(x,y)^{1/r}dxdy\le\int_{R\setminus G}\int_{R\setminus E}F_{p,\la,\tau}(x,y)^{1/r}dxdy\label{np}\\
&&+\int_{P_N}\int_RF_{p,\la,\tau}(x,y)^{1/r}dxdy+\int_{R}\int_{P_N}F_{p,\la,\tau}(x,y)^{1/r}dxdy\nonumber\\
&&+\int_{P_N}\int_{P_N}F_{p,\la,\tau}(x,y)^{1/r}dxdy.\nonumber
\end{eqnarray}
Next,
$$
\int_{P_N}\int_RF_{p,\la,\tau}(x,y)^{1/r}dxdy+\int_{R}\int_{P_N}F_{p,\la,\tau}(x,y)^{1/r}dxdy\le 2(2N)^n|Q|
$$
and
$$
\int_{P_N}\int_{P_N}F_{p,\la,\tau}(x,y)^{1/r}dxdy\le |P_N|^2\le (2N)^n|Q|.
$$
Hence,
\begin{eqnarray*}
&&\int_{Q\setminus G}\int_{Q\setminus E}F_{p,\la,\tau}(x,y)^{1/r}dxdy\\
&&\le\int_{R\setminus G}\int_{R\setminus E}F_{p,\la,\tau}(x,y)^{1/r}dxdy+3(2N)^n|Q|.
\end{eqnarray*}

Take now $G:=(-\tau^{1/n}a,\tau^{1/n}a)^n$ and $E:=(-\la^{1/n}a,\la^{1/n}a)^n$. Then $|G|=\tau|Q|$ and $|E|=\la|Q|$.
For $0<\d<1$ denote
$$S_{a,\d}:=(0,a)^n\setminus \big(0,\max(N,\d^{1/n}a)\big)^n.$$
Then, using that $p(\cdot)$ is radial, we obtain
$$\int_{R\setminus G}\int_{R\setminus E}F_{p,\la,\tau}(x,y)^{1/r}dxdy=4^n\int_{S_{a,\tau}}\int_{S_{a,\la}}F_{p,\la,\tau}(x,y)^{1/r}dxdy.$$

For all $x\in (0,a)^n$ and $y\in S_{a,\tau}$,
$$\frac{|x|}{|y|}\le \frac{\sqrt{n}a}{\tau^{1/n}a}=\frac{\sqrt{n}}{\tau^{1/n}}.$$
Take $\ga_2$ such that for all $0<\tau<\ga_2$.
$$\log\frac{\sqrt{n}}{\tau^{1/n}}\le (1+\e)\log(1/\tau).$$
Then, by (\ref{condd}), for all $x\in (0,a)^n$ and $y\in S_{a,\tau}$ such that $|x|\ge |y|$ we have
$$
|p(x)-p(y)|\le (1+\e)\a\frac{1}{\log|y|}\log(1/\tau).
$$
Similarly, for $0<\la<\ga_2$ and for all $y\in (0,a)^n$ and $x\in S_{a,\la}$ such that $|x|\le |y|$ we have
$$
|p(x)-p(y)|\le (1+\e)\a\frac{1}{\log|x|}\log(1/\la).
$$
Therefore, exactly as in the case for $Q\in {\mathcal F}_1$, we obtain
\begin{eqnarray*}
\int_{S_{a,\tau}}\int_{S_{a,\la}}F_{p,\la,\tau}(x,y)^{1/r}dxdy&\le& 2a^n\int_{(0,a)^n\setminus (0,N)^n}\frac{1}{|x|^{\nu}}dx\\
&\le& |Q|\int_{(0,\infty)^n\setminus (0,N)^n}\frac{1}{|x|^{\nu}}dx.
\end{eqnarray*}

Combining this with the previous estimates yields
$$\int_{Q\setminus G}\int_{Q\setminus E}F_{p,\la,\tau}(x,y)^{1/r}dxdy\le 3(2N)^n|Q|+4^n|Q|\int_{(0,\infty)^n\setminus (0,N)^n}\frac{1}{|x|^{\nu}}dx,$$
and hence,
$$\frac{1}{|Q|}\int_{Q\setminus G}\int_{Q\setminus E}F_{p,\la,\tau}(x,y)^{1/r}dxdy\le C_{n,N}.$$

In the general case of $Q\in {\mathcal F}_3$, split ${\mathcal F}_3:={\mathcal G}_1\cup {\mathcal G}_2$,
where
$${\mathcal G}_1:=\{Q\in {\mathcal F}_3:\ell_Q\le N\}$$
and
$${\mathcal G}_2:=\{Q\in {\mathcal F}_3:\ell_Q>N\},$$
where $\ell_Q$ denotes the side length of $Q$. For $Q\in {\mathcal G}_1$ we have that $Q\subset [-2N,2N]^n$, and hence the same argument as for ${\mathcal F}_2$ applies.
On the other hand, there are at most $k_n$ cubes in ${\mathcal G}_2$ and for every $Q\in {\mathcal G}_2$ there exists $a>N$ such that $Q\subset (-a,a)^n$ and $a\le c_n\ell_Q$.
Hence, for each cube $Q\in {\mathcal G}_2$ we can use the same argument as just considered in the case where $Q=(-a,a)^n$ for $a>N$. Therefore,
$$\sum_{Q\in {\mathcal F}_3}\frac{1}{|Q|}
\inf_{E\subset Q:|E|=\la|Q|\atop G\subset Q:|G|=\tau|Q|}\int_{Q\setminus G}\int_{Q\setminus E}F_{p,\la,\tau}(x,y)^{1/r}dxdy\le C_{n,N}.$$

From this, assuming that $0<\la,\tau<\min(\ga_1,\ga_2)$, and using (\ref{partf1}) and (\ref{partf2}), we obtain the desired uniform estimate for $\sum_{Q\in {\mathcal F}}$,
which completes the proof.
\end{proof}

\begin{cor}\label{usesc} Let $s(\cdot)$ be a function on $[0,\infty)$ with $1<s_-\le s_+<\infty$, and let $p(x):=s(|x|)$.
Suppose that $p(\cdot)\in A_{p(\cdot)}$, and there exist $N>1$ and $\a$ satisfying
\begin{equation}\label{alsn}
0<\a<\frac{s_-\min(1,s_--1)}{n}
\end{equation}
such that for all $t\ge N$,
\begin{equation}\label{der}
\Big|\frac{d}{dt}s(t)\Big|\le \frac{\a}{t\log t}.
\end{equation}
Then $p(\cdot)\in {\mathcal P}$.
\end{cor}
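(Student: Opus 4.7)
The plan is to deduce Corollary \ref{usesc} directly from Theorem \ref{ussc} combined with Theorem \ref{mr}. Since $p(x)=s(|x|)$ is, by construction, radial, and since $p_-=s_-$ and $p_+=s_+$, the constant $\alpha$ in hypothesis (\ref{alsn}) matches exactly the admissible range required for $\alpha$ in Theorem \ref{ussc}. Consequently, once we verify the pointwise oscillation condition (\ref{condd}) for $p(\cdot)$, we obtain $p(\cdot)\in {\mathcal U}_\infty$, and then the combination $p(\cdot)\in A_{p(\cdot)}\cap {\mathcal U}_\infty$ together with Theorem \ref{mr} finishes the argument.

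The only real work is verifying (\ref{condd}) from (\ref{der}). Fix $x,y\in {\mathbb R}^n$ with $|x|\ge |y|\ge N$. Since the bound (\ref{der}) implies that $s$ is absolutely continuous on $[N,\infty)$, the fundamental theorem of calculus together with (\ref{der}) gives
$$
|p(x)-p(y)|=|s(|x|)-s(|y|)|\le \int_{|y|}^{|x|}\frac{\alpha}{t\log t}\,dt=\alpha\log\frac{\log|x|}{\log|y|}.
$$

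Now I would use the elementary estimate $\log(1+u)\le u$ for $u\ge 0$ applied to $u=\log(|x|/|y|)/\log|y|\ge 0$ (nonnegative because $|y|\ge N>1$ and $|x|\ge |y|$), yielding
$$
\alpha\log\frac{\log|x|}{\log|y|}=\alpha\log\Bigl(1+\frac{\log(|x|/|y|)}{\log|y|}\Bigr)\le \frac{\alpha}{\log|y|}\log\frac{|x|}{|y|}.
$$
This is precisely (\ref{condd}), so Theorem \ref{ussc} applies and gives $p(\cdot)\in {\mathcal U}_\infty$. Combined with the hypothesis $p(\cdot)\in A_{p(\cdot)}$, Theorem \ref{mr} yields $p(\cdot)\in {\mathcal P}$.

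There is essentially no obstacle here; the corollary is a clean specialization of Theorem \ref{ussc} to radial exponents presented via their radial profile $s(\cdot)$, with the telescoping-via-derivative estimate being the one nontrivial line. The mild technical point worth noting is that (\ref{der}) is assumed only for $t\ge N$, which is exactly the range over which (\ref{condd}) is required, so no extra care is needed near the origin.
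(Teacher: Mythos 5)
Your proposal is correct and follows essentially the same route as the paper: integrate the derivative bound (\ref{der}) to verify the oscillation condition (\ref{condd}) (the paper bounds $\int_{|y|}^{|x|}\frac{dt}{t\log t}$ by pulling out $\frac{1}{\log|y|}$ directly, while you evaluate the integral and then use $\log(1+u)\le u$ — the same estimate in a different order), and then apply Theorem \ref{ussc} together with Theorem \ref{mr}.
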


\begin{proof}
It follows from (\ref{der}) that for $|x|>|y|\ge N$,
$$
|p(x)-p(y)|\le \a\int_{|y|}^{|x|}\frac{1}{t\log t}dt\le \a\frac{1}{\log |y|}\log\frac{|x|}{|y|}.
$$
Hence, by Theorem \ref{ussc}, $p(\cdot)\in {\mathcal U}_{\infty}$, which, by Theorem \ref{mr}, implies $p(\cdot)\in {\mathcal P}$.
\end{proof}

\begin{remark}\label{ontan}
By Proposition \ref{stlh}, the $A_{p(\cdot)}$ condition in Corollary \ref{usesc} can be replaced by the $LH_0$ condition.
\end{remark}

\begin{example}\label{exx1}
Let $t_0\ge {\rm{e}}$ and let $\f$ be an arbitrary non-decreasing, positive and unbounded function on $[t_0,\infty)$. Define
$$
s(t):=
\begin{cases}
c+\int_{t_0}^t\frac{\sin y}{y\log y\f(y)}dy,&t>t_0\\
c,&t\in [0,t_0],
\end{cases}
$$
where $c>0$ is chosen in such a way to have $s_->1$. Then $p(x):=s(|x|)\in {\mathcal P}$.

Indeed, in this case, for every $\a>0$ there exists $N>1$ such that (\ref{der}) is satisfied for all $t\ge N$. Also, we obviously have that $p(\cdot)\in LH_0$.
Therefore, by Corollary \ref{usesc} along with Remark \ref{ontan}, $p(\cdot)\in {\mathcal P}$.

One can also avoid the function $\f$, setting
$$
s(t):=
\begin{cases}
c+\a\int_{t_0}^t\frac{\sin y}{y\log y}dy,&t>t_0\\
c,&t\in [0,t_0],
\end{cases}
$$
where $c$ and $\a$ are chosen in such a way that condition (\ref{alsn}) holds. Then $p(x):=s(|x|)\in {\mathcal P}$.
\end{example}

\begin{example}\label{exx2} For $c,\a>0$ define
$$s(t):=
\begin{cases}
c+\a\sin(\log\log t),&t>{\rm{e}}\\
c,&t\in [0,{\rm{e}}].
\end{cases}
$$

Then $s_-=c-\a$. Therefore, if
$$
0<\a<\frac{(c-\a)\min(1,c-\a-1)}{n},
$$
then conditions (\ref{alsn}) and (\ref{der}) hold, and hence,
$$p(x):=c+\a\sin(\log\log |x|)\chi_{\{|x|>{\rm{e}}\}}(x)\in {\mathcal P}.$$
For instance, one can take here $c=\a+2$ and $0<\a<2/n$.

Observe that this example but with less precise values of $c$ and $\a$ was obtained in \cite{L1} using a
completely different approach based on the pointwise multipliers for $BMO$.
\end{example}

There is a more general version of Theorem \ref{ussc}, allowing to change $p(\cdot)$ on a set of finite measure.

\begin{theorem}\label{ussc1}
Let $N>1$ and let $A\subset {\mathbb R}^n\setminus [-N,N]^n$ be a symmetric (in the sense that the function $\chi_A$ is radial) open set of finite measure.
Assume that $1<p_-\le p_+<\infty$ and that $p(\cdot)$ is a radial exponent on~${\mathbb R}^n$.
Suppose that there exists an $\a$ satisfying
$$0<\a<\frac{p_-\min(1,p_--1)}{n}$$
such that for all $x,y\in {\mathbb R}^n\setminus ([-N,N]^n\cup A)$ with $|x|\ge |y|$,
\begin{equation}\label{condd1}
|p(x)-p(y)|\le \a\frac{1}{\log |y|}\log\frac{|x|}{|y|}.
\end{equation}
Then $p(\cdot)\in {\mathcal U}_{\infty}$.
\end{theorem}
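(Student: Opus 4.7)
The plan is to adapt the proof of Theorem \ref{ussc}, using the enlarged exceptional set $B := [-N,N]^n \cup A$ in place of $[-N,N]^n$, and exploiting the fact that $|B| \le (2N)^n + |A| < \infty$ to control the additional contributions coming from $A$. The crucial auxiliary observation is that $F_{p,\lambda,\tau}(x,y) \le 1$ pointwise: indeed, when $p(x) > p(y)$ the exponents $\tfrac{p(y)}{p(x)-p(y)}$ and $\tfrac{p'(x)}{p'(y)-p'(x)}$ are both positive (since $p_->1$), and $0<\lambda,\tau<1$.

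I begin by choosing $r > 1$, $\e > 0$ and $\nu > n$ exactly as in the proof of Theorem \ref{ussc}, so that $\nu = \tfrac{p_-\min(1,p_--1)}{\alpha(1+\e)r} > n$. Given a finite family $\mathcal F$ of pairwise disjoint cubes, I split it into three parts: $\mathcal F_1$ consisting of cubes with $Q \cap B = \emptyset$; $\mathcal F_2$ consisting of cubes with $Q \cap B \neq \emptyset$ and $0 \notin Q$; and $\mathcal F_3$ consisting of the at most one cube containing the origin (if any). For $\mathcal F_1$ the proof of Theorem \ref{ussc} applies verbatim, since \eqref{condd1} holds for all $(x,y) \in Q \times Q$; this yields
\[
\sum_{Q \in \mathcal F_1} \frac{1}{|Q|} \inf_{E,G}\int_{Q\setminus G}\int_{Q\setminus E} F_{p,\lambda,\tau}^{1/r}\, dxdy \le 2\int_{\mathbb R^n \setminus B} |x|^{-\nu}\, dx < \infty.
\]

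For $Q \in \mathcal F_2$, I choose $E, G \subset Q$ via Lemma \ref{cubes} with $|E|=\lambda|Q|$ and $|G|=\tau|Q|$, so that the ratio bounds \eqref{xy} and \eqref{yx} hold. I then decompose
\[
\int_{Q\setminus G}\int_{Q\setminus E} F_{p,\lambda,\tau}^{1/r}\, dxdy \le \int_{(Q\setminus G)\setminus B}\int_{(Q\setminus E)\setminus B} F_{p,\lambda,\tau}^{1/r}\, dxdy + J_Q,
\]
where $J_Q$ collects integrals in which at least one of $x,y$ lies in $B$. Using $F_{p,\lambda,\tau} \le 1$ bounds $J_Q \le 2|Q\cap B|\cdot|Q|$, so that $\sum_{Q \in \mathcal F_2} J_Q/|Q| \le 2|B| < \infty$ by disjointness. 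For the first integral, both variables lie in $Q\setminus B \subset \mathbb R^n\setminus([-N,N]^n \cup A)$, so condition \eqref{condd1} applies; repeating the computation from the proof of Theorem \ref{ussc} (with $B$ in place of $[-N,N]^n$ in the final integration domain) gives the upper bound $2|Q|\int_{Q\setminus B} |x|^{-\nu}\, dx$, and summing yields at most $2\int_{\mathbb R^n\setminus B} |x|^{-\nu}\, dx$.

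For the unique cube in $\mathcal F_3$, I follow the detailed argument from the $\mathcal F_3$ case of Theorem \ref{ussc} for the cube $Q=(-a,a)^n$, with $E=(-\lambda^{1/n}a,\lambda^{1/n}a)^n$ and $G=(-\tau^{1/n}a,\tau^{1/n}a)^n$; I again split $Q=(Q\cap B)\cup(Q\setminus B)$ and use the trivial bound $F\le 1$ on cross terms to get a contribution of order $|B|$ after normalization, while the pure integral over $S_{a,\tau}\setminus B$ and $S_{a,\lambda}\setminus B$ is controlled via \eqref{condd1} by $C|Q|\int_{\{|x|>N\}\setminus B}|x|^{-\nu}\,dx$. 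The reduction of the general case to this one proceeds exactly as in Theorem \ref{ussc}. The main obstacle is that, unlike $[-N,N]^n$, the set $A$ need not be bounded, so many cubes of large diameter can intersect $A$ at arbitrary radii; however, since $|B|<\infty$ and the trivial bound $F\le 1$ suffices on the cross-term regions, all the $A$-generated contributions collapse to a single multiple of $|B|$. Summing the three parts gives the required uniform bound, verifying the $\mathcal U_\infty$ condition.
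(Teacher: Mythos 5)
Your proof is correct and follows essentially the same strategy as the paper's: run the Theorem \ref{ussc} argument on $Q\setminus B$ where \eqref{condd1} is available, and absorb every contribution in which a variable falls into $B=[-N,N]^n\cup A$ via the pointwise bound $F_{p,\la,\tau}\le 1$ together with $|B|<\infty$. The only difference is bookkeeping: the paper partitions $\mathcal F$ by containment in $B$ or its complement and treats straddling cubes separately, whereas you partition by intersection with $B$ and by whether the cube contains the origin (so that Lemma \ref{cubes} applies to all of $\mathcal F_2$); this is a cosmetic reorganization of the same argument.
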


\begin{remark}\label{expls}
The difference with Theorem \ref{ussc} is that condition (\ref{condd1}) holds now on a smaller set, that is, we allow to define $p(\cdot)$ arbitrary on a set of finite measure, although we still require that $p(\cdot)$ is radial. In the case where the set $A$ is bounded, Theorem \ref{ussc1} follows trivially from Theorem \ref{ussc}. Therefore, the interesting case is where $A$ is unbounded.
\end{remark}

\begin{proof}[Proof of Theorem \ref{ussc1}]
The proof is very similar to the previous one, and therefore we only indicate the necessary changes.

Let ${\mathcal F}$ be a finite family of pairwise disjoint cubes. Denote
$${\mathcal F}_1:=\{Q\in {\mathcal F}:Q\subset {\mathbb R}^n\setminus ([-N,N]^n\cup A)\},$$
$${\mathcal F}_2:=\{Q\in {\mathcal F}:Q\subset [-N,N]^n\cup A\}\quad\text{and}\quad {\mathcal F}_3={\mathcal F}\setminus {\mathcal F}_1\cup{\mathcal F}_2.$$

For ${\mathcal F}_1$ and ${\mathcal F}_2$ the argument is exactly the same as in the previous proof. In particular, it is important for ${\mathcal F}_2$ that the set $A$ is of finite measure.

Consider ${\mathcal F}_3$. Assume that $Q\in {\mathcal F}_3$ is such that $Q\subset {\mathbb R}^n\setminus [-N,N]^n$.
In this case we remove from $Q$ the same sets $E$ and $G$ as in the previous proof for the cubes from ${\mathcal F}_1$. Next, write $Q=P\cup R$, where
$P:=Q\cap A$ and $R:=Q\setminus A$. Then, using the same estimate as in (\ref{np}), we obtain
$$\int_{Q\setminus G}\int_{Q\setminus E}F_{p,\la,\tau}(x,y)^{1/r}dxdy\le \int_{R\setminus G}\int_{R\setminus E}F_{p,\la,\tau}(x,y)^{1/r}dxdy+3|A||Q|.$$
The integral on the right-hand side is bounded exactly in the same way as in the previous proof for $Q\in {\mathcal F}_1$ instead of $R$. We obtain
$$\int_{R\setminus G}\int_{R\setminus E}F_{p,\la,\tau}(x,y)^{1/r}dxdy\le 2|Q|\int_{Q}\frac{1}{{|x|^{\nu}}}dx.$$
Combining this with the previous estimate yields
\begin{eqnarray*}
&&\sum_{Q\in {\mathcal F}_3:Q\subset {\mathbb R}^n\setminus [-N,N]^n}\frac{1}{|Q|}
\inf_{E\subset Q:|E|=\la|Q|\atop G\subset Q:|G|=\tau|Q|}\int_{Q\setminus G}\int_{Q\setminus E}F_{p,\la,\tau}(x,y)^{1/r}dxdy\\
&&\le 2\int_{{\mathbb R}^n\setminus [-N,N]^n}\frac{1}{{|x|^{\nu}}}dx+3|A|.
\end{eqnarray*}

Suppose now that $Q\in {\mathcal F}_3$ is such that $Q\cap [-N,N]^n\not=\emptyset$. Then the same reduction as in the previous proof shows that it is enough to consider the case where $Q:=(-a,a)^n$ for $a>N$.
Applying the same estimate as in (\ref{np}) with $P:=Q\cap (A\cup [-N,N]^n)$ and $R:=Q\setminus P$, we obtain
\begin{eqnarray*}
&&\int_{Q\setminus G}\int_{Q\setminus E}F_{p,\la,\tau}(x,y)^{1/r}dxdy\\
&&\le\int_{R\setminus G}\int_{R\setminus E}F_{p,\la,\tau}(x,y)^{1/r}dxdy+3\big((2N)^n+|A|\big)|Q|.
\end{eqnarray*}
Next, take the same $G$ and $E$ as in the previous proof. Then, setting
$$S_{a,\d}:=(0,a)^n\setminus \Big(\big(0,\max(N,\d^{1/n}a)\big)^n\cup A\Big)$$
yields
$$\int_{R\setminus G}\int_{R\setminus E}F_{p,\la,\tau}(x,y)^{1/r}dxdy=4^n\int_{S_{a,\tau}}\int_{S_{a,\la}}F_{p,\la,\tau}(x,y)^{1/r}dxdy.$$
The rest is exactly the same as in the previous proof. We obtain
\begin{eqnarray*}
&&\int_{Q\setminus G}\int_{Q\setminus E}F_{p,\la,\tau}(x,y)^{1/r}dxdy\\
&&\le 3\big((2N)^n+|A|\big)|Q|+4^n|Q|\int_{(0,\infty)^n\setminus (0,N)^n}\frac{1}{|x|^{\nu}}dx,
\end{eqnarray*}
which, along with the previous cases, completes the proof.
\end{proof}

\subsection{A comparison with Nekvinda's condition}\label{ss63}
Let us introduce the following notation. Denote
$${\rm e}_0:=1\quad\text{and}\quad {\rm e}_{k+1}:={\rm e}^{{\rm e}_k}\quad (k\in {\mathbb N}),$$
and define functions $\log_kx$ on $({\rm e}_k,\infty)$ by
$$\log_0x:=x\quad\text{and}\quad \log_{k+1}x:=\log(\log_kx).$$
Denote also, for $\a>0$,
$$b_{k,\a}(x):=-\frac{1}{\a}\frac{d}{dx}(\log_k^{-\a}x).$$

In \cite{Ne4}, Nekvinda obtained the following result.

\begin{theorem}\label{nekth} Assume that $1<p_-\le p_+<\infty$, and $p(\cdot)\in LH_0$. Next, let $s(\cdot)$ be a function of $[0,\infty)$ with $1<s_-\le s_+<\infty$ and let $q(x):=s(|x|)$.
Assume that there exist $C>0$, $k\in {\mathbb N}$ and $0<\a<1$ such that
\begin{equation}\label{ncond1}
\Big|\frac{d}{dt}s(t)\Big|\le Cb_{k,\a}(t)\quad(t\ge {\rm e}_k).
\end{equation}
Further, assume that, for some $0<c<1$,
\begin{equation}\label{ncond2}
\int_{{\mathbb R}^n}c^{\frac{1}{|p(x)-q(x)|}}dx<\infty.
\end{equation}
Then $p(\cdot)\in {\mathcal P}$.
\end{theorem}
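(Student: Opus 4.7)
The plan is to invoke Proposition \ref{stlh}: since $p(\cdot)\in LH_0$ is assumed, it suffices to show $p(\cdot)\in {\mathcal U}_\infty$. My key tool is a three-term decomposition of $F_{p,\la,\tau}^{1/r}$. Writing $t_1=|p(x)-q(x)|$, $t_2=|q(x)-q(y)|$, $t_3=|q(y)-p(y)|$, the triangle inequality gives $|p(x)-p(y)|\le t_1+t_2+t_3\le 3\max_i t_i$. Starting from the bound
$$F_{p,\la,\tau}^{1/r}(x,y)\le \tau^{p_-/(r(p(x)-p(y)))}\la^{p_-(p_--1)/(r(p(x)-p(y)))}$$
(valid on $\{p(x)>p(y)\}$; $F_{p,\la,\tau}^{1/r}$ vanishes otherwise) together with $\tau,\la<1$, and using the elementary fact that $\delta^{1/(3\max_i t_i)}$ equals $\delta^{1/(3t_{i_0})}$ for the index $i_0$ realizing the maximum and is therefore bounded by $\sum_{i=1}^{3}\delta^{1/(3t_i)}$, I obtain the key pointwise estimate
$$F_{p,\la,\tau}^{1/r}(x,y)\le \sum_{i=1}^{3}\tau^{p_-/(3rt_i)}\la^{p_-(p_--1)/(3rt_i)}.$$

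The outer summands ($i=1,3$) depend only on $|p(x)-q(x)|$ or $|p(y)-q(y)|$ and are controlled by (\ref{ncond2}). Since $\tau,\la<\ga$, each is dominated by $\ga^{p_-^2/(3rt_i)}$; choosing $\ga$ so small that $\ga^{p_-^2/(3r)}\le c$ (with $c$ as in (\ref{ncond2})) converts this into $c^{1/t_i}$. Integration over $(Q\setminus G)\times (Q\setminus E)$ yields at most $|Q|\int_Q c^{1/|p(x)-q(x)|}\,dx$ (with the symmetric bound for $i=3$); summing over the disjoint family $\mathcal{F}$ and dividing by $|Q|$ produces a total bounded by $2\int_{\mathbb R^n}c^{1/|p(x)-q(x)|}\,dx<\infty$.

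The middle summand ($i=2$) depends only on the radial exponent $q$, and I would handle it by replicating the proof of Theorem \ref{ussc}. Condition (\ref{ncond1}) forces $Cb_{k,\a}(t)=o(1/(t\log t))$ as $t\to\infty$, so for any prescribed small $\tilde\a>0$ there exists $N$ with $|s'(t)|\le\tilde\a/(t\log t)$ for $t\ge N$; integrating and using $\log(1+z)\le z$ for $z\ge 0$ yields condition (\ref{condd}) for $q$ with constant $\tilde\a$ whenever $|x|>|y|\ge N$. With this in hand I choose $E,G$ via Lemma \ref{cubes} and split $Q\times Q=\Delta_1\cup\Delta_2$ with $\Delta_1=\{|x|>|y|\}$. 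On $\Delta_1\cap (Q\times(Q\setminus G))$ one has $|x|/|y|\le\sqrt{n}/\tau$ and hence
$$\tau^{p_-/(3r|q(x)-q(y)|)}\le |y|^{-p_-/(3r\tilde\a(1+\e))},$$
with the symmetric bound in $\la$ on $\Delta_2\cap((Q\setminus E)\times Q)$. Treating the three cube families $\mathcal{F}_1,\mathcal{F}_2,\mathcal{F}_3$ exactly as in the proof of Theorem \ref{ussc} closes the middle-term estimate.

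The main technical obstacle is a coherent parameter choice: I must fix $r>1$ (close to $1$), then take $\tilde\a$ small enough to satisfy $p_-\min(1,p_--1)/(3r\tilde\a(1+\e))>n$, and finally $\ga$ small enough that $\ga^{p_-^2/(3r)}\le c$. All three conditions can be met simultaneously because (\ref{ncond1}) permits $\tilde\a$ to be chosen arbitrarily small at the cost of enlarging $N$. Combining the three summand estimates verifies the ${\mathcal U}_\infty$ condition for $p(\cdot)$, after which Proposition \ref{stlh} delivers $p(\cdot)\in {\mathcal P}$.
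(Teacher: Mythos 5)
Your proof is correct, and it is worth pointing out that the paper itself contains no proof of Theorem \ref{nekth}: the statement is quoted from Nekvinda \cite{Ne4} for purposes of comparison, and Section \ref{ss63} only observes that in the special case $p(\cdot)=q(\cdot)$ hypothesis (\ref{ncond1}) is stronger than (\ref{der}), so that Corollary \ref{usesc} together with Remark \ref{ontan} recovers that case. What you add is a treatment of the perturbation (\ref{ncond2}) via the splitting $|p(x)-p(y)|\le 3\max\big(|p(x)-q(x)|,|q(x)-q(y)|,|q(y)-p(y)|\big)$, which turns the bound $F_{p,\la,\tau}^{1/r}\le(\tau^{p_-}\la^{p_-(p_--1)})^{1/(r|p(x)-p(y)|)}$ into a sum of three terms: the two outer ones are controlled exactly as in Proposition \ref{nekimp} (your max-trick even bypasses the convexity argument of Proposition \ref{convl}), while the middle one, depending only on the radial exponent $q$, is handled by rerunning the proof of Theorem \ref{ussc} with a constant $\tilde\a$ that (\ref{ncond1}) allows to be taken arbitrarily small, since $t\log t\,b_{k,\a}(t)=1/\f_{k,\a}(t)\to 0$. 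The parameter bookkeeping you describe (first $r$ and $\e$, then $\tilde\a$ and $N$, finally $\ga\le\min(\ga_1,\ga_2,c^{3r/p_-^2})<1/2$) is coherent, and a single choice of $E,G$ from Lemma \ref{cubes} serves all three terms because the outer two are simply integrated over all of $Q\times Q$. The net effect is a self-contained derivation of Nekvinda's theorem from the machinery of this paper, which the paper does not carry out and which goes beyond its explicit $p=q$ discussion; it also confirms concretely the claim that the ${\mathcal U}_{\infty}$ framework subsumes the conditions of \cite{Ne3,Ne4}.
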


This result is an extension of \cite{Ne3}, where the same was established assuming that $s(\cdot)$ is monotone.

The key example of interest here is $p(x)=q(x)$. Then (\ref{ncond2}) holds trivially, and Theorem \ref{nekth} says that a radial exponent $p(x)=s(|x|)$ belongs to ${\mathcal P}$ if $p(\cdot)\in LH_0$
and the behaviour of $p(\cdot)$ at infinity is controlled by a rather weak assumption expressed in (\ref{ncond1}). In general, (\ref{ncond2}) says that the behaviour of $p(\cdot)$ at infinity may be slightly perturbed, for example, it is enough to assume that
$$|p(x)-q(x)|\le \frac{C}{\log|x|}$$
for $|x|\ge N$ for some large $N$, where $q(\cdot)$ is radial and satisfies (\ref{ncond1}).

We consider the case where $p(x)=q(x)$.
Observe that for any $k\in {\mathbb N}$, the function $b_{k,\a}(t)$ can be written in the form
$$b_{k,\a}(t)=\frac{1}{t\log t\f_{k,\a}(t)},$$
where
$$
\f_{k,\a}(t):=
\begin{cases}
\log^{\a}t,&k=1\\
\log\log^{1+\a}t,&k=2\\
\Big(\prod_{j=2}^{k-1}\log_jt\Big)\log_k^{1+\a}t,&k\ge 3.
\end{cases}
$$

Therefore, the assumption (\ref{ncond1}) is stronger than assumption (\ref{der}) in Corollary \ref{usesc}.
In particular, for an arbitrary increasing function $\f$ on $(0,\infty)$ such that
$$\lim_{t\to \infty}\frac{\f(t)}{\log\log t}=0,$$
the exponent $p(\cdot)\in {\mathcal P}$ from Example \ref{exx1} cannot be handled, in general, by means of Theorem \ref{nekth}.

\end{document}